\definecolor{MyDarkBlue}{cmyk}{0.8,0.3,0.8,0.4}
\definecolor{yellow}{rgb}{0.99,0.99,0.70}
\definecolor{white}{rgb}{1.0,1.0,1.0}
\definecolor{black}{rgb}{0.00,0.00,0.00}
\newcommand{\red}{\color{red}}
\numberwithin{equation}{section}
\newcommand{\be}{\begin{eqnarray}}
\newcommand{\ee}{\end{eqnarray}}
\newcommand{\ce}{\begin{eqnarray*}}
\newcommand{\de}{\end{eqnarray*}}
\newtheorem{theorem}{Theorem}[section]
\newtheorem{lemma}[theorem]{Lemma}
\newtheorem{remark}[theorem]{Remark}
\newtheorem{definition}[theorem]{Definition}
\newtheorem{proposition}[theorem]{Proposition}
\newtheorem{Examples}[theorem]{Example}
\newtheorem{corollary}[theorem]{Corollary}
\def\var{{\mathrm{var}}}
\def\eps{\varepsilon}
\def\e{\mathrm{e}}
\def\p{\partial}
\def\[{{\Big[}}
\def\]{{\Big]}}
\def\<{{\langle}}
\def\>{{\rangle}}
\def\({{\big(}}
\def\){{\big)}}
\def\sgn{\mbox{\rm sgn}}
\def\dif{{\mathord{{\rm d}}}}
\def\min{{\mathord{{\rm min}}}}
\def\bb2{{\boldsymbol{2}}}
\def\no{\nonumber}
\def\={&\!\!=\!\!&}
\def\bC{{\mathbf C}}
\def\cA{{\mathcal A}}
\def\cB{{\mathcal B}}
\def\cD{{\mathcal D}}
\def\cF{{\mathcal F}}
\def\cL{{\mathcal L}}
\def\cM{{\mathcal M}}
\def\cP{{\mathcal P}}
\def\cR{{\mathcal R}}
\def\cW{{\mathcal W}}
\def\mE{{\mathbb E}}
\def\mI{{\mathbb I}}
\def\mL{{\mathbb L}}
\def\mN{{\mathbb N}}
\def\mP{{\mathbb P}}
\def\mR{{\mathbb R}}
\def\bP{{\mathbf P}}
\def\bD{{\mathbf D}}
\def\b1{{\mathbbm 1}}
\def\sB{{\mathscr B}}
\def\sI{{\mathscr I}}
\def\sJ{{\mathscr J}}
\def\sL{{\mathscr L}}
\def\sS{{\mathscr S}}
\def\geq{\geqslant}
\def\leq{\leqslant}
\def\ge{\geqslant}
\def\le{\leqslant}
\def\supp{{\mathrm{supp}}}
\def\var{{\mathrm{var}}}
\def\eps{\varepsilon}
\def\e{\mathrm{e}}
\def\p{\partial}
\def\[{{\Big[}}
\def\]{{\Big]}}
\def\<{{\langle}}
\def\>{{\rangle}}
\def\sgn{\mbox{\rm sgn}}
\def\dif{{\mathord{{\rm d}}}}
\def\min{{\mathord{{\rm min}}}}
\def\no{\nonumber}
\def\={&\!\!=\!\!&}
\def\bt{\begin{theorem}}
\def\et{\end{theorem}}
\def\bl{\begin{lemma}}
\def\el{\end{lemma}}
\def\br{\begin{remark}}
\def\er{\end{remark}}
\def\bd{\begin{definition}}
\def\ed{\end{definition}}
\def\bp{\begin{proposition}}
\def\ep{\end{proposition}}
\def\bc{\begin{corollary}}
\def\ec{\end{corollary}}
\def\geq{\geqslant}
\def\leq{\leqslant}
\def\ge{\geqslant}
\def\le{\leqslant}
 \def\R{\mathbb R}
 \def\R{\mathbb R}    
\def\N{\mathbb N}  
\def\<{\langle} \def\>{\rangle}
\def\x{{\mathbf x}}
\def\bb2{{\boldsymbol{2}}}
\def\bbb1{\boldsymbol{1}}
\def\no{\nonumber}
\def\={&\!\!=\!\!&}
\title[Averaging principle]
{Averaging principle for SDEs with singular drifts
driven by $\alpha$-stable processes}
\author{Mengyu Cheng, Zimo Hao and Xicheng Zhang}
\address{Mengyu Cheng:
School of Mathematics and Statistics,
Beijing Institute of Technology, Bejing 100081, China}
\email{mengyu.cheng@hotmail.com; mcheng@bit.edu.cn}
\address{Zimo Hao:
Fakult\"at f\"ur Mathematik, Universit\"at Bielefeld,
33615, Bielefeld, Germany}
\email{zhao@math.uni-bielefeld.de}
\address{Xicheng Zhang:
School of Mathematics and Statistics, Beijing Institute of Technology, Beijing 100081, China}
\email{XichengZhang@gmail.com}
\thanks{
The first author was supported by NNSFC grant of China (No. 12301223) and
the China Postdoctoral Science Foundation (Nos. 2023M740260, GZB20230937 and 2024T171119).
The second and third authors were supported by National Key R\&D program of China
(No. 2023YFA1010103) and NNSFC grant of China (No. 12131019)
and the DFG through the CRC 1283 ``Taming uncertainty and profiting from randomness
and low regularity in analysis, stochastics and their applications''.}
\keywords{Stochastic differential equation;
Averaging principle; Schauder's estimate; Nonlocal PDE; Stable process}
\subjclass[2020]{60H10; 34C29}
\begin{document}

\begin{abstract}
  In this paper, we investigate the convergence rate of the averaging principle
  for stochastic differential equations (SDEs) with $\beta$-H\"older drift
  driven by $\alpha$-stable processes.
  More specifically, we first derive the Schauder estimate
  for nonlocal partial differential equations (PDEs) associated with
  the aforementioned SDEs, within the framework of Besov-H\"older spaces.
  Then we consider the case where $(\alpha,\beta)\in(0,2)\times(1-\tfrac{\alpha}{2},1)$.
  Using the Schauder estimate,
  we establish the strong convergence rate for the averaging principle.
  In particular, under suitable conditions we obtain the optimal rate of strong convergence when $(\alpha,\beta)\in(\tfrac{2}{3},1]\times(2-\tfrac{3\alpha}{2},1)
\cup(1,2)\times(\tfrac{\alpha}{2},1)$.
  Furthermore, when $(\alpha,\beta)\in(0,1]\times(1-\alpha,1-\tfrac{\alpha}{2}]
  \cup(1,2)\times(\tfrac{1-\alpha}{2},1-\tfrac{\alpha}{2}]$,
  we show the convergence of the martingale solutions of original systems to
  that of the averaged equation.
  When $\alpha\in(1,2)$, the drift can be a distribution.

\bigskip


\end{abstract}

\maketitle \rm


\section{Introduction}

Consider the perturbation problem of ODE in the standard form:
\begin{equation}\label{Ineq01}
\dot{y} = \varepsilon f(t,y), \quad y(0)=y_0 \in \mathbb{R}^d,
\end{equation}
where $f: \mathbb{R} \times \mathbb{R}^d \rightarrow \mathbb{R}^d$ is a measurable function and
the small parameter $0 < \varepsilon \ll 1$ characterizes the magnitude of the perturbation.
It is worth noting that the evolution of \eqref{Ineq01} is negligible
on every finite time interval $[0,T]$,
but it may become significant on time intervals of the form $[0,\tfrac{T}{\varepsilon}]$.
The idea of averaging principle is to find the following time-independent system,
that is called averaged system,
for describing the evolution of \eqref{Ineq01} over a long time interval $[0,T\varepsilon^{-1}]$:
\begin{equation}\label{Ineq0318}
\dot{\bar{y}} = \varepsilon \bar{f}(\bar{y}), \quad \bar{y}(0) = y_0 \in \mathbb{R}^d,
\end{equation}
where
\begin{equation}\label{0318:01}
\bar{f}(y) := \lim_{T \rightarrow \infty} \frac{1}{T} \int_0^{T} f(s,y) \, \mathrm{d}s,
\end{equation}
and the limit in \eqref{0318:01} is taken uniformly on any bounded subset of $\mathbb{R}^d$.
Such an $f$ is called a KBM-{\it vector field} (KBM stands for Krylov, Bogolyubov and Mitropolsky);
see e.g. \cite{SVM07}.

The averaging principle, initially proposed by Krylov,
Bogolyubov and Mitropolskii \cite{BM1961}, reformulated by Hale \cite{Hale1963},
further developed by Volosov \cite{Volo1962},
and presented in a geometric form by Arnold \cite{Arnold12},
has played significant role in the field of multi-scale systems and perturbation theory.
Astrom \cite{Astrom83, Astrom84} later applied
this method to analyze the stability of deterministic adaptive systems.
Averaging principle proves invaluable in evaluating the stability of adaptive systems,
particularly in the presence of unmodeled dynamics,
and shedding light on mechanisms of instability. Beyond its application in stability problems,
the averaging principle serves as a powerful approximation method that replace
a system of nonautonomous differential equations
by an autonomous system; see \cite{AKN06, FBS1986, SB89} for example.
Therefore, the averaging principle is productive and widely used in mathematical physics.

Real-world problems are frequently influenced by noise perturbations arising from
surrounding environments or intrinsic uncertainties.
Consequently, more realistic models should often take the fluctuation
or noise into consideration.
Following Khasmiskii's work \cite{Khas1968},
the averaging principle has also been extensively studied for SDEs
(see e.g. \cite{Vere90, MSV91, BK04, KY04, FW06, XDX12, LRSX20, HL20, SXX22, CLR23b})
and SPDEs (see e.g. \cite{Cerr09, CF09, WR2012, DW2014, DSXZ18, CL23, CLR23a, RXY23}).

Interestingly, noise can sometimes introduce
beneficial effects into the system. It is important to note
that \eqref{Ineq01} may become ill-posed when $b$ is only
H\"older continuous. However, the system can present well-posed after the introduction of noise.
Based on the aforementioned motivations,
in this paper we consider the following singular SDEs driven by $\alpha$-stable process:
\begin{equation}\label{0318:02}
\mathrm{d}Y_t^\varepsilon = \varepsilon b(t,Y_t^\varepsilon) \, \mathrm{d}t
+ \varepsilon^{\frac{1}{\alpha}} \sigma(t,Y_t^\varepsilon) \, \mathrm{d}L_t^{(\alpha)},
\end{equation}
where $b: [0,T] \times \mathbb{R}^d \rightarrow \mathbb{R}^d$
belongs to some H\"older space $\bC^\beta$
(see Section \ref{BS} for definition),
$\{L_t^{(\alpha)}, t \geq 0\}$ is an $\alpha$-stable process,
and $\sigma: [0,T] \times \mathbb{R}^d \rightarrow \mathbb{R}^d \otimes \mathbb{R}^d$
is bounded and nondegenerate.
In contrast to the aforementioned works on SDEs,
our system \eqref{0318:02} has H\"older continuous drift $b$.

When $(\alpha, \beta) \in (0,2) \times \left(1 - \frac{\alpha}{2}, 1\right) =: \mathcal{A}_1$,
for all $0 < \varepsilon \ll 1$ and $x \in \mathbb{R}^d$
there exists a unique strong solution $\{X_t^\varepsilon, t \geq 0\}$ to \eqref{0318:02} with $X_0^\varepsilon = x$;
see e.g. \cite{CZZ22}.
If
$$
(\alpha, \beta) \in (0,1] \times \left(1 - \alpha, 1 - \tfrac{\alpha}{2}\right]
\cup (1,2) \times \left(\tfrac{1 - \alpha}{2}, 1 - \tfrac{\alpha}{2}\right] =: \mathcal{A}_2
$$
then for any $0 < \varepsilon \ll 1$ and $x \in \mathbb{R}^d$, there exists
a unique martingale solution $\{\mathbb{P}_x^\varepsilon(t), t \geq 0\}$ to \eqref{0318:02}
(as defined in Definition \ref{MSolution}); see \cite{LZ22, WH23} for example.
Moreover, it has been shown in \cite{TTW74} that
for any $(\alpha, \beta) \in (0,1) \times (0, 1 - \alpha)$, there exists a
bounded $\beta$-H\"older continuous $b$ such that both strong and weak uniqueness
for \eqref{0318:02} fails.
Therefore, we consider the strong convergence (respectively, weak convergence)
for the averaging principle to \eqref{0318:02} when $(\alpha, \beta) \in \mathcal{A}_1$
(respectively, $(\alpha, \beta) \in \mathcal{A}_2$).

To focus on the evolution of \eqref{0318:02} over times of order $\frac{1}{\varepsilon}$,
we rescale the solution $Y_t^\varepsilon$ by defining $\widetilde{Y}_t^\varepsilon := Y_{t/\varepsilon}^\varepsilon$
for all $t \in [0,T]$ with $0 < \varepsilon \ll 1$. Subsequently, it becomes evident that
$\widetilde{Y}_t^\varepsilon$ shares the same distribution as $X_t^\varepsilon$, which solves the SDE:
\begin{align}\label{Main01}
\mathrm{d}X^\varepsilon_t = b\left(\frac{t}{\varepsilon}, X^\varepsilon_t\right) \, \mathrm{d}t
+ \sigma\left(\frac{t}{\varepsilon}, X^\varepsilon_t\right) \, \mathrm{d}L_t^{(\alpha)},
\quad 0 < \varepsilon \ll 1.
\end{align}
Therefore, the primary objective of this paper
is to show the convergence of $X_t^\varepsilon$ to
$\bar{X}_t$ in both the strong and weak sense as
the time scale $\varepsilon$ goes to zero, while also obtain the convergence rate,
which plays a crucial role in constructing efficient numerical schemes
and homogenization in PDE (see e.g. \cite{PV01, PV03, KY04, WR2012}).
Here $\bar{X}_t$ solves the so-called averaged system:
\begin{align}\label{0318:03}
\mathrm{d}\bar{X}_t = \bar{b}(\bar{X}_t) \, \mathrm{d}t + \bar{\sigma}(\bar{X}_t) \, \mathrm{d}L_t^{(\alpha)},
\end{align}
where $\bar{b}$ and $\bar{\sigma}$ can be formally defined by
$$
\bar{b}(x) := \lim_{T \rightarrow \infty} \frac{1}{T} \int_0^T b(s,x) \, \mathrm{d}s,
\quad
\lim_{T \rightarrow \infty} \frac{1}{T} \int_0^T \left|\sigma(s,x) - \bar{\sigma}(x)\right|^2 \, \mathrm{d}s = 0,
\quad \forall x \in \mathbb{R}^d.
$$
More precisely, in the case where
$(\alpha, \beta) \in \mathcal{A}_1$,
we obtain the strong convergence rate; see Theorem \ref{Th1}.
In particular, we have the optimal strong convergence rate provided that
$$
(\alpha, \beta) \in \left(\tfrac{2}{3}, 1\right] \times
\left(2 - \tfrac{3\alpha}{2}, 1\right)
\cup (1,2) \times \left(\tfrac{\alpha}{2}, 1\right) =: \mathcal{A}_0;
$$
see Example \ref{Ex1} and Remark \ref{Rk1} (2)-(i).
We finally point out that our convergence rate is faster than $\frac{1}{3}-$ obtained in \cite{CHR24};
see Remark \ref{Rk1} for details.

It is worth mentioning that our result presents a unified framework applicable to the following
slow-fast systems:
\begin{equation}\label{0820:01}
\dif X_t^\eps=f(X_t^\eps,Y_t^\eps)\dif t+\sigma(X_t^\eps)\dif L_t^{(\alpha)},
\end{equation}
where $f:\R^n\times\R^m\rightarrow\R^n$ belongs to $L^\infty(\R^m;\bC^\beta(\R^n))$,
the fast process $Y_t^\eps:=Y_{t/\eps}$, $t\geq0$,
which is independent of $\{L_t^{(\alpha)},t\geq0\}$
and satisfies a kind of strong law of large numbers. Roughly speaking,
if there is an $\bar{f}:\R^n\rightarrow\R^n$ such that for all $x\in\R^n$,
\begin{equation*}\label{0820:02}
\lim_{T\rightarrow\infty}\frac{1}{T}\int_0^Tf(x,Y_t)\dif t
=\bar{f}(x) \quad \mP-{\rm a.s.},
\end{equation*}
then $X_t^\eps$ strongly converges to $\bar{X}_t$, where $\bar{X}_t$ is the solution to
\begin{equation*}
\dif \bar{X}_t=\bar{f}(\bar{X}_t)\dif t+\sigma(\bar{X}_t)\dif L_t^{(\alpha)};
\end{equation*}
see Theorem \ref{thSF} for more details. To illustrate this result,
we provide Example \ref{Ex3} below. In this example, we assume that
$f\in L^\infty(\R^m_y;\bC_b^{\beta_1})\cap L^\infty(\R^n_x;\bC_b^{\beta_2})$
with $(\beta_1,\beta_2)\in(1-\tfrac{\alpha}{2})\times(0,1]$ and
$\{Y_t,t\geq0\}$ is the solution to
\begin{align*}\label{0820:03}
\dif Y_t=B(Y_t)\dif t+\dif W_t,
\end{align*}
where $\{W_t,t\geq0\}$ is an $\R^m$-valued Brownian motion and
$B:\R^m\rightarrow\R^m$ satisfies
$$
\lim_{|y|\rightarrow\infty}\langle B(y),y\rangle=-\infty.
$$
By Theorem \ref{thSF}, we show that for any $\kappa>0$,
\begin{equation*}
\mE\left(\sup_{t\in[0,T]}|X_t^\eps-\bar{X}_t|\right)
\leq C\eps^{\frac{(1\wedge\alpha)(\frac12-\kappa)}{(1\wedge\alpha)+\delta_1}},
\end{equation*}
where $\delta_1:=[(\tfrac{\alpha}{2}-\beta_1)\vee(2-\tfrac{3\alpha}{2}-\beta_1)+\iota]\vee0$
for any $\iota>0$.

Recall that there are few works on the averaging principle for slow-fast SDEs
with non-smooth coefficients; see \cite{Vere90} for weak convergence without rate
and \cite{RX21, Xie23} for strong/weak convergence rate.
In \cite{RX21}, they obtain the strong convergence rate $\tfrac{\beta_1 \wedge 1}{2}$
for SDEs driven by Brownian motion.
Regarding the case where the slow equation driven by an $\alpha$-stable process,
when $\alpha \in [1,2)$, the strong convergence rate $\frac{\beta_1}{\alpha} \wedge \frac{1}{2}$
is obtained in \cite{Xie23},
and the optimal rate can be reached if $\beta_1 \in [\alpha/2,1)$.
However, in Example \ref{Ex3} we can consider the case where $\alpha \in (0,2)$
and obtain the strong convergence rate $\tfrac12-$ provided $(\alpha, \beta) \in \mathcal{A}_0$.
Compared with \cite{RX21, Xie23}, the rate obtained in Example \ref{Ex3}
has a lower bound $\tfrac{\alpha}{2}\wedge\tfrac{1}{2\alpha}$,
which is independent of $\beta_1$.
This observation is different from previous results.
Moreover, as far as we know, there is no existing work on averaging
principle for SDEs driven by $\alpha$-stable process when $\alpha \in (0,1)$.
We also remark that the results in \cite{Vere90, RX21, Xie23}
can deal with the case where the slow process $X_t^\eps$
influences the dynamic of $Y_t^\eps$.

By \cite[Example 2.2]{SXX22} and \cite[Example 1]{LiuD10}, it is observed that
the optimal strong convergence rate for \eqref{0820:01} is $1-\frac{1}{\alpha'}$
when the fast process $Y_t^\eps$ is the solution to SDEs driven by $\alpha'$-stable
process with $\alpha'\in(1,2)$.
In \cite{Xie23}, assuming $\alpha'=2$,
it is shown that the optimal strong convergence rate is $\frac{1}{2}$.
As mentioned in Example \ref{Ex1} below, the optimal rate is $1$ in the case
where the fast process $Y_t^\eps:=\tfrac{t}{\eps}$.
This result is natural since $Y_t^\eps$ is not effected by noise here.
Therefore, our results further demonstrate the effect of the fast process on the convergence rate.

The existing results regarding the strong convergence (with or without rate)
of averaging principle are typically founded on
the classical Khasminskii's time discretization (see e.g. \cite{CHR24}) or
the Poisson equations (see e.g. \cite{RX21, Xie23}).
The Poisson equations are associated with the generator of the fast process,
which requires the diffusion term of fast equation is nondegenerate.
However, the fast equation here covers the case where $\dif Y_t^\eps=\frac{1}{\eps}\dif t$.
To address this question, we use the ODE and the technique of mollification
to capture the fluctuations for the difference between $b(t/\eps,X_t^\eps)$
and $\bar{b}(X_t^\eps)$ (see Section \ref{SCR} for details),
and the strong convergence rate in the averaging principle follows
along with Zvonkin's transform.
To the best of our knowledge, the approach of using ODE
is new in the averaging principle.

Note that the Krylov-type estimate, following from the Schauder estimate,
plays a crucial role in our proof.
Precisely, we first consider the following nonlocal PDEs:
\begin{equation}\label{0412:01}
\partial_tu=\sL_\sigma u+b\cdot\nabla u+f,
\quad u(0)=0,
\end{equation}
where $f\in L_T^\infty(\bC^\vartheta):=L^\infty([0,T];\bC^\vartheta)$ and
for $\phi\in \sS(\R^d)$,
\begin{equation}\label{0414:03}
\sL_\sigma \phi(x):=\int_{\R^d}\left(\phi(x+\sigma(t,x)z)-\phi(x)
-\sigma(t,x)z\b1_{\{\alpha\geq1\}}\b1_{\{|z|\leq1\}}\cdot\nabla\phi(x)\right)\nu(\dif z)
\end{equation}
with a rotationally invariant and symmetric L\'evy measure
$$
\nu(\dif z)=\frac{c_{d,\alpha}}{|z|^{d+\alpha}}\dif z.
$$
Here $c_{d,\alpha}$ is a constant depending on $d,\alpha$.
Assume that $\beta\in(1-\alpha,1)$ when $\alpha\in(0,1]$,
and $\beta\in(\frac{1-\alpha}{2},1)$ when $\alpha\in(1,2)$.
Then we show that the solution $u$ to \eqref{0412:01} satisfies that
\begin{align}\label{0412:02}
\|u\|_{L_T^\infty(B_{\infty,\infty}^{\vartheta+\alpha})}
\leq C\|f\|_{L_T^\infty(\bC^\vartheta)},
\end{align}
where $\vartheta\in((1-\alpha-\beta)\vee(-1),\beta]$ (see Theorem \ref{SchE24}),
and
$B_{p,q}^s$ is the Besov space (see Definition \ref{defBS} for details).
Thanks to \eqref{0412:02}, we establish the following Krylov-type estimate
for any $f\in L^\infty_T(\bC^{\kappa}), \kappa>0$ and $p\geq2$:
\begin{align}\label{0324:01}
\mE\left[\sup_{t\in[0,T]}\left|\int_0^t f(s,X_s^\eps)\dif s\right|^p\right]\leq
C\|f\|_{L^\infty_T(\bC^{-\delta})}^p,
\end{align}
where $\delta\in(0,\alpha/2\wedge(\alpha+\beta-1))\cap[-\beta,\infty)$
(see Lemma \ref{KryEq} for details).

Recall that when $\vartheta=\beta$, the Schauder estimate is established
in \cite{SX23} for $\alpha\in(0,1]$ (also discussed in \cite{CZZ22}).
The case involving $\alpha>1$ and $\beta<0$ is addressed in \cite{LZ22}
(with further insights in \cite{WH23} for more general setting of L\'evy measure).
Additionally, for Schauder's estimates pertaining to a broader spectrum of
L\'evy measures $\nu$, it is referred to \cite{CMP20, Zhao21, Kuhn22, HWW23}.
But to the best of our knowledge, the Schauder estimate presented in Theorem \ref{SchE24}
is new since we can deal with the case where $\vartheta\leq\beta$.
Hence it is interesting on its own rights.
In fact, when $\sigma\equiv c\mI$, we can show that \eqref{SchM0124} holds
for all $\vartheta\in(1-\alpha-\beta,\beta]$ (see Theorem \ref{SchE24}).
In this case, $\vartheta$ can be less than $-1$ and close to $-2$
as $\alpha$ goes to $2$ and $\beta$ approaches to $1$.
We believe that it is also valuable for addressing numerous other questions.


Given $(\alpha,\beta)\in\cA_2$, it is important to note that in this case,
$b$ can be treated as a distribution.
Singular SDEs with distributional drifts arise from many stochastic models;
for further details, refer to \cite{KP22}.
Despite considerable advances in the averaging principle for SDEs, it seems that
there is no work on SDEs with distributional drifts.
Consequently, we also aim to prove the averaging principle
for SDEs with distributional drifts in this paper.
A difficulty that we encounter is how to address the solution
in the context of a singular drift.
In this regard, we adopt the martingale solution approach
as defined by Either and Kurtz \cite{EK86}
(see Definition \ref{MSolution} for details).
Then with the help of mollification method and Zvonkin's transform
we obtain that the martingale solution
of \eqref{Main01} converges weakly to that of the averaged equation \eqref{0318:03}.


To provide a clear summary of our results on the strong and weak
convergence of the averaging principle to the SDE \eqref{Main01} driven by an
$\alpha$-stable process with a $\beta$-H\"older drift,
we give the range of $(\alpha,\beta)$ in
Figure $1$ below.
When $(\alpha,\beta)\in\cA_1$, we obtain the strong convergence of the
averaging principle.
In particular, we have the optimal strong convergence rate provided
$(\alpha,\beta)\in\cA_0\subset\cA_1$.
Moreover, if $(\alpha,\beta)\in\cA_2$, we establish the weak
convergence of the averaging principle.
\begin{figure}[H]
\centering
\begin{tikzpicture}[scale=2.85]
\draw [step=0.05cm, gray, very thin, dashed] (0,-0.8) grid (2.4,1.2);
\draw [thin, ->] (0,0)--(2.5,0) node[anchor=north west] {$\alpha$};
\draw [thin, ->] (0,-0.8)--(0,1.3) node[anchor=south east] {$\beta$};
\foreach \x/\xtext in {0.5/\tfrac{1}{2}, 1, 1.5/\tfrac{3}{2}, 2}
\draw (\x cm, 1pt) -- (\x cm, 0pt) node[anchor=north] {$\xtext$};
\foreach \y/\ytext in {-0.5/-\tfrac{1}{2}, 0, 0.5/\tfrac{1}{2}, 1}
\draw (1pt, \y cm) -- (0pt, \y cm) node[anchor=east] {$\ytext$};
\draw [thick, fill=green,opacity=0.2] (0,1)--(2,1) --(2,0)--(0,1);
\node at (0.6,0.8) {$\cA_1$};
\node at (1.6,0.5) {$\cA_1$};
\draw [thick, fill=blue,opacity=0.2] (2/3,1)--(1,0.5) --(2,1)--(2/3,1);
\node at (1.2,0.8) {$\cA_0$};
\draw [thick, fill=pink, opacity=0.2] (0,1)--(1,0)--(2,-0.5) --(2,0)--(0,1);
\node at (1,0.2) {$\cA_2$};
\draw [fill] (2/3,1) circle [radius=0.005];
\node [above] at (2/3,1) {$(\tfrac{2}{3},1)$};
\draw [fill] (1,0.5) circle [radius=0.005];
\node [below] at (1,0.5) {$(1,\tfrac{1}{2})$};
\draw [fill] (2,1) circle [radius=0.005];
\node [right] at (2,1) {$(2,1)$};
\draw [fill] (2,-0.5) circle [radius=0.005];
\node [below] at (2,-0.5) {$(2,-\tfrac{1}{2})$};
\end{tikzpicture}
\caption{The domain of $\alpha$ and $\beta$.}
\end{figure}
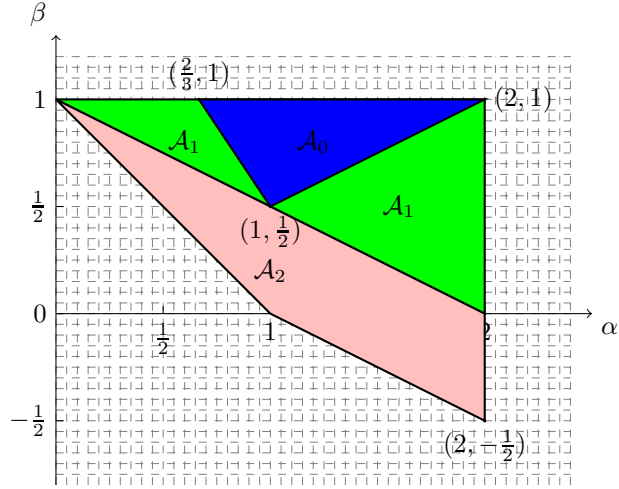

Now we conclude this section by summarizing the structure of the paper.
In Section \ref{MR}, we state our main results.
In Section \ref{Pre}, we recall some definitions and facts
concerning Besov space and $\alpha$-stable process.
In Section \ref{SE}, we prove the Schauder estimate for the
nonlocal PDE \eqref{Pareq24} and the Krylov-type estimate.
Finally, we prove the strong and weak convergence for
the averaging principle in Section \ref{AP}.

~~\\

{\bf Notations.}

Throughout this paper, we use $:=$ to mean a way of definition.
Define $a\vee b:=\max\{a,b\}$ and $a\wedge b:=\min\{a,b\}$ for $a,b\in\R$.
Write $\nabla:=\left(\frac{\partial}{\partial x_1},...,\frac{\partial}{\partial x_d}\right)$
and $\Delta:=\sum_{k=1}^d\frac{\partial^2}{\partial x_k^2}$ on $\R^d$.
We use the letter $C$ to denote some constant,
which may change from line to line.
Let $C(\theta)$ indicate that $C$ depends on
$\theta$.
By $A\lesssim B$ we mean that $A\leq CB$ for some constant $C\geq1$,
and the notation $A\lesssim_\theta B$ indicates that $A\leq C(\theta)B$.
Let $\supp[\varphi]:=\overline{\{x\in\R^d:\varphi(x)\neq0\}}$
for $\varphi:\R^d\rightarrow\R$.
For any $A\subset\R^d$ define the indicator function $\b1_{A}$,
which means that $\b1_{A}(x)=1$ for $x\in A$ and $\b1_{A}(x)=0$ otherwise.
Let $\mI$ be the identity mapping on $\R^d$. For two operators $T_1$ and $T_2$, we use $[T_1,T_2]:=T_1T_2-T_2T_1$ to denote the commutator.

\section{Statement of main results}\label{MR}

In this section, we formulate our main results.
As mentioned in the introduction, the Krylov-type estimate plays a crucial role
in the proof of the averaging principle, with its derivation stemming from Schauder's estimate.
In the sequel, we fix a time $T>0$. We first consider the following nonlocal PDEs:
\begin{equation}\label{Pareq24}
\partial_tu_\lambda=\sL_\sigma u_\lambda-\lambda u_\lambda+g\cdot\nabla u_\lambda+f,
\quad u_\lambda(0)=0,
\end{equation}
where $\lambda\geq0$, $f\in L_T^\infty(\bC^\vartheta)$, $g\in (L_T^\infty(\bC^\beta))^d$ and
$\sL_\sigma$ is as in \eqref{0414:03}.
Here $\vartheta,\beta$ are specified below, and $\sigma$ satisfies the following condition:
\begin{itemize}
  \item[{\bf(H$_{\sigma}$)}]
  There exists $\Lambda_1>1$ such that for all $t\geq0$, $x,\xi\in\R^d$,
  \begin{equation*}
    \Lambda_1^{-1}|\xi|^2\leq |\sigma(t,x)\xi|^2\leq\Lambda_1|\xi|^2,\quad
    |\nabla_x\sigma(t,x)|\leq\Lambda_1.
  \end{equation*}
\end{itemize}
In the following we  use the following parameter sets
$$
\Theta:=(\alpha,d,T,\beta,\vartheta, \Lambda_1).
$$

The following theorem is our first main result,
which provides a useful Schauder's estimate for \eqref{Pareq24}.
This theorem serves as a foundational element in our subsequent analysis and proofs.
\begin{theorem}\label{SchE24}
Let $\beta\in(1-\alpha,1)$ for $\alpha\in(0,1]$
and $\beta\in(\frac{1-\alpha}{2},1)$ for $\alpha\in(1,2)$.
Let
$$
\vartheta\in((1-\alpha-\beta)\vee(-1),\beta]\mbox{ or if $\sigma=c\mI$}, \ \vartheta\in(1-\alpha-\beta,\beta].
$$
Under {\bf (H$_\sigma$)},  for each $\lambda\geq0$, $f\in L_T^\infty(\bC^\vartheta)$
and $g\in (L_T^\infty(\bC^\beta))^d$,
there exits a unique solution
$u_\lambda$ to \eqref{Pareq24} with that
\begin{align}\label{SchM0124}
\|u_\lambda\|_{L_T^\infty(B_{\infty,\infty}^{\vartheta+\eta})}
\lesssim_C(1+\lambda)^{-\frac{\alpha-\eta}{\alpha}} \|f\|_{L_T^\infty(\bC^\vartheta)},\ \forall \eta\in[0,\alpha],
\end{align}
where $C=C(\Theta,\eta,\|g\|_{\mL^\infty_T(\bC^\beta)})>0$.
\end{theorem}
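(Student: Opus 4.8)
The plan is to prove the Schauder estimate by a perturbation/freezing argument reducing \eqref{Pareq24} to the constant-coefficient equation, for which the estimate is available in the literature (this is the $\sigma = c\mI$ case stated in the theorem). First I would treat the model problem $\partial_t v - \lambda v = \sL_{c\mI} v + h$, $v(0)=0$, where $\sL_{c\mI}$ is a multiple of the fractional Laplacian $-(-\Delta)^{\alpha/2}$. Using the representation $v(t) = \int_0^t \e^{-\lambda(t-s)} P_{c(t-s)} h(s)\,\dif s$ with $P_t$ the $\alpha$-stable semigroup, together with the standard heat-kernel type bound $\|P_t\phi\|_{B^{\vartheta+\eta}_{\infty,\infty}} \lesssim t^{-\eta/\alpha}\|\phi\|_{B^\vartheta_{\infty,\infty}}$ and the elementary integral $\int_0^\infty \e^{-\lambda r} r^{-\eta/\alpha}\,\dif r \lesssim (1+\lambda)^{-(\alpha-\eta)/\alpha}$ (splitting at $r=1$ when $\lambda$ is small), one gets \eqref{SchM0124} for $\sigma = c\mI$ and all $\vartheta \in (1-\alpha-\beta,\beta]$; the lower bound on $\vartheta$ is exactly what guarantees that the drift term $g\cdot\nabla u_\lambda$, which lives in $\bC^{\vartheta+\alpha-1}\cdot\bC^\beta$, makes sense as a distribution of regularity $>\vartheta$ after using the paraproduct/Bony decomposition, so that it can be absorbed as a source term. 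This forced regularity threshold $\vartheta+\alpha-1+\beta > \vartheta$, i.e. $\alpha+\beta>1$, is the structural reason for the hypothesis on $\beta$.

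Second, for general $\sigma$ I would localize: fix a point $x_0$, freeze the coefficient to $\sigma(t,x_0)$, and write $\sL_\sigma = \sL_{\sigma(t,x_0)} + (\sL_\sigma - \sL_{\sigma(t,x_0)})$. Because of {\bf (H$_\sigma$)}, $\sigma(t,x_0)$ is uniformly elliptic, and after a linear change of variables $\sL_{\sigma(t,x_0)}$ becomes the pure fractional Laplacian, so the frozen equation falls under the model case. The commutator/error term $(\sL_\sigma - \sL_{\sigma(t,x_0)})u_\lambda$ near $x_0$ is controlled using $|\nabla_x\sigma|\le\Lambda_1$ (Lipschitz dependence) times a negative power of the localization radius times $\|u_\lambda\|_{B^{\vartheta+\eta}_{\infty,\infty}}$ with $\eta<\alpha$; here I expect to need a Littlewood–Paley decomposition of $u_\lambda$ and careful bookkeeping of how $\sL_\sigma$ acts on each dyadic block, plus the commutator estimate between $\sL_\sigma$ and the frequency projections. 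Summing the localized estimates over a partition of unity, choosing the radius small enough, and absorbing the resulting $\osc(\sigma)$-small term into the left-hand side yields the a priori bound \eqref{SchM0124} for smooth data.

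Third, I would pass from the a priori estimate to existence and uniqueness. Uniqueness is immediate from the a priori bound applied to the difference (with $f=0$). For existence, I would mollify $\sigma$, $g$, $f$ to get smooth coefficients, solve the regularized equations (classical theory, e.g. via the Banach fixed point on a small time interval using the smoothing of the semigroup, then iterate), obtain uniform bounds from \eqref{SchM0124}, and extract a limit; the limit solves \eqref{Pareq24} in the distributional/mild sense, and by the uniform bound it inherits \eqref{SchM0124}. The case $\eta\in\{0,\alpha\}$ and intermediate $\eta$ follow by interpolation once the endpoints are established, or directly from the semigroup estimate with the corresponding power.

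The main obstacle I anticipate is the variable-coefficient commutator term $(\sL_\sigma - \sL_{\sigma(\cdot,x_0)})u_\lambda$: unlike the local (differential-operator) case, $\sL_\sigma$ is genuinely nonlocal, so "freezing the coefficient" does not localize cleanly — the long-range part of the jump kernel couples distant scales, and one must separately estimate the near-diagonal piece (handled by the Lipschitz bound on $\sigma$ and smallness of the radius) and the far piece (handled by the decay $\nu(\dif z)\sim|z|^{-d-\alpha}$, which costs regularity and is exactly why $\eta$ must be strictly less than $\alpha$ and why one loses a bit when $\vartheta$ is very negative unless $\sigma\equiv c\mI$). Getting the paraproduct estimates for $g\cdot\nabla u_\lambda$ to close in the Besov scale $B^\vartheta_{\infty,\infty}$ down to the sharp threshold $\vartheta > 1-\alpha-\beta$ (rather than the easier $\vartheta > \beta$ range) is the other delicate point, and is the genuinely new content of the theorem.
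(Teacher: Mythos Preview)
Your approach is genuinely different from the paper's, and it contains a real gap in the supercritical regime $\alpha\in(0,1]$.

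\textbf{The gap.} Your claim that ``$g\cdot\nabla u_\lambda$ \ldots\ makes sense as a distribution of regularity $>\vartheta$'' because ``$\vartheta+\alpha-1+\beta>\vartheta$'' misreads the paraproduct estimate. The condition $(\vartheta+\alpha-1)+\beta>0$ only ensures the product $\bC^{\vartheta+\alpha-1}\cdot\bC^\beta$ is \emph{defined}; it then lands in $\bC^{\min(\beta,\vartheta+\alpha-1)}$, not in $\bC^{\vartheta+\alpha-1+\beta}$. For $\alpha<1$ this gives $g\cdot\nabla u_\lambda\in\bC^{\vartheta+\alpha-1}\subsetneq\bC^\vartheta$, so treating the drift as a source term in Duhamel does \emph{not} close: the Schauder gain is $\alpha$, the gradient costs $1$, and multiplication by $g\in\bC^\beta$ gains nothing. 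Your fixed-point would map $\bC^{\vartheta+\alpha}$ into $\bC^{\vartheta+2\alpha-1}$, which is strictly worse when $\alpha<1$. The threshold $\alpha+\beta>1$ is necessary but not sufficient for a naive source-term argument.

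\textbf{What the paper does instead.} The paper works directly in frequency space. Applying $\cR_j$ to the equation and evaluating at the point $x_{t,j}$ where $|\cR_ju(t,\cdot)|$ attains its maximum, two things happen: (i) a frequency-localized maximum principle (their Lemma~\ref{LMP}) gives $-\sL_\sigma\cR_ju(t,x_{t,j})\geq C_12^{\alpha j}\|\cR_ju(t)\|_\infty$, yielding a differential inequality for $\|\cR_ju(t)\|_\infty$; and (ii) crucially, $\nabla\cR_ju(t,x_{t,j})=0$, so
\[
\cR_j(g\cdot\nabla u)(t,x_{t,j})=[\cR_j,g]\nabla u(t,x_{t,j}),
\]
and the commutator estimate \eqref{ComEs} then bounds this by $2^{-(\vartheta\vee0)j}\|g\|_{\bC^\beta}\|u\|_{B^{1-\beta+(\vartheta\vee0)+\epsilon}_{\infty,\infty}}$. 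This is a gain of $\beta$ derivatives over the naive estimate, and is exactly what allows the argument to close under the hypothesis $\alpha+\beta>1$ rather than $\alpha>1$. A Volterra-type Gronwall then finishes.

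\textbf{On variable $\sigma$.} The paper also avoids freezing entirely: the maximum principle Lemma~\ref{LMP} holds for the variable-coefficient $\sL_\sigma$ under {\bf(H$_\sigma$)}, and the discrepancy is pushed into the commutator $[\cR_j,\sL_\sigma]$, estimated in Lemma~\ref{Lem0329}. This commutator is the source of the extra restriction $\vartheta>-1$ in the variable-$\sigma$ case (the lemma only works for $\kappa\in[-1,0)$), which disappears when $\sigma=c\mI$ since then $[\cR_j,\sL_\sigma]=0$. Your freezing-plus-partition-of-unity scheme could in principle be made to work for the $\sL_\sigma$ part, but it is substantially heavier than the paper's route and still would not repair the drift issue above.
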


\begin{remark}\rm
Compared with the existing results \cite{CZZ22, LZ22, SX23},
we allow $\vartheta\leq\beta$ in Theorem \ref{SchE24}.
\end{remark}

Let $\Xi$ be the set of all mappings $\ell:\R_+\rightarrow\R_+$ satisfying that
$t\mapsto t\ell(t)$ is non-decreasing and $\lim_{t\to\infty}\ell(t)=0$.
Now we introduce the following averaged conditions about coefficients
$b$ and $\sigma$.
\begin{itemize}
  \item [{\bf (A$_b$)}] Let $b\in L^\infty(\R_+;\bC^\beta)$, where $\beta\in\mR$.
  There exist $\gamma\leq\beta$, $\bar{b}\in\bC^\beta$ and $\ell_1\in\Xi$
       such that
  \begin{align}\label{B00}
\left\|\frac{1}{T}\int_{0}^{T}b(s,\cdot)\dif s -\bar{b}(\cdot)\right\|_{\bC^\gamma}\le \ell_1(T).
\end{align}
  \item [{\bf (A$_\sigma$)}] There exist $\ell_2\in\Xi$ and $\bar{\sigma}$ satisfying
  {\bf (H$_\sigma$)} such that
  \begin{equation*}
  \left\|\frac{1}{T}\int_0^{T}\left|\sigma(s,\cdot)-\bar{\sigma}(\cdot)\right|^2\dif s\right\|_{\bC^1}
  \leq\ell_2(T).
  \end{equation*}
\end{itemize}

Let $(\alpha,\beta)\in\cA_1$.
By Theorem \ref{SchE24}, we establish the Krylov-type estimate \eqref{0324:01}.
Due to the low regularity of the coefficients, we need to use mollification techniques.
Therefore, assuming $b\in L_T^\infty(\bC_b^\infty)$, via an ODE,
we can capture the fluctuations as follows:
\begin{align}\label{0414:01}
&
\mE\left(\sup_{t\in[0,T]}\left|\int_0^t
\left(b\left(\frac{s}{\eps},X_s^\eps\right)-\bar{b}\left(X_s^\eps\right)\right)\dif s\right|^p\right)\\\nonumber
&
\lesssim_{T}\eps^p\sup_{t\in[0,T]}\left\|\int_0^{\frac{t}{\eps}}\left(b(s,\cdot)-\bar{b}(\cdot)\right)
\dif s\right\|_{\bC^\kappa}^p
\lesssim_T \left(\ell_1(T/\eps)\right)^p
\end{align}
for any $0<\eps\ll1$, $p\geq1$ and $\kappa>(1-\alpha/2)\vee\alpha/2$ (see Lemma \ref{0116Lem}).
Similarly, we have the fluctuations between $\sigma$ and $\bar{\sigma}$.
Thus we deduce the following strong convergence rate for the averaging principle
in the case where $(\alpha,\beta)\in\cA_1$.

\bt\label{Th1}
Assume that {\bf (A$_b$)} and  one of the following conditions hold:
\begin{itemize}
  \item[(i)] $\alpha\in(0,1]$, $p\geq1$  and $\sigma=c\mI$, where $c>0$;
  \item[(ii)] $\alpha\in(1,2)$, $p\in[1,\alpha)$, {\bf (H$_\sigma$)} and {\bf (A$_\sigma$)} hold.
\end{itemize}
For any $\beta\in(1-\tfrac{\alpha}{2},1)$, there exists a constant $C>0$
such that for any $\eps>0$,
\begin{align}\label{Rate:01}
\mE\left(\sup_{t\in[0,T]}|X^\eps_t-\bar{X}_t|^p\right)
\le C \left[\left(\ell_1\left(\tfrac{T}{\eps}\right)\right)^{\frac{p(1\wedge \alpha)}{(1\wedge \alpha)+\delta_1}}
+\left(\ell_2\left(\tfrac{T}{\eps}\right)\right)^{\frac{p}{2}}\right],
\end{align}
where $\delta_1:=[((1-\tfrac{\alpha}{2})\vee\tfrac{\alpha}{2}
-\gamma)\vee(2-\tfrac{3\alpha}{2}-\beta)+\iota]\vee0$ with
any $\iota>0$.
\et

We show via the following simple example that $\ell_1(\tfrac{T}{\eps})$
is the optimal strong convergence rate
when $\sigma$ is time-independent.
\begin{Examples}\rm\label{Ex1}
Consider the following SDE:
\begin{equation*}
\dif X_t^\eps=\cos\left(t/\eps\right)\dif t+\dif L_t^{(\alpha)}, \quad X_0^\eps=x\in\R.
\end{equation*}
It is obvious that the averaged equation is
\begin{equation*}
\dif \bar{X}_t=\dif L_t^{(\alpha)}, \quad \bar{X}_0=x\in\R
\end{equation*}
with $\ell_1(T)=1/T$, and for $\eps<T/\pi$,
\begin{equation*}
\sup_{t\in[0,T]}|X_t^\eps-\bar{X}_t|
=\sup_{t\in[0,T]}\left|\eps\sin(t/\eps)\right|=\eps.
\end{equation*}
\end{Examples}

\begin{remark}\rm\label{Rk1}
(1) In many examples, $\ell_1(1/\eps)\sim \eps$, such as $b$ is periodic.

(2) In order to clearly demonstrate the convergence rate,
we can assume that $\sigma$ is time-independent. Then we will have the following conclusions.
\begin{itemize}
  \item[(i)] If $b$ can be expressed in the form of $b(t,x)=b_0(t)b_1(x)$
for all $(t,x)\in\R_+\times\R^d$,
then we have $\gamma=\beta$ and $\delta_1=\left[\left(\tfrac{\alpha}{2}-\beta\right)
\vee\left(2-\tfrac{3\alpha}{2}-\beta\right)
+\iota\right]\vee0$.
Thus, we obtain the optimal strong convergence rate $\ell_1(\tfrac{T}{\eps})$ provided $(\alpha,\beta)\in\cA_0$.

\item[(ii)]
If $\gamma=0$ then the convergence rate is
  \begin{equation*}
  r_1(\alpha)=
  \begin{cases}
  \frac{2\alpha}{2+\alpha}-, & \alpha\in(0,1)\\
  \frac{2}{3}-, & \alpha=1\\
  \frac{2}{2+\alpha}-, & \alpha\in(1,2)
  \end{cases}
  \end{equation*}
where $a-:=a-\iota$ with some $\iota>0$. In this case, the convergence rate is independent with
the regularity of the drift $b$.
Specifically, when $\alpha=1$, the rate $r_1(\alpha)$ tends to the supremum
$\frac{2}{3}$ for any $\beta\in(\tfrac12,1)$.
Furthermore, as $\alpha$ goes to $2$, for any $\beta\in(0,1)$, $r_1(\alpha)$ tends to $\frac{1}{2}-$,
which is faster than $\frac{1}{3}-$ in \cite{CHR24}.

\item[(iii)] We rewrite $\delta_1$ as follows:
\begin{equation*}
\delta_1=
\begin{cases}
\left[\left(1-\frac{\alpha}{2}-\gamma\right)\vee\left(2-\frac{3\alpha}{2}-\beta\right)+\iota\right]
\vee0,
& \alpha\in(0,1],\\
\left[\left(\frac{\alpha}{2}-\gamma\right)\vee(2-\frac{3\alpha}{2}-\beta)+\iota\right]\vee0,
& \alpha\in(1,2).
\end{cases}
\end{equation*}
Consider $(\alpha,\beta)\in\cA_1\setminus\cA_0$.
Since $\gamma\leq\beta$, we obtain that $\delta_1$ reaches its minimum when $\alpha=1$.
Consequently, this implies that the rate goes to its maximum value when $\alpha=1$.
\end{itemize}

(3) The generator of $X_t^\eps,t\geq0$ is given by
\begin{equation*}
\sL_\eps:=b\left(\tfrac{t}{\eps},\cdot\right)\cdot\nabla +\sL_{\sigma(t/\eps,\cdot)},
\end{equation*}
where $\sL_{\sigma(t/\eps,\cdot)}$ is defined as in \eqref{0414:03}
but with $\sigma(t,\cdot)$ replaced by $\sigma(t/\eps,\cdot)$.
The reason why $r_1(\alpha)$ reaches its maximum
at $\alpha=1$ for a given $\beta$ is that we use the fluctuations \eqref{0414:01}
and employ the technique of mollification to derive the strong convergence rate.
The smaller the $\kappa$, the faster the rate (see \eqref{0403:06} for details).
The lower bounds $1-\alpha/2$ and $\alpha/2$ of $\kappa$ come from the first order
term $b\left(\tfrac{t}{\eps},\cdot\right)\cdot\nabla$ and $\sL_{\sigma(t/\eps,\cdot)}$ respectively.
When $\alpha>1$, $\sL_{\sigma(t/\eps,\cdot)}$ is the dominant
term. However, when $\alpha\in(0,1)$, the gradient term $b\left(\tfrac{t}{\eps},\cdot\right)\cdot\nabla$
is of higher order than $\sL_{\sigma(t/\eps,\cdot)}$ (see Lemma \ref{0116Lem}).
\end{remark}

Actually, our result Theorem \ref{Th1}
can be applied to the following slow-fast systems:
\begin{equation}\label{0815:02}
\dif X_t^\eps=f(X_t^\eps,Y_t^\eps)\dif t+\sigma(X_t^\eps)\dif L_t^{(\alpha)},
\quad X_0^\eps=x\in\R^d,
\end{equation}
where $f:\R^{n}\times\R^m\rightarrow\R^n$ belongs to $L^\infty(\R^m;\bC^\beta(\R^n))$ and
the fast process $Y_t^\eps:=Y_{t/\eps}$, defined on some probability space
$(\Omega',\cF',\mP')$, is independent of $L_t^{(\alpha)}$.
In this case, we need the following condition {\bf(A$_f$)}, which can be viewed as a kind of
strong law of large numbers:
\begin{itemize}
  \item [{\bf(A$_f$)}]
There exist $\bar{f}\in\bC^\beta(\R^n)$, $\gamma\leq\beta$ and
$\ell:\R_+\times\Omega'\rightarrow \R_+$ with $\ell(\cdot,\omega')\in\Xi,$ $\mP'$-a.s.
such that
\begin{equation*}
\left\|\frac{1}{T}\int_0^T
f(\cdot,Y_s(\omega'))\dif s-\bar{f}(\cdot)\right\|_{\bC^\gamma}
\leq \ell(T,\omega')
\quad \mP'-{\rm a.s.}
\end{equation*}
\end{itemize}

\begin{remark}\rm
Note that condition {\bf(A$_f$)} is not difficult to verify.
Let $Y_t^y:=Y_t$ with $Y_0=y\in\R^m$.
Assume that $Y_t^y$ is Markovian.
Define $P(t,y,A):=\mP'(Y_t^y\in A)$, $A\in\cB(\R^m)$. If $P$ admits an
invariant measure $\mu$ such that
\begin{equation*}
\lim_{t\rightarrow\infty}\|P(t,y,\cdot)-\mu\|_{TV}=0 \quad {\text{for every}}
~y\in\R^m,
\end{equation*}
then for any $\mu$-integrable $g$ and $y\in\R^m$,
\begin{equation*}
\lim_{T\rightarrow\infty}\frac{1}{T}\int_0^Tg(Y_t^y)\dif t
=\int_{\R^m}g(y)\mu(\dif y)
\quad \mP'-{\text{a.s.}};
\end{equation*}
see e.g. (2.24) in \cite{Bhat82}.
Let $f(x,y):=\varphi(x)g(y)$. If $\varphi\in\bC^{\beta}$ then
$f$ satisfies {\bf(A$_f$)} with $\gamma=\beta$.
\end{remark}

Then with the help of Theorem \ref{Th1},
we have the following result.
\begin{theorem}\label{thSF}
Assume that {\bf(A$_f$)} and one of the following conditions hold:
\begin{itemize}
  \item[(i)] $\alpha\in(0,1]$, $p\geq1$  and $\sigma=c\mI$, where $c>0$;
  \item[(ii)] $\alpha\in(1,2)$, $p\in[1,\alpha)$ and
  {\bf (H$_\sigma$)} holds.
\end{itemize}
Then for any $\beta\in(1-\tfrac{\alpha}{2},1)$, there exists a constant $C>0$ such that
\begin{equation}
\mE\left(\sup_{t\in[0,T]}\left|X_t^\eps-\bar{X}_t\right|^p\right)
\leq C \mE'\left([\ell(\tfrac{T}{\eps})]^{\frac{p(1\wedge\alpha)}{(1\wedge\alpha)+\delta_1}}\right),
\end{equation}
where $\delta_1$ is as in Theorem \ref{Th1} and $\bar{X}_t$ satisfies
\begin{equation*}
\dif \bar{X}_t=\bar{f}(\bar{X}_t)\dif t+\sigma(\bar{X}_t)\dif L_t^{(\alpha)},
\quad \bar{X}_0=x.
\end{equation*}
\end{theorem}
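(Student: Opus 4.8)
The plan is to derive Theorem \ref{thSF} from Theorem \ref{Th1} by conditioning on the randomness of the fast process $Y$. Since $\{Y_t^\eps,t\ge0\}$ is defined on an auxiliary space $(\Omega',\cF',\mP')$ independent of $\{L_t^{(\alpha)},t\ge0\}$, we may work on the product space. Fix $\omega'\in\Omega'$ outside a $\mP'$-null set and set $b^{\omega'}(t,x):=f(x,Y_{t}(\omega'))$, viewed as a time-inhomogeneous drift. Because $f\in L^\infty(\R^m;\bC^\beta(\R^n))$, we have $b^{\omega'}\in L^\infty(\R_+;\bC^\beta)$ with $\|b^{\omega'}\|_{L^\infty_T(\bC^\beta)}\le\|f\|_{L^\infty(\R^m;\bC^\beta)}$, a bound uniform in $\omega'$; and condition {\bf(A$_f$)} says precisely that $b^{\omega'}$ satisfies the averaged condition {\bf(A$_b$)} with the same $\bar f=\bar b$, the same $\gamma$, and with $\ell_1(\cdot)=\ell(\cdot,\omega')\in\Xi$. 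After rescaling as in the introduction, the slow equation \eqref{0815:02} becomes, conditionally on $\omega'$, exactly an equation of the form \eqref{Main01} with drift $b^{\omega'}(t/\eps,X_t^\eps)$ driven by $L^{(\alpha)}$.

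The key point is that the constant $C$ in the conclusion \eqref{Rate:01} of Theorem \ref{Th1} depends on the coefficients only through $\Theta$, the ellipticity constant $\Lambda_1$ from {\bf(H$_\sigma$)}, and $\|g\|_{L^\infty_T(\bC^\beta)}$ (equivalently $\|b\|_{L^\infty_T(\bC^\beta)}$) — but \emph{not} on the modulus $\ell_1$ itself, nor on any finer structure of the drift. Hence, applying Theorem \ref{Th1} with $b$ replaced by $b^{\omega'}$, we obtain a single constant $C>0$, independent of $\omega'$, such that for every $\eps>0$,
\begin{equation*}
\mE_{L}\left(\sup_{t\in[0,T]}\left|X_t^\eps(\omega')-\bar X_t(\omega')\right|^p\right)
\le C\left[\ell\left(\tfrac{T}{\eps},\omega'\right)\right]^{\frac{p(1\wedge\alpha)}{(1\wedge\alpha)+\delta_1}},
\end{equation*}
where $\mE_L$ denotes expectation over the $\alpha$-stable noise only and $\delta_1$ is as in Theorem \ref{Th1} (the term involving $\ell_2$ disappears since $\sigma=\sigma(x)$ is time-independent here, so $\bar\sigma=\sigma$ and we may take $\ell_2\equiv0$; in case (i), $\sigma=c\mI$ directly). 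Taking $\mP'$-expectation of both sides and using Fubini (legitimate since the integrand is nonnegative and measurable) yields
\begin{equation*}
\mE\left(\sup_{t\in[0,T]}\left|X_t^\eps-\bar X_t\right|^p\right)
=\mE'\,\mE_L\left(\sup_{t\in[0,T]}\left|X_t^\eps-\bar X_t\right|^p\right)
\le C\,\mE'\left(\left[\ell\left(\tfrac{T}{\eps}\right)\right]^{\frac{p(1\wedge\alpha)}{(1\wedge\alpha)+\delta_1}}\right),
\end{equation*}
which is the claimed estimate.

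The main obstacle, and the point that needs to be checked carefully rather than asserted, is the \emph{uniformity of the constant $C$ in $\omega'$}: one must track through the proof of Theorem \ref{Th1} that every intermediate constant — from the Schauder estimate \eqref{SchM0124}, the Krylov-type estimate \eqref{0324:01}, the fluctuation bound \eqref{0414:01}, and Zvonkin's transform — is controlled purely in terms of $\Theta$, $\Lambda_1$, and the uniform bound $\|f\|_{L^\infty(\R^m;\bC^\beta)}$ on the $\bC^\beta$-norm of $b^{\omega'}$. This holds because in all those estimates the drift enters only through its $L^\infty_T(\bC^\beta)$-norm; the modulus of averaging $\ell_1$ appears only on the right-hand side of \eqref{0414:01} and hence only on the right-hand side of the final bound, never inside a constant. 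A secondary, routine point is measurability in $\omega'$ of the random variable $\sup_{t\in[0,T]}|X_t^\eps-\bar X_t|$ and of $\ell(T/\eps,\cdot)$, so that Fubini applies; this follows from the joint measurability of the solution map on the product space together with the hypothesis $\ell(\cdot,\omega')\in\Xi$ for $\mP'$-a.e.\ $\omega'$. Once uniformity is established, the conditioning argument above gives the result with essentially no further work.
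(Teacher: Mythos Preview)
Your proposal is correct and follows essentially the same approach as the paper: freeze $\omega'$, apply Theorem \ref{Th1} to the frozen drift $b^{\omega'}(t,x)=f(x,Y_t(\omega'))$, and then take $\mP'$-expectation. Your version is in fact more careful than the paper's, which omits the discussion of the uniformity of $C$ in $\omega'$ and the measurability needed for Fubini.
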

\begin{proof}
Define
$$
b(t,x)(\omega'):=f(x,Y_t(\omega')).
$$
Then we can rewrite \eqref{0815:02} as follows
\begin{equation*}
\dif X_t^\eps=b(t/\eps,X_t^\eps)(\omega')\dif t+\dif L_t^{(\alpha)}.
\end{equation*}
By Theorem \ref{Th1}, one sees that
\begin{equation}\label{0815:03}
\mE\left(\sup_{t\in[0,T]}|X_t^\eps(\cdot,\omega')-\bar{X}_t(\cdot)|^p\right)
\lesssim [\ell(T/\eps)(\omega')]^{\frac{p(1\wedge\alpha)}{1\wedge\alpha+\delta}}.
\end{equation}
Taking expectations on both side of \eqref{0815:03}, we complete the proof.
\end{proof}

\begin{Examples}\rm\label{Ex3}
Consider the following slow-fast system:
\begin{equation}\label{0819:01}
\begin{cases}
\dif X_t^\eps=f(X_t^\eps,Y_t^\eps)\dif t+\sigma(X_t^\eps)\dif L_t^{(\alpha)},\quad X_0^\eps=x_0\in\R^n\\
\dif Y_t^\eps=\frac{1}{\eps}B(Y_t^\eps)\dif t+\frac{1}{\sqrt{\eps}}\dif W_t,\quad Y_0^\eps=y_0\in\R^m,
\end{cases}
\end{equation}
where $f\in L^\infty(\R_y^m;\bC_b^{\beta_1})\cap L^\infty(\R_x^n;\bC_b^{\beta_2})$
with $(\beta_1,\beta_2)\in(1-\tfrac{\alpha}{2},1)\times(0,1]$,
$\sigma$ satisfies {\bf (H$_\sigma$)}  and
$B$ satisfies that
\begin{equation*}
\lim_{|y|\rightarrow\infty}\langle B(y),y\rangle=-\infty.
\end{equation*}
Let $Y_t^{y_0}$ be the solution to
\begin{equation}\label{0822:01}
\dif Y_t=B(Y_t)\dif t+\dif \widetilde{W}_t,\quad Y_0=y_0,
\end{equation}
where $\widetilde{W}_t:=\tfrac{1}{\sqrt{\eps}}W_{\eps t}$.
Note that $Y_t^\eps=Y_{t/\eps}^{y_0}$.
It follows from \cite[Theorem 2]{Vere97} that there exists a unique invariant
measure $\mu$ such that
for any $k>0$, $m>2k+2$ and $\varphi\in L^\infty$,
\begin{equation}\label{0819:02}
\left|\mE\varphi(Y_t^{y_0})-\mu(\varphi)\right|\leq C (1+|y_0|^m)(1+t)^{-(k+1)}\|\varphi\|_\infty.
\end{equation}
By \cite[Lemma 1]{PV03}, we have for any $p\geq1$,
\begin{equation*}
\mE\left(|Y_t^{y_0}|^{p}\right)\leq C_p(1+|y_0|^{p+2}).
\end{equation*}
Hence, by \cite[Corollary 2.4]{Shi06}, one sees that for any $\kappa\in(0,1/2)$
there is a random variable $\xi_{\kappa,f}\geq1$ such that
\begin{equation}\label{0820:04}
\left|\frac{1}{T}\int_0^Tf(x,Y_t^{y_0})\dif t-\int f(x,y)\mu(\dif y)\right|
\leq \xi_{\kappa,f}\|f(x,\cdot)\|_\infty T^{-\frac{1}{2}+\kappa},
\end{equation}
where $\xi_{\kappa,f}$ satisfies that for any $m\geq1$,
\begin{equation}\label{0820:05}
\mE\xi_{\kappa,f}^m\leq C_{m,k}\|f(x,\cdot)\|^{m(m+1)}_{\bC^{\beta_2}}(|y_0|^{p+2}+1).
\end{equation}
In particular, letting $f_{x_1,x_2}(\cdot):=f(x_1,\cdot)-f(x_2,\cdot)$,
we have
\begin{align}\label{0820:06}
\left|\frac{1}{T}\int_0^Tf_{x_1,x_2}(Y_t^{y_0})\dif t-\int f_{x_1,x_2}(y)\mu(\dif y)\right|
\leq \xi_{\kappa,f_{x_1,x_2}}\|f_{x_1,x_2}\|_\infty T^{-\frac{1}{2}+\kappa}.
\end{align}

It follows from \eqref{0820:04}, \eqref{0820:05} and \eqref{0820:06} that
{\bf(A$_f$)} holds with $\gamma=\beta=\beta_1$ and
$$
\bar{f}(x):=\int_{\R^m}f(x,y)\mu(\dif y).
$$
Hence, if $\sigma=c\mI$ with $c>0$ when $\alpha\in(0,1]$,
then by Theorem \ref{thSF} we obtain that
\begin{equation*}
\mE\left(\sup_{t\in[0,T]}\left|X_t^\eps-\bar{X}_t\right|\right)
\lesssim \eps^{(\frac{1}{2}-\kappa)\frac{1\wedge\alpha}{1\wedge\alpha+\delta_1}}.
\end{equation*}
Define the order of convergence rate by
$$
R:=\frac{(\frac12-\kappa)(1\wedge\alpha)}{1\wedge\alpha+\delta_1}.
$$
If $(\alpha,\beta)\in\cA_0$ then $R=\tfrac12-$.
Recall that
$\delta_1=\left[\left(\tfrac{\alpha}{2}-\beta\right)
\vee\left(2-\tfrac{3\alpha}{2}-\beta\right)
+\iota\right]\vee0$.
Since $\iota>0$ is any small constant,
letting $\iota\in(0,\beta_1-1+\tfrac{\alpha}{2})$, we have
\begin{align*}
R
&
=\frac{(\frac12-\kappa)(1\wedge\alpha)}{1\wedge\alpha
+[(\frac{\alpha}{2}-\beta_1)\vee(2-\frac{3\alpha}{2}-\beta_1)+\iota]\vee0}\\
&
>\frac{(\frac12-\kappa)(1\wedge\alpha)}{1\wedge\alpha
+[(\alpha-1)\vee(1-\alpha)]}=(\frac12-\kappa)(\alpha\wedge\frac{1}{\alpha}).
\end{align*}

\end{Examples}

Now we introduce the following result concerning the weak convergence of
the averaging principle for SDEs when $(\alpha,\beta)\in\cA_2$,
which includes distributional drifts.
\begin{theorem}\label{Th2}
Assume that {\bf (A$_b$)} and  one of the following conditions hold:
\begin{itemize}
  \item[(i)] $(\alpha,\beta)\in(0,1]\times(1-\alpha,1-\tfrac{\alpha}{2}]$  and $\sigma=c\mI$, where $c>0$;
  \item[(ii)] $(\alpha,\beta)\in(1,2)\times(\tfrac{1-\alpha}{2},1-\tfrac{\alpha}{2}]$,
  {\bf (H$_\sigma$)} and {\bf (A$_\sigma$)} hold.
\end{itemize}
For any $x\in\R^d$ and $t\geq 0$, let $\mP_x^\eps(t)$ and $\mP_x(t)$ be the time marginal laws of the martingale solutions to \eqref{Main01}  and  \eqref{0318:03}, respectively.
We have
\begin{align}\label{Rate:02}
\lim_{\eps\to0}\sup_{t\in[0,T]}\sup_{x\in\R^d}
\cW_1\left(\mP_x^\eps(t),\mP_x(t)\right)=0,
\end{align}
where for $\mu,\nu\in\cP(\R^d)$,
$\cW_1(\mu,\nu)$ is the Kantorovich-Rubinstein metric defined by
$$
\cW_1(\mu,\nu):=\sup\left\{\left|\int_{\R^d}f(\dif\mu-\dif\nu)\right|
:~\|f\|_\infty+\mathop{\sup}_{x,y\in\R^d\atop x\neq y}\frac{|f(x)-f(y)|}{|x-y|}\leq1\right\}.
$$
\end{theorem}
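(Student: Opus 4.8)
The plan is to run a Zvonkin-type transform to remove the singular drift, reduce the problem to the convergence of SDEs with regularized coefficients, and then close the argument via a tightness/identification of martingale solutions combined with the quantitative control of the drift fluctuation \eqref{0414:01}. First I would fix a mollification parameter and set $b_n(t,x):=b(t,\cdot)*\rho_n(x)$, $\bar b_n:=\bar b*\rho_n$, and $\sigma_n,\bar\sigma_n$ analogously, noting that {\bf(A$_b$)} and {\bf(A$_\sigma$)} are stable under mollification (with $\ell_1,\ell_2$ unchanged up to constants) and that the H\"older/Besov norms of $b_n$ in $\bC^{\gamma}$, $\bar b_n$ in $\bC^\beta$ are uniformly bounded. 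For each fixed $n$ the coefficients are smooth enough that \eqref{Main01}$_n$ and \eqref{0318:03}$_n$ have unique strong solutions $X^{\eps,n}$, $\bar X^n$, and Theorem \ref{Th1} (or the slow-fast version, Theorem \ref{thSF}) applies to give
\begin{equation*}
\mE\Big(\sup_{t\in[0,T]}|X_t^{\eps,n}-\bar X_t^n|^p\Big)
\le C_n\Big[\big(\ell_1(\tfrac T\eps)\big)^{\frac{p(1\wedge\alpha)}{(1\wedge\alpha)+\delta_1}}
+\big(\ell_2(\tfrac T\eps)\big)^{\frac p2}\Big]\xrightarrow[\eps\to0]{}0,
\end{equation*}
with $p=1$; the constant $C_n$ a priori depends on $n$, and removing that dependence is the delicate point discussed below.

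The next step is to compare the true martingale solution $\mP_x^\eps$ of \eqref{Main01} with the law of $X^{\eps,n}$. Here I would use the Zvonkin transform associated with the resolvent PDE \eqref{Pareq24}: choosing $\lambda$ large and $g=b$ (resp. $g=b_n$), Theorem \ref{SchE24} with $\vartheta=\beta\le 0$ (the range $\cA_2$ is exactly where $b$ is a distribution) yields a solution $u_\lambda\in L^\infty_T(B^{\vartheta+\alpha}_{\infty,\infty})$ with $\vartheta+\alpha>1$, so $\Phi:=\mathrm{id}+u_\lambda$ is a $C^1$ diffeomorphism uniform in $n$ (because the bound in \eqref{SchM0124} depends on $b$ only through $\|g\|_{L^\infty_T(\bC^\beta)}$, which is uniformly bounded along the mollification). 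Under $\Phi$ the singular drift is transformed into a bounded H\"older drift plus a transformed diffusion coefficient, and the martingale problem for \eqref{Main01} becomes one with coefficients converging in the right topology as $n\to\infty$; combined with the Krylov-type estimate \eqref{0324:01} (valid uniformly in $n$ by the same Schauder bound) and the stability theory for martingale solutions in the sense of Ethier--Kurtz \cite{EK86}, this gives that $\mathrm{Law}(X^{\eps,n})\to\mP_x^\eps$ as $n\to\infty$, uniformly in $\eps\in(0,1]$ and in $x$, in the $\cW_1$-metric on time marginals; likewise $\mathrm{Law}(\bar X^n)\to\mP_x$ uniformly in $x$.

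Finally I would assemble the pieces by the triangle inequality for $\cW_1$:
\begin{equation*}
\cW_1\big(\mP_x^\eps(t),\mP_x(t)\big)
\le \cW_1\big(\mP_x^\eps(t),\mathrm{Law}(X_t^{\eps,n})\big)
+\mE|X_t^{\eps,n}-\bar X_t^n|
+\cW_1\big(\mathrm{Law}(\bar X_t^n),\mP_x(t)\big).
\end{equation*}
Given $\eta>0$, pick $n$ large so that the first and third terms are $<\eta/3$ uniformly in $\eps,x,t$; then let $\eps\to0$ to kill the middle term by Theorem \ref{Th1}. Taking suprema over $t\in[0,T]$ and $x\in\R^d$ and then $\eta\downarrow0$ gives \eqref{Rate:02}. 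The main obstacle, which I would address carefully, is exactly the uniformity in $n$: one must verify that all the Schauder/Krylov constants, the diffeomorphism bounds for the Zvonkin map, and the moment estimates used in the tightness argument depend on $b$ and $\sigma$ only through quantities ($\|b\|_{L^\infty_T(\bC^\beta)}$, ellipticity and Lipschitz constants of $\sigma$, and the functions $\ell_1,\ell_2$) that are controlled uniformly along the mollification — this is what allows the double limit $\lim_{\eps\to0}\lim_{n\to\infty}$ to be exchanged and is the crux of handling a genuinely distributional drift.
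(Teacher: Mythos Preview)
Your proposal is correct and follows essentially the same architecture as the paper's proof: mollify the drift, use the triangle inequality to split into (a) stability of the martingale solutions under mollification, uniform in $\eps$, and (b) strong convergence $X^{\eps,n}\to\bar X^n$ as $\eps\to0$ for each fixed $n$, with an $n$-dependent constant. The only substantive difference is in how step (a) is handled: the paper does not rederive this stability via a Zvonkin transform plus Ethier--Kurtz-type arguments as you suggest, but instead invokes existing quantitative results (\cite[Theorem~4.6]{WH23} and \cite[Theorem~4.1]{HRW24}) which give directly $\sup_{\eps\ge0}\sup_{t,x}\|\mP_x^{\eps,n}(t)-\mP_x^\eps(t)\|_{\var}\lesssim n^{-\delta}$ (total variation for $\alpha\in(1,2)$, and an analogous $\cW_1$ bound for $\alpha\in(0,1]$). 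For step (b) the paper redoes the Zvonkin-plus-ODE argument inline (using the elliptic equation \eqref{Elleq24} with $\bar b_n$ and Lemma~\ref{0116Lem}) rather than citing Theorem~\ref{Th1}, but this is cosmetic. Two minor points: there is no need to mollify $\sigma$, since {\bf(H$_\sigma$)} already gives Lipschitz regularity; and no limit exchange is actually performed---the uniformity in $\eps$ of step (a) is exactly what lets you take $\limsup_{\eps\to0}$ of the whole expression and then send $n\to\infty$.
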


To illustrate our results, we introduce the following example.
\begin{Examples}\rm
Let $\{Z_t^z,t\geq0\}_{z\in\R^{m}}$ be a family of Markov processes.
Let $P_t\varphi(z):=\mE \varphi(Z_t^z)$ be the associated semigroup. Suppose that
$(P_t)_{t\geq 0}$ admits a unique stationary measure $\mu$ such that
\begin{equation}\label{0715:01}
\sup_{z\in\mR^d}\sup_{\|\varphi\|_\infty\leq 1}
\left|\frac{1}{T}\int_0^T P_t\varphi(z)\dif t
-\mu(\varphi)\right|\leq \ell(T),
\end{equation}
where $\ell\in\Xi$ and $\mu(\varphi):=\int_{\R^d}\varphi (z)\mu(\dif z)$.
Suppose that $\varphi(z,x):\R^n\times\mR^n\rightarrow\R^{n}$ satisfies
$$
\varphi\in L^\infty(\R^n;\bC^\beta(\R^n;\mR^n)) \quad {\text{with}}
\quad \beta\in[0,1).
$$
Consider  the following perturbation equation:
\begin{equation*}
\dif X_t^\eps=[P_{t/\eps}\varphi(\cdot,X_t^\eps)](X_t^\eps)\dif t+\dif L_t^{(\alpha)}, \quad X_0^\eps=x\in\R^n,
\end{equation*}
and the averaged equation
\begin{equation*}
\dif \bar{X}_t=\mu(\varphi(\cdot,\bar{X}_t))\dif t+\dif L_t^{(\alpha)},\ \ \bar X_0=x.
\end{equation*}
By \eqref{0715:01}, it is easy to see that {\bf (A$_b$)} holds for
$b(t,x):=P_{t}\varphi(\cdot,x)(x)$ and $\bar{b}(x):=\mu(\varphi(\cdot,x))$.
Now by Theorems \ref{Th1} and \ref{Th2}, we have the following conclusions:
\begin{enumerate}[(i)]
\item
If $(\alpha,\beta)\in\cA_1$, then it follows from Theorem \ref{Th1}, \eqref{0715:01}
and Remark \ref{Rk1} (2)(ii)
that there exists a constant $C>0$ such that for any $0<\eps\ll1$,
\begin{equation*}
\mE\left(\sup_{t\in[0,T]}|X_t^\eps-\bar{X}_t|\right)
\leq C\ell(\tfrac{T}{\eps})^{\frac{2\wedge\alpha}{2+\alpha}-}.
\end{equation*}

\item If $(\alpha,\beta)\in\cA_2$ and $\beta\geq0$, then it follows from Theorem \ref{Th2} that
\begin{equation*}
\lim_{\eps\rightarrow0}\sup_{t\in[0,T]}
\cW_1\left(\cL_{X_t^\eps},\cL_{\bar{X}_t}\right)=0.
\end{equation*}
\end{enumerate}
\end{Examples}

\section{Preliminaries}\label{Pre}
In this section, we introduce some definitions and results concerning Besov spaces
and $\alpha$-stable process.

\subsection{Besov spaces}\label{BS}
Let $\sS(\R^d)$ be the Schwartz space of all rapidly decreasing functions,
and $\sS'$ its dual, which is called Schwartz generalized function
(or tempered distribution) space.
For $f\in\sS(\R^d)$, define its Fourier transform and inverse transform
as follows:
$$
\cF f(\xi)=\hat{f}(\xi):=(2\pi)^{-d/2}\int_{\R^d}\e^{-i\xi\cdot x}f(x)\dif x,\quad
\cF^{-1}f(\xi)=\check{f}(\xi):=(2\pi)^{-d/2}\int_{\R^d}\e^{i\xi\cdot x}f(x)\dif x
$$
For $f\in\sS'(\R^d)$, by Schwartz's duality,
$\hat{f}$ and $\check{f}$ are the unique elements in $\sS'(\R^d)$
so that (see \cite{BCD11})
$$
\langle \hat{f},\phi\rangle=\langle f,\hat{\phi}\rangle,
\quad
\langle \check{f},\phi\rangle=\langle f,\check{\phi}\rangle,
\quad
\forall \phi\in\sS(\R^d).
$$
Let $\chi:\R^d\rightarrow[0,1]$ be a smooth radial function such that
$\chi(\xi)=1$ for $|\xi|\leq 1$ and $\chi(\xi)=0$ for $|\xi|\geq 3/2$.
Define
$$
\varphi(\xi):=\chi(\xi)-\chi(2\xi).
$$
It is easy to see that $\varphi\geq0$ and for each $\xi\in\R^d$,
\begin{equation}
\chi(2\xi)+\sum_{j=0}^k\varphi(2^{-j}\xi)=\chi(2^{-k}\xi)\rightarrow1,
\quad
{\text{as}} ~k\rightarrow\infty,
\end{equation}
as well as
\begin{equation*}
{\text{supp}}[\varphi(2^{-j}\cdot)]\cap{\text{supp}}[\varphi(2^{-k}\cdot)]=\emptyset
\quad {\text{if}}~|j-k|\geq2.
\end{equation*}
From now on, we fix such $\chi$ and $\varphi$, and define the following dyadic operators
$\cR_j$ on $\sS'(\R^d)$:
\begin{equation*}
\cR_jf:=
\begin{cases}
\cF^{-1}\left(\chi(2\cdot)\cF f\right), & j=-1\\
\cF^{-1}\left(\varphi(2^{-j}\cdot)\cF f\right), & j\geq0.
\end{cases}
\end{equation*}
Setting $h_j:=\cF^{-1}\varphi(2^{-j}\cdot)$ for $j\geq0$ and $h_{-1}:=\cF^{-1}\chi(2\cdot)$,
we have $\cR_jf(x)=\int_{\R^d}h_j(y)f(x-y)\dif y$,
and $h_j(\cdot)=2^{jd}h_0(2^j\cdot)$, which implies that
for all $\theta>0$ and $k\in\N$,
\begin{align}\label{0301:01}
\int_{\R^d}|y|^\theta|\nabla^kh_j(y)|\dif y
=2^{(k-\theta)j}\int_{\R^d}|y|^\theta|\nabla^kh_0(y)|\dif y
\lesssim 2^{(k-\theta)j}.
\end{align}

Now we can introduce the definition of Besov spaces.
\begin{definition}\label{defBS}
For any $s\in\R$ and $p,q\in[1,\infty]$, the Besov space $B_{p,q}^s$ is defined as the set of all
$f\in\sS'(\R^d)$ with
\begin{equation*}
\|f\|_{B_{p,q}^s}:=\b1_{\{q<\infty\}}\left(\sum_{j\geq-1}2^{jsq}\|\cR_jf\|_{L^p}^q\right)^{1/q}
+\b1_{\{q=\infty\}}\left(\sup_{j\geq-1}2^{js}\|\cR_jf\|_{L^p}\right)<\infty.
\end{equation*}
\end{definition}
\begin{remark}\rm
(i) For $s\in\R$, let $\bC^s$ be the usual H\"older space when $s>0$,
and define $\bC^{s}:=B_{\infty,\infty}^s$ for $s\leq0$.
Note that if $s>0$ and $s\notin\N$, then there exist constants $C_1,C_2>0$ such that
\begin{equation*}
C_1\|f\|_{\bC^s}\leq \|f\|_{B_{\infty,\infty}^s} \leq C_2 \|f\|_{\bC^s}.
\end{equation*}
In the case where $s\in\N$, the space $B_{\infty,\infty}^s$ is strictly larger
than $\bC^s$, and
there exists a constant $C>0$ such that
\begin{equation*}
\|f\|_{B_{\infty,\infty}^s}\leq C\|f\|_{\bC^s}.
\end{equation*}

(ii) Define $\cR_{-2}:=0$. Note that for all $j\geq-1$ we have
\begin{equation*}
\int_{\R^d}\cR_jf(x)g(x)\dif x=\int_{\R^d}f(x)\cR_jg(x)\dif x,
\quad \forall f\in\sS'(\R^d), g\in\sS(\R^d),
\end{equation*}
and
\begin{equation}\label{0123:05}
\cR_j=\cR_j\widetilde{\cR}_j,
\end{equation}
where $\widetilde{\cR}_j:=\cR_{j-1}+\cR_{j}+\cR_{j+1}$.
\end{remark}

Recall the following Bernstein's inequality; see e.g. \cite[Lemma 2.2]{CZZ22}.
\begin{lemma}\label{Lem28}
Let $1\leq p\leq q\leq\infty$. For any $k=0,1,2...$ and $\gamma\in(-1,2)$,
there exits a constant $C=C(k,p,q,d)>0$
such that for all $f\in\sS'(\R^d)$ and $j\geq-1$,
\begin{equation}\label{Berneq01}
\|\nabla^k\cR_jf\|_q\lesssim_C2^{(k+d(\frac{1}{p}-\frac{1}{q}))j}\|\cR_jf\|_p,
\end{equation}
and for any $j\geq0$,
\begin{equation}\label{Serneq02}
\|\left(-\Delta\right)^{\frac\gamma2}\cR_jf\|_q
\lesssim_C2^{\left(\gamma+d(\frac1p-\frac1q)\right)j}\|\cR_jf\|_p.
\end{equation}
\end{lemma}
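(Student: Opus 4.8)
The plan is to reduce both bounds to Young's convolution inequality after rewriting each dyadic block as a convolution against a single rescaled Schwartz kernel. First I would fix, once and for all, a radial $\widetilde{\varphi}\in C_c^\infty(\R^d\setminus\{0\})$ with $\widetilde{\varphi}\equiv 1$ on a neighbourhood of $\supp\varphi$ (recall $\supp\varphi\subseteq\{\tfrac12\le|\xi|\le\tfrac32\}$ is bounded away from the origin), and a radial $\widetilde{\chi}\in C_c^\infty(\R^d)$ with $\widetilde{\chi}\equiv 1$ on $\{|\xi|\le1\}\supseteq\supp\chi(2\cdot)$. Let $\psi_j\in\sS(\R^d)$ be the Schwartz function for which $g\mapsto\psi_j*g$ is the Fourier multiplier operator with symbol $\widetilde{\varphi}(2^{-j}\cdot)$ for $j\ge0$, respectively with symbol $\widetilde{\chi}$ for $j=-1$; in particular, for $j\ge0$ one has the scaling $\psi_j=2^{jd}\psi_0(2^j\cdot)$ with $\psi_0\in\sS(\R^d)$ fixed. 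Since the symbol of $g\mapsto\psi_j*g$ equals $1$ on the support of the symbol defining $\cR_j$, it follows that
\begin{equation*}
\cR_jf=\psi_j*\cR_jf,\qquad j\ge-1,\quad f\in\sS'(\R^d).
\end{equation*}
(As the claimed inequalities are trivial when $\|\cR_jf\|_{L^p}=\infty$, we may assume $\|\cR_jf\|_{L^p}<\infty$, and then $\psi_j\in L^1$ makes this convolution licit.)

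For the first estimate, differentiate: $\nabla^k\cR_jf=(\nabla^k\psi_j)*\cR_jf$, and apply Young's inequality $\|u*v\|_{L^q}\le\|u\|_{L^s}\|v\|_{L^p}$ with the exponent $s$ determined by $\tfrac1s:=1-\tfrac1p+\tfrac1q$. The hypothesis $p\le q$ is exactly what guarantees $s\in[1,\infty]$, and one has $1-\tfrac1s=\tfrac1p-\tfrac1q$. For $j\ge0$, the scaling of $\psi_j$ gives $\nabla^k\psi_j=2^{j(d+k)}(\nabla^k\psi_0)(2^j\cdot)$, hence
\begin{equation*}
\|\nabla^k\psi_j\|_{L^s}=2^{j(k+d(1-1/s))}\|\nabla^k\psi_0\|_{L^s}=2^{j(k+d(1/p-1/q))}\|\nabla^k\psi_0\|_{L^s},
\end{equation*}
and $\|\nabla^k\psi_0\|_{L^s}<\infty$ since $\psi_0$ is Schwartz; this yields \eqref{Berneq01} for $j\ge0$ with a constant $C=C(k,p,q,d)$. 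For $j=-1$ there is no scaling, and Young's inequality directly gives $\|\nabla^k\cR_{-1}f\|_{L^q}\le\|\nabla^k\psi_{-1}\|_{L^s}\|\cR_{-1}f\|_{L^p}=C\|\cR_{-1}f\|_{L^p}$, which already implies \eqref{Berneq01} because the prefactor $2^{-(k+d(1/p-1/q))}$ there is a positive constant (bounded below by $2^{-(k+d)}$). Alternatively, for $j\ge0$ one may instead use $\cR_j=\widetilde{\cR}_j\cR_j$ — which follows from \eqref{0123:05} and the commutativity of Fourier multipliers — to write $\cR_jf=\sum_{|i-j|\le1}h_i*\cR_jf$ and bound each $\|\nabla^kh_i\|_{L^s}$ by the scaling already recorded in \eqref{0301:01}.

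For the second estimate, fix $j\ge0$. Since $\widetilde{\varphi}(2^{-j}\cdot)\equiv1$ on $\supp(\varphi(2^{-j}\cdot)\widehat f)$, we have $(-\Delta)^{\gamma/2}\cR_jf=\cF^{-1}\big(|\xi|^\gamma\widetilde{\varphi}(2^{-j}\xi)\big)*\cR_jf=:K_j*\cR_jf$. Writing $|\xi|^\gamma\widetilde{\varphi}(2^{-j}\xi)=2^{j\gamma}m_0(2^{-j}\xi)$ with $m_0(\xi):=|\xi|^\gamma\widetilde{\varphi}(\xi)$, the decisive observation is that $m_0\in C_c^\infty(\R^d)$ — this uses that $\widetilde{\varphi}$ is supported away from the origin, which is precisely why the estimate is stated only for $j\ge0$ — so $K_0:=\cF^{-1}m_0\in\sS(\R^d)$ and $K_j=2^{j(\gamma+d)}K_0(2^j\cdot)$. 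Hence $\|K_j\|_{L^s}=2^{j(\gamma+d(1/p-1/q))}\|K_0\|_{L^s}$, and Young's inequality gives
\begin{equation*}
\|(-\Delta)^{\gamma/2}\cR_jf\|_{L^q}\le\|K_j\|_{L^s}\|\cR_jf\|_{L^p}\lesssim2^{j(\gamma+d(1/p-1/q))}\|\cR_jf\|_{L^p},
\end{equation*}
which is \eqref{Serneq02} (with a constant now also depending on $\gamma$, through $\|K_0\|_{L^s}$).

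The computations are all routine, so I do not anticipate a genuine obstacle; the only points meriting a little care are the identity $\cR_jf=\psi_j*\cR_jf$ at the level of tempered distributions (dealt with by reducing to $\|\cR_jf\|_{L^p}<\infty$ and using $\psi_j\in\sS$), the smoothness of the symbol $|\xi|^\gamma\widetilde{\varphi}$, and the bookkeeping of conjugate exponents in Young's inequality. Finally, the assumption $\gamma\in(-1,2)$ plays no role in this argument: it delivers \eqref{Serneq02} for every real $\gamma$ once the Fourier support of $\cR_jf$ is bounded away from the origin, which holds for all $j\ge0$.
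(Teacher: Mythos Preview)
Your argument is correct and is the standard proof of Bernstein's inequalities via Young's convolution inequality and dyadic scaling of the multiplier kernels. The paper does not actually prove this lemma: it simply cites \cite[Lemma 2.2]{CZZ22}, so there is no ``paper's own proof'' to compare against beyond noting that your route is the expected one (and, as you observe, the restriction $\gamma\in(-1,2)$ is not needed for \eqref{Serneq02}).
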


\br\rm
In view of Lemma \ref{Lem28}, we have for any $1\le p\le q\le\infty$,
$r\in[1,\infty]$, $s\in\mR$ and $\eps>0$, the following embedding is continuous
\begin{align}\label{1128:00}
B^{s+d(\frac1p-\frac1q)}_{p,r}\hookrightarrow B^{s}_{q,r} \hookrightarrow B^{s}_{q,\infty} \hookrightarrow B^{s-\eps}_{q,1}.
\end{align}
\er

The following commutator estimate is proven in \cite[Lemma 2.3]{CZZ22}.
\begin{lemma}
Let $p,p_1,p_2,q_1,q_2\in[1,\infty]$ satisfying $\frac{1}{p}=\frac{1}{p_1}+\frac{1}{p_2}$
and $\frac{1}{q_1}+\frac{1}{q_2}=1$. Then for any $\kappa_1\in(0,1)$ and $\kappa_2\in[-\kappa_1,0]$
there exists a constant $C=C(d,p,p_1,p_2,\kappa_1,\kappa_2)>0$ such that
\begin{equation}\label{ComEs}
\|[\cR_j,f]g\|_p\leq C2^{-j(\kappa_1+\kappa_2)}
\begin{cases}
\|f\|_{B_{p_1,\infty}^{\kappa_1}}\|g\|_{p_2}, & \kappa_2=0,\\
\|f\|_{B_{p_1,\infty}^{\kappa_1}}\|g\|_{B_{p_2,\infty}^{\kappa_2}}, & \kappa_1+\kappa_2>0,\\
\|f\|_{B_{p_1,q_1}^{\kappa_1}}\|g\|_{B_{p_2,q_2}^{\kappa_2}}, & \kappa_1+\kappa_2=0.
\end{cases}
\end{equation}
\end{lemma}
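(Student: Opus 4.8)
The plan is to combine the kernel representation of the Littlewood--Paley blocks with Bony's paraproduct decomposition. Throughout write $S_m:=\sum_{k\le m-1}\cR_k$, so that $S_mf$ has Fourier support in $\{|\xi|\lesssim 2^m\}$, and split $[\cR_j,f]g=\cR_j(fg)-f\,\cR_j g$ according to the frequency of $f$. The argument has a core estimate, corresponding to the case $\kappa_2=0$ of \eqref{ComEs}, to which the other two cases are reduced.

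\emph{Core estimate.} For $h\in L^{p_2}$ and $f\in B^{\kappa_1}_{p_1,\infty}$, $\kappa_1\in(0,1)$, I would show $\|[\cR_j,f]h\|_{p}\lesssim 2^{-\kappa_1 j}\|f\|_{B^{\kappa_1}_{p_1,\infty}}\|h\|_{p_2}$. Write $f=S_{j-1}f+f^\sharp$ with $f^\sharp:=\sum_{k\ge j-1}\cR_k f$. Since $\cR_j h(x)=\int_{\R^d}h_j(z)h(x-z)\,\dif z$, the commutator with the smooth part is
$$
[\cR_j,S_{j-1}f]h(x)=-\int_0^1\!\!\int_{\R^d}h_j(z)\,\big(z\cdot\nabla S_{j-1}f(x-sz)\big)\,h(x-z)\,\dif z\,\dif s,
$$
so by Minkowski's integral inequality and H\"older ($\tfrac1p=\tfrac1{p_1}+\tfrac1{p_2}$) its $L^p$ norm is at most $\|\nabla S_{j-1}f\|_{p_1}\|h\|_{p_2}\int_{\R^d}|z|\,|h_j(z)|\,\dif z$. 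By \eqref{0301:01} the last integral is $\lesssim 2^{-j}$, while Lemma~\ref{Lem28} gives $\|\nabla S_{j-1}f\|_{p_1}\lesssim\sum_{k\le j-2}2^{(1-\kappa_1)k}\|f\|_{B^{\kappa_1}_{p_1,\infty}}\lesssim 2^{(1-\kappa_1)j}\|f\|_{B^{\kappa_1}_{p_1,\infty}}$, the sum converging because $\kappa_1<1$; multiplying yields the bound $2^{-\kappa_1 j}\|f\|_{B^{\kappa_1}_{p_1,\infty}}\|h\|_{p_2}$. For $f^\sharp$ one has $\|f^\sharp\|_{p_1}\lesssim\sum_{k\ge j-1}2^{-\kappa_1 k}\|f\|_{B^{\kappa_1}_{p_1,\infty}}\lesssim 2^{-\kappa_1 j}\|f\|_{B^{\kappa_1}_{p_1,\infty}}$ (using $\kappa_1>0$), and both $\cR_j(f^\sharp h)$ and $f^\sharp\cR_j h$ are then bounded by $\|f^\sharp\|_{p_1}\|h\|_{p_2}$ via H\"older and the uniform $L^p$-boundedness of $\cR_j$.

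\emph{Reduction for $\kappa_2\in[-\kappa_1,0)$.} Split $g=S_jg+g^\sharp$. Since $\kappa_2<0$, $S_j g\in L^{p_2}$ with $\|S_j g\|_{p_2}\lesssim\sum_{l\le j-1}2^{-\kappa_2 l}\|g\|_{B^{\kappa_2}_{p_2,\infty}}\lesssim 2^{-\kappa_2 j}\|g\|_{B^{\kappa_2}_{p_2,\infty}}$, so the core estimate applied to $(f,S_j g)$ already produces the factor $2^{-(\kappa_1+\kappa_2)j}$. For $g^\sharp=\sum_{l\ge j}\cR_l g$, a frequency-support inspection shows that $\cR_j(S_{j-1}f\cdot g^\sharp)$ and $S_{j-1}f\cdot\cR_j g^\sharp$ involve only finitely many blocks of $g$ at levels near $j$, so $[\cR_j,S_{j-1}f]g^\sharp=[\cR_j,S_{j-1}f]\widetilde g$ with $\widetilde g$ a finite sum of such blocks, $\|\widetilde g\|_{p_2}\lesssim 2^{-\kappa_2 j}\|g\|_{B^{\kappa_2}_{p_2,\infty}}$, and the smooth-part estimate above applies verbatim; the piece $f^\sharp\cR_j g^\sharp=f^\sharp\cR_j\widetilde g$ is again handled by H\"older. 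Finally, for $\cR_j(f^\sharp g^\sharp)$: expanding $f^\sharp g^\sharp=\sum_{k\ge j-1,\,l\ge j}\cR_k f\,\cR_l g$, the ``unbalanced'' interactions $|k-l|\ge 2$ force both indices within a bounded distance of $j$ (by the localization of $\cR_j$), leaving finitely many terms each $\lesssim 2^{-(\kappa_1+\kappa_2)j}\|f\|_{B^{\kappa_1}_{p_1,\infty}}\|g\|_{B^{\kappa_2}_{p_2,\infty}}$, while the resonant part is a tail $\sum_{k\ge j-N}\cR_j(\cR_k f\,\widetilde\cR_k g)$ with summands $\lesssim 2^{-(\kappa_1+\kappa_2)k}\|f\|_{B^{\kappa_1}_{p_1,\infty}}\|g\|_{B^{\kappa_2}_{p_2,\infty}}$. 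When $\kappa_1+\kappa_2>0$ this geometric series sums to $\lesssim 2^{-(\kappa_1+\kappa_2)j}$; when $\kappa_1+\kappa_2=0$ one has $2^{-(\kappa_1+\kappa_2)j}=1$ and instead writes $\|\cR_k f\|_{p_1}\|\widetilde\cR_k g\|_{p_2}=(2^{\kappa_1 k}\|\cR_k f\|_{p_1})(2^{\kappa_2 k}\|\widetilde\cR_k g\|_{p_2})$ and applies H\"older in $k$ with exponents $q_1,q_2$, obtaining $\lesssim\|f\|_{B^{\kappa_1}_{p_1,q_1}}\|g\|_{B^{\kappa_2}_{p_2,q_2}}$ --- which is exactly where $\tfrac1{q_1}+\tfrac1{q_2}=1$ and the $B^{\kappa_i}_{p_i,q_i}$ norms enter the third line of \eqref{ComEs}.

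The main obstacle is the commutator with the smooth part $S_{j-1}f$: one must genuinely extract one full order of cancellation, namely the factor $2^{-j}$ afforded by $\int_{\R^d}|z|\,|h_j(z)|\,\dif z\lesssim 2^{-j}$, in order to absorb the loss $\|\nabla S_{j-1}f\|_{p_1}\lesssim 2^{(1-\kappa_1)j}$ coming from the derivative on $f$; this balance is precisely what pins $\kappa_1$ to $(0,1)$ (with $\kappa_1>0$ also needed to sum the high-frequency tail of $f$, and $\kappa_2\le 0$, $\kappa_1+\kappa_2\ge 0$ for the tails of $g$). Everything else is routine accounting of which dyadic blocks interact, combined with Lemma~\ref{Lem28}, H\"older's inequality, and the uniform boundedness of $\cR_j$ on $L^p$.
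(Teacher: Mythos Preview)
The paper does not give its own proof of this lemma; it simply cites \cite[Lemma~2.3]{CZZ22}. So there is nothing to compare against directly, and your proposal should be judged on its own merits.

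Your argument is sound and follows the standard route for such commutator estimates: write $[\cR_j,f]g(x)=\int h_j(z)\,[f(x-z)-f(x)]\,g(x-z)\,\dif z$, split $f$ (and then $g$) into low and high frequencies relative to $j$, use the mean-value theorem on the smooth part $S_{j-1}f$ to extract the crucial factor $\int |z|\,|h_j(z)|\,\dif z\lesssim 2^{-j}$, and handle the high-frequency tails by H\"older plus the obvious $\ell^1$ summations (geometric when $\kappa_1+\kappa_2>0$, H\"older in the block index with exponents $q_1,q_2$ when $\kappa_1+\kappa_2=0$). Each step is correct and you have identified precisely where the constraints $\kappa_1\in(0,1)$, $\kappa_2\le 0$, $\kappa_1+\kappa_2\ge 0$ enter. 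This is essentially the argument one would expect to find in \cite{CZZ22}; your write-up is a perfectly acceptable proof.
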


The following lemma is from \cite[Lemma 2.1]{GIP15}.
\begin{lemma}\label{0116Lem02}
\begin{itemize}
  \item [(i)] For $s>0$ and  $f,g\in \bC^{s}$, it holds that
  \begin{equation}\label{0306:02}
  \|fg\|_{\bC^s}\lesssim_C \|f\|_{\bC^s}\|g\|_{\bC^s}.
  \end{equation}
  \item [(ii)] For $s,\epsilon>0$ and  $f\in \bC^{-s}$,
  $g\in \bC^{s+\epsilon}$, it holds that
\begin{equation}\label{0306:03}
\|fg\|_{\bC^{-s}}
\lesssim \|f\|_{\bC^{-s}}\|g\|_{\bC^{s+\epsilon}}.
\end{equation}
\end{itemize}
\end{lemma}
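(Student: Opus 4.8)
The plan is to prove both estimates by Bony's paraproduct decomposition together with a direct Littlewood--Paley computation, as in \cite{GIP15, BCD11}. Write $fg=T_fg+T_gf+R(f,g)$, where $T_fg:=\sum_{j\ge 1}\big(\sum_{k\le j-2}\cR_k f\big)\,\cR_j g$ is the low--high paraproduct and $R(f,g):=\sum_{j\ge -1}\cR_j f\,\widetilde{\cR}_j g$ is the resonant term, with $\widetilde{\cR}_j=\cR_{j-1}+\cR_j+\cR_{j+1}$ (and $\cR_{-2}:=0$) as in Section~\ref{BS}. The whole argument rests on two standard facts. (a) Block bounds: $\|\cR_j h\|_\infty\le 2^{-js}\|h\|_{B^s_{\infty,\infty}}$, and, by summing a geometric series in $k$, $\big\|\sum_{k\le j-2}\cR_k h\big\|_\infty\lesssim\|h\|_{B^s_{\infty,\infty}}$ if $s>0$ while $\big\|\sum_{k\le j-2}\cR_k h\big\|_\infty\lesssim 2^{js}\|h\|_{B^s_{\infty,\infty}}$ if $s<0$. (b) Frequency localization: the Fourier transform of $\big(\sum_{k\le j-2}\cR_k f\big)\,\cR_j g$ is supported in an annulus $\{|\xi|\sim 2^j\}$, whereas that of $\cR_j f\,\widetilde{\cR}_j g$ is supported merely in a ball $\{|\xi|\lesssim 2^j\}$. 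Consequently $\cR_k(T_fg)$ is a sum of only $O(1)$ blocks with $|j-k|\le N$, while $\cR_k R(f,g)=\sum_{j\ge k-N}\cR_k(\cR_j f\,\widetilde{\cR}_j g)$ carries an infinite tail; since $\|\cR_k(\cdot)\|_\infty\le\|\cdot\|_\infty$, everything reduces to summing scalar geometric series.

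For the two paraproducts I would argue as follows. In case (i) ($f,g\in\bC^s$, $s>0$): $\|\cR_k(T_fg)\|_\infty\lesssim\sum_{|j-k|\le N}\|f\|_{\bC^s}\,2^{-js}\|g\|_{\bC^s}\lesssim 2^{-ks}\|f\|_{\bC^s}\|g\|_{\bC^s}$, so $\|T_fg\|_{B^s_{\infty,\infty}}\lesssim\|f\|_{\bC^s}\|g\|_{\bC^s}$, and $T_gf$ is handled symmetrically. In case (ii) ($f\in\bC^{-s}$, $g\in\bC^{s+\epsilon}$, $s,\epsilon>0$): for $T_fg$ use $\big\|\sum_{k\le j-2}\cR_k f\big\|_\infty\lesssim 2^{js}\|f\|_{\bC^{-s}}$ and $\|\cR_j g\|_\infty\lesssim 2^{-j(s+\epsilon)}\|g\|_{\bC^{s+\epsilon}}$ to get $\|\cR_k(T_fg)\|_\infty\lesssim 2^{-k\epsilon}\|f\|_{\bC^{-s}}\|g\|_{\bC^{s+\epsilon}}$, hence $\|T_fg\|_{\bC^{-s}}=\sup_{k\ge-1}2^{-ks}\|\cR_k(T_fg)\|_\infty\lesssim\|f\|_{\bC^{-s}}\|g\|_{\bC^{s+\epsilon}}$ because $s+\epsilon>0$; for $T_gf$ use $\big\|\sum_{k\le j-2}\cR_k g\big\|_\infty\lesssim\|g\|_{\bC^{s+\epsilon}}$ (as $s+\epsilon>0$) and $\|\cR_j f\|_\infty\lesssim 2^{js}\|f\|_{\bC^{-s}}$ to get $\|\cR_k(T_gf)\|_\infty\lesssim 2^{ks}\|f\|_{\bC^{-s}}\|g\|_{\bC^{s+\epsilon}}$, and again $\|T_gf\|_{\bC^{-s}}\lesssim\|f\|_{\bC^{-s}}\|g\|_{\bC^{s+\epsilon}}$.

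The resonant term $R(f,g)$ is where the hypotheses genuinely enter, and the only place where a truly infinite sum must be tamed. From $\|\cR_j f\,\widetilde{\cR}_j g\|_\infty\le\|\cR_j f\|_\infty\|\widetilde{\cR}_j g\|_\infty$ one gets $\lesssim 2^{-2js}\|f\|_{\bC^s}\|g\|_{\bC^s}$ in case (i) and $\lesssim 2^{js}2^{-j(s+\epsilon)}\|f\|_{\bC^{-s}}\|g\|_{\bC^{s+\epsilon}}=2^{-j\epsilon}\|f\|_{\bC^{-s}}\|g\|_{\bC^{s+\epsilon}}$ in case (ii). Summing over $j\ge k-N$ --- a convergent geometric series precisely because the total regularity exponent $2s$ (resp.\ $\epsilon$) is strictly positive --- yields $\|\cR_k R(f,g)\|_\infty\lesssim 2^{-2ks}\|f\|_{\bC^s}\|g\|_{\bC^s}$ (resp.\ $\lesssim 2^{-k\epsilon}\|f\|_{\bC^{-s}}\|g\|_{\bC^{s+\epsilon}}$). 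Multiplying by $2^{ks}$, resp.\ by $2^{-ks}$, and taking $\sup_{k\ge-1}$ gives $\|R(f,g)\|_{B^s_{\infty,\infty}}\lesssim\|f\|_{\bC^s}\|g\|_{\bC^s}$ (in fact $R(f,g)\in\bC^{2s}$) and $\|R(f,g)\|_{\bC^{-s}}\lesssim\|f\|_{\bC^{-s}}\|g\|_{\bC^{s+\epsilon}}$ (in fact $R(f,g)\in\bC^{\epsilon}$). Adding the three pieces and using $\|\cdot\|_{\bC^s}\simeq\|\cdot\|_{B^s_{\infty,\infty}}$ for $s>0$, $s\notin\N$ (and the inclusion $\|\cdot\|_{B^s_{\infty,\infty}}\lesssim\|\cdot\|_{\bC^s}$ in general), completes \eqref{0306:02} and \eqref{0306:03}.

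I do not expect a serious obstacle: this is a classical paraproduct estimate. The only delicate point is the resonant term --- one must exploit that its building blocks have Fourier support in a ball rather than an annulus, so that applying $\cR_k$ picks up all high frequencies $j\ge k-N$, and the summability of that tail is exactly what forces ``the sum of the two regularities is positive'' (automatic in (i), since $s+s>0$, and encoded in (ii) by the gap $\epsilon>0$). A minor bookkeeping point is to keep the lowest block $j=-1$ and the partial sums $\sum_{k\le j-2}\cR_k$ straight when the regularity index is negative, but these only affect harmless $O(1)$ constants.
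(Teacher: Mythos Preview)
The paper does not give its own proof of this lemma; it simply records it as \cite[Lemma~2.1]{GIP15}. Your proposal reproduces precisely the standard Bony paraproduct argument that underlies that reference, and it is correct.

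One harmless slip: in your block bound (a) for negative regularity the exponent should be $2^{-js}$, not $2^{js}$ (if $h\in B^s_{\infty,\infty}$ with $s<0$, then $\|\cR_k h\|_\infty\le 2^{-ks}\|h\|$ and $\sum_{k\le j-2}2^{-ks}\sim 2^{-js}$). You nonetheless apply it correctly in case~(ii), since there the regularity index of $f$ is $-s$ and $2^{-j(-s)}=2^{js}$. A second cosmetic point: your argument yields $\|fg\|_{B^s_{\infty,\infty}}$ on the left in~(i), which for integer $s$ is weaker than $\|fg\|_{\bC^s}$; but for $s\in\mN$ the estimate follows trivially from the Leibniz rule, so this is not a genuine gap.
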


\subsection{$\alpha$-stable processes}

Fix $\alpha\in(0,2)$.
Let $\{L_t^{(\alpha)},t\geq0\}$ be a rotationally invariant and symmetric $\alpha$-stable process
whose generator is defined by
\begin{equation*}
\sL\phi(x):=\int_{\R^d}\left(\phi(x+z)-\phi(x)-z^{(\alpha)}\cdot\nabla\phi(x)\right)\nu(\dif z),
\quad \phi\in\sS(\R^d),
\end{equation*}
where $z^{(\alpha)}:=z\b1_{\{\alpha\in[1,2)\}}\b1_{\{|z|\leq1\}}$ and the L\'evy measure
$$
\nu(\dif z)=\frac{c_{d,\alpha}}{|z|^{d+\alpha}}\dif z
$$
with some specific constant $c_{d,\alpha}>0$.
Note that for any $0\leq\kappa_1<\alpha<\kappa_2$,
\begin{align*}
\int_{|z|\leq1}|z|^{\kappa_2}\nu(\dif z)+\int_{|z|>1}|z|^{\kappa_1}\nu(\dif z)<\infty.
\end{align*}
Let $N(\dif s,\dif z)$ be the associated Poisson random measure defined by
\begin{equation*}
N((0,t]\times A):=\sum_{s\in[0,t]}\b1_A\left(L_s-L_{s-}\right)
\end{equation*}
for all $A\in\sB(\R^d\setminus\{0\})$ and $t>0$. Then it follows from L\'evy--It\^o's
decomposition (see e.g. \cite[Theorem 19.2]{Sato99}) that
\begin{equation*}
L_t^{(\alpha)}=\int_0^t\int_{|z|\leq1}z\widetilde{N}(\dif s,\dif z)
+\int_0^t\int_{|z|>1}zN(\dif s,\dif z),
\end{equation*}
where $\widetilde{N}(\dif s,\dif z):=N(\dif s,\dif z)-\nu(\dif z)\dif s$ is
the compensated Poisson random measure.

\section{Schauder's estimate for nonlocal PDEs}\label{SE}

In order to prove Schauder's estimate \eqref{SchM0124},
we firstly introduce the following frequency localized maximum principle
(see e.g. \cite[Lemma 3.2]{LZ22} and \cite[Lemma 4.7]{SX23})
and commutator estimate.
\begin{lemma}\label{LMP}
For any $j\geq0$, let
$$
\cA:=\left\{u\in\sS(\R^d):\supp\hat{u}\subset\{\xi:2^{j-1}\leq|\xi|\leq2^{j+1}\}\right\}.
$$
Under {\bf(H$_{\sigma}$)}, there exists a constant $C_1>0$ such that for all $j$ and $u\in\cA$,
\begin{equation*}
\inf_{x\in J(u)}\left\{\sgn(u(x))\cdot\left(-\sL_\sigma u(x)\right)\right\}
\geq C_12^{\alpha j}\|u\|_{\infty},
\end{equation*}
where $J(u):=\{x:|u(x)|=\|u\|_{\infty}\}$.
\end{lemma}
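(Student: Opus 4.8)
\textbf{Proof proposal for Lemma~\ref{LMP}.} The plan is to localize to a single dyadic block and exploit the nondegeneracy of $\sigma$ through a change of variables that turns $\sL_\sigma$ into a comparison with the pure $\alpha$-stable operator $\sL$, for which the frequency-localized lower bound is classical. Fix $j\ge0$ and $u\in\cA$, and let $x_0\in J(u)$; without loss of generality assume $u(x_0)=\|u\|_\infty>0$. Since $x_0$ is a global maximum of $u$, we have $\nabla u(x_0)=0$, so the drift-correction term in $\sL_\sigma$ drops out at $x_0$, and
\begin{equation*}
-\sL_\sigma u(x_0)=\int_{\R^d}\bigl(u(x_0)-u(x_0+\sigma(t,x_0)z)\bigr)\,\nu(\dif z).
\end{equation*}
The integrand is nonnegative since $u(x_0)=\|u\|_\infty$. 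First I would substitute $w=\sigma(t,x_0)z$: writing $A:=\sigma(t,x_0)$, which by {\bf(H$_\sigma$)} satisfies $\Lambda_1^{-1/2}|\xi|\le|A\xi|\le\Lambda_1^{1/2}|\xi|$ uniformly in $t,x_0$, the rotational-invariance form of $\nu$ gives $\nu(A^{-1}\dif w)=c_{d,\alpha}|\det A^{-1}|\,|A^{-1}w|^{-d-\alpha}\dif w$, and the two-sided bound on $A$ yields $|A^{-1}w|^{-d-\alpha}\ge \Lambda_1^{-(d+\alpha)/2}|w|^{-d-\alpha}$ while $|\det A^{-1}|\ge\Lambda_1^{-d/2}$. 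Hence there is $c_2=c_2(d,\alpha,\Lambda_1)>0$ with
\begin{equation*}
-\sL_\sigma u(x_0)\ \ge\ c_2\int_{\R^d}\bigl(u(x_0)-u(x_0+w)\bigr)\,\frac{\dif w}{|w|^{d+\alpha}}\ =\ c_2\,\bigl(-\sL u(x_0)\bigr),
\end{equation*}
again using $\nabla u(x_0)=0$ to discard the compensator in $\sL$.

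It then remains to prove the bound for the \emph{constant-coefficient} operator $\sL$: there is $C_3=C_3(d,\alpha)>0$ such that $-\sL u(x_0)\ge C_3 2^{\alpha j}\|u\|_\infty$ for all $u\in\cA$. Here I would pass to Fourier space. Since $\sL$ is the generator of the isotropic $\alpha$-stable process, $\widehat{\sL u}(\xi)=-c_0|\xi|^\alpha\hat u(\xi)$ for a constant $c_0>0$, so $-\sL u=(-\Delta)^{\alpha/2}u$ up to the constant $c_0$. Writing $u=\widetilde\cR_j u$ (using \eqref{0123:05}) and inserting a smooth radial cutoff $\psi$ equal to $|\xi|^{\alpha}$ on the annulus $\{2^{j-1}\le|\xi|\le2^{j+1}\}$, one gets $(-\Delta)^{\alpha/2}u=2^{\alpha j}\,(\Phi_j * u)$ where $\Phi_j(x)=2^{jd}\Phi_0(2^j x)$ and $\Phi_0=\cF^{-1}(\psi(2^{-j}\cdot)2^{-\alpha j})$ is a fixed Schwartz function independent of $j$; in particular $\|\Phi_j\|_{L^1}=\|\Phi_0\|_{L^1}=:c_4<\infty$. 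Evaluating at $x_0$,
\begin{equation*}
-\sL u(x_0)=c_0\,2^{\alpha j}\int_{\R^d}\Phi_0(y)\,u(x_0-2^{-j}y)\,\dif y .
\end{equation*}
The obstacle is that this only gives $|\sL u(x_0)|\le c_0 c_4 2^{\alpha j}\|u\|_\infty$, the wrong direction; to get a lower bound one must use that $x_0$ is a maximum. I would instead argue directly from the real-space representation $-\sL u(x_0)=\int(u(x_0)-u(x_0+w))|w|^{-d-\alpha}\dif w$: split into $|w|\le 2^{-j}$ and $|w|>2^{-j}$. On the far region, $\int_{|w|>2^{-j}}(u(x_0)-u(x_0+w))|w|^{-d-\alpha}\dif w\ge \|u\|_\infty\int_{|w|>2^{-j}}|w|^{-d-\alpha}\dif w - \int_{|w|>2^{-j}}u(x_0+w)|w|^{-d-\alpha}\dif w$; the first piece is $\ge c_5 2^{\alpha j}\|u\|_\infty$, and the second is controlled because $u=\widetilde\cR_j u$ is spectrally localized, so $\|u\|_{L^\infty}$ aside, one uses that $\int u(x_0+w)|w|^{-d-\alpha}\b1_{|w|>2^{-j}}\dif w$ can be rewritten via the kernel bound $\int_{\R^d}|w|^\theta|\nabla^k h_j(w)|\dif w\lesssim 2^{(k-\theta)j}$ from \eqref{0301:01} after expressing $u=h_j*\widetilde{\widetilde{\cR}}_j u$; a Taylor expansion of $u$ near $x_0$ using $\nabla u(x_0)=0$ and the Bernstein bound $\|\nabla^2 u\|_\infty\lesssim 2^{2j}\|u\|_\infty$ from Lemma~\ref{Lem28} shows the near region $\int_{|w|\le 2^{-j}}(u(x_0)-u(x_0+w))|w|^{-d-\alpha}\dif w$ is $O(2^{\alpha j}\|u\|_\infty)$ as well (and nonnegative). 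Choosing $j$-independent constants and combining shows $-\sL u(x_0)\ge C_3 2^{\alpha j}\|u\|_\infty$.

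Combining the change-of-variables step with the constant-coefficient estimate gives $\sgn(u(x_0))\cdot(-\sL_\sigma u(x_0))\ge c_2 C_3\,2^{\alpha j}\|u\|_\infty$, and setting $C_1:=c_2 C_3$ finishes the proof since $x_0\in J(u)$ was arbitrary. The main technical obstacle is the lower bound for the constant-coefficient operator on a dyadic block: one cannot read it off from a naive $L^1$-kernel estimate and must genuinely combine the maximum-point property ($\nabla u(x_0)=0$, $u(x_0)=\|u\|_\infty$) with the spectral localization of $u$ and the Bernstein/kernel estimates already recorded in the excerpt. (In practice this lemma is cited from \cite{LZ22,SX23}, so an alternative and cleaner route is simply to invoke those references after the change-of-variables reduction to $\sigma=\mI$.)
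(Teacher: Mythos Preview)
Your reduction step is correct and is exactly what the paper does: at a maximum point $x_0$ one has $\nabla u(x_0)=0$, and the change of variables $w=\sigma(t,x_0)z$ together with {\bf(H$_\sigma$)} yields $-\sL_\sigma u(x_0)\ge c_2\bigl(-\sL u(x_0)\bigr)$ with $c_2=c_2(d,\alpha,\Lambda_1)$. The paper then immediately invokes \cite[Lemma~3.2]{LZ22} (or \cite[Lemma~4.7]{SX23}) for the constant-coefficient lower bound, precisely the ``cleaner route'' you mention at the end. So via that route your proposal matches the paper.

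Your attempted self-contained proof of $-\sL u(x_0)\ge C_3 2^{\alpha j}\|u\|_\infty$, however, has a genuine gap. In the far-region split you need
\[
\int_{|w|>2^{-j}} u(x_0+w)\,|w|^{-d-\alpha}\,\dif w \ \le\ (1-\eta)\,c_5\,2^{\alpha j}\|u\|_\infty
\]
for some \emph{fixed} $\eta>0$ independent of $u$ and $j$; the trivial bound $|u|\le\|u\|_\infty$ only gives $\le c_5\,2^{\alpha j}\|u\|_\infty$, which cancels exactly against the first piece. The vague appeal to ``rewriting via the kernel bound \eqref{0301:01} after expressing $u=h_j*\widetilde{\widetilde\cR}_j u$'' does not produce such a gap: those bounds control sizes, not the needed cancellation. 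The proofs in \cite{LZ22,SX23} obtain the strict lower bound by a different mechanism (exploiting the annular spectral support more carefully), and your sketch does not reproduce it. Since you already identify the citation as the clean fix, this is not fatal to the proposal, but the direct argument as written would not close.
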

\begin{proof}
By \eqref{0414:03} and the change of variable,
we can rewrite the operator $\sL_\sigma$ as follows:
\begin{align*}
\sL_{\sigma}u(x)=\int_{\R^d}\left(u(x+z)-u(x)-z^{(\alpha)}\cdot\nabla u(x)\right)
\kappa(t,x,z)\dif z,
\end{align*}
where
$$
\kappa(t,x,z):=\left(|{\rm det}\sigma(t,x)||\sigma^{-1}(t,x)z|^{d+\alpha}\right)^{-1}.
$$
By {\bf(H$_{\sigma}$)}, there exists a constant $\kappa_0>0$ such that
for all $(t,x,z)\in[0,T]\times\R^{2d}$,
$$
\kappa(t,x,z)\geq \kappa_0/|z|^{d+\alpha}.
$$
Let $x_0\in J(u)$. Without loss of generality, we assume $u(x_0)=\|u\|_{\infty}>0$. Since $\nabla u(x_0)=0$, we have
\begin{align*}
\sL_\sigma u(x_0)
=\int_{\R^d}\left(u(x_0+z)-u(x_0)\right)
\kappa(t,x_0,z)\dif z
\leq \int_{\R^d}\left( u(x_0+z)-u(x_0)\right)\frac{\kappa_0}{|z|^{d+\alpha}}\dif z.
\end{align*}
By \cite[Lemma 3.2]{LZ22} (or \cite[Lemma 4.7]{SX23}), we conclude the proof.
\end{proof}

\begin{lemma}\label{Lem0329}
Under {\bf(H$_{\sigma}$)}, for any $\kappa\in[-1,0)$, there is a constant
$C=C(d,p,\alpha,\kappa,\Lambda_1)>0$
such that for any $\varphi\in\sS(\R^d)$, $j\geq-1$ and $\epsilon>0$,
\begin{align}\label{0329:01}
\|[\cR_j,\sL_\sigma]\varphi\|_\infty
\lesssim_C 2^{-j(1+\kappa)}\|\varphi\|_{B_{\infty,\infty}^{\alpha+\kappa+\epsilon}}.
\end{align}
\end{lemma}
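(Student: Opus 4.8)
The idea is to decompose $\sL_\sigma$ into a continuous superposition of one–dimensional directional $\alpha$–stable operators with bounded Lipschitz coefficients, and then invoke the scalar commutator estimate \eqref{ComEs}. Performing the substitution $w=\sigma(t,x)z$ in \eqref{0414:03} and passing to polar coordinates $w=r\theta$ (using that the drift correction integrates to zero against the even kernels, so that no spurious term is produced), one obtains
\[
\sL_\sigma\varphi(x)=\tfrac{c_{d,\alpha}}{2}\int_{\mathbb S^{d-1}}a(t,x,\theta)\,(I_\theta\varphi)(x)\,\varsigma(\dif\theta),
\]
where $\varsigma$ is the surface measure on $\mathbb S^{d-1}$,
\[
a(t,x,\theta):=\frac{|\det\sigma(t,x)^{-1}|}{|\sigma(t,x)^{-1}\theta|^{d+\alpha}},\qquad
I_\theta\psi(x):=\int_{\mathbb R}\Big(\psi(x+r\theta)-\psi(x)-r\theta\b1_{\{\alpha\geq1\}}\b1_{\{|r|\leq1\}}\cdot\nabla\psi(x)\Big)\frac{\dif r}{|r|^{1+\alpha}},
\]
and $I_\theta$ is the Fourier multiplier with the real, nonnegative symbol $c_\alpha|\theta\cdot\xi|^\alpha$. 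Two elementary facts will be used. First, under {\bf(H$_\sigma$)} the coefficient is uniformly regular: $\sup_{t\geq0,\,\theta\in\mathbb S^{d-1}}\|a(t,\cdot,\theta)\|_{\bC^{1-\epsilon}}\lesssim_C1$ for every $\epsilon\in(0,1)$, since $\sigma$ is invertible with bounds and $|\nabla_x\sigma|\leq\Lambda_1$. Second, $I_\theta$ is of order $\alpha$ uniformly in $\theta$: splitting the $r$–integral at $|r|=2^{-j}$ and Taylor expanding gives $\|I_\theta\cR_j\psi\|_\infty\lesssim 2^{j\alpha}\|\cR_j\psi\|_\infty$ (only the trivial bound $|\theta\cdot\xi|^\alpha\leq|\xi|^\alpha$ enters, which is what makes it uniform in $\theta$), and since $\cR_jI_\theta\cR_i=0$ for $|i-j|\geq2$ one has $\cR_jI_\theta\psi=\cR_jI_\theta\widetilde\cR_j\psi$, so this upgrades to $\|I_\theta\psi\|_{B^{s}_{\infty,\infty}}\lesssim\|\psi\|_{B^{s+\alpha}_{\infty,\infty}}$ for every $s\in\mathbb R$, uniformly in $\theta$.

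Because $I_\theta$ commutes with $\cR_j$, the superposition formula yields
\[
[\cR_j,\sL_\sigma]\varphi=\tfrac{c_{d,\alpha}}{2}\int_{\mathbb S^{d-1}}[\cR_j,a(t,\cdot,\theta)]\,(I_\theta\varphi)\,\varsigma(\dif\theta),
\]
so it suffices to estimate $\|[\cR_j,a(t,\cdot,\theta)](I_\theta\varphi)\|_\infty$ uniformly in $\theta$ and integrate. Moreover it is enough to prove \eqref{0329:01} for $\epsilon\in(0,1\wedge(-\kappa))$, since for larger $\epsilon$ it follows from $B^{\alpha+\kappa+\epsilon}_{\infty,\infty}\hookrightarrow B^{\alpha+\kappa+\epsilon'}_{\infty,\infty}$ while the exponent $2^{-j(1+\kappa)}$ is unchanged. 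Fix such an $\epsilon$ and apply \eqref{ComEs} with $p_1=p_2=\infty$, $\kappa_1:=1-\epsilon\in(0,1)$ and $\kappa_2:=\kappa+\epsilon$, for which $\kappa_2\in[-\kappa_1,0]$ and $\kappa_1+\kappa_2=1+\kappa\geq0$; this gives
\[
\|[\cR_j,a(t,\cdot,\theta)](I_\theta\varphi)\|_\infty\lesssim 2^{-j(1+\kappa)}\,\|a(t,\cdot,\theta)\|_{B^{1-\epsilon}_{\infty,q_1}}\,\|I_\theta\varphi\|_{B^{\kappa+\epsilon}_{\infty,\infty}},
\]
with $q_1=\infty$ when $\kappa\in(-1,0)$ (the case $\kappa_1+\kappa_2>0$) and $q_1=1$ when $\kappa=-1$ (the borderline case $\kappa_1+\kappa_2=0$, paired with $q_2=\infty$). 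By the first fact, $\|a(t,\cdot,\theta)\|_{B^{1-\epsilon}_{\infty,q_1}}\lesssim_C1$ uniformly in $t,\theta$, the $B^{1-\epsilon}_{\infty,1}$ norm being finite precisely because $\epsilon>0$; by the second, $\|I_\theta\varphi\|_{B^{\kappa+\epsilon}_{\infty,\infty}}\lesssim\|\varphi\|_{B^{\alpha+\kappa+\epsilon}_{\infty,\infty}}$. Integrating over $\mathbb S^{d-1}$ yields \eqref{0329:01}.

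The routine verifications are: the polar–coordinate representation of $\sL_\sigma$ (in particular that the $\b1_{\{\alpha\geq1\}}$ correction survives the change of variables without producing an extra term, which holds because it integrates to zero against symmetric kernels); the band–limited bound $\|I_\theta\cR_j\psi\|_\infty\lesssim 2^{j\alpha}\|\cR_j\psi\|_\infty$ and the reduction $\cR_jI_\theta\psi=\cR_jI_\theta\widetilde\cR_j\psi$; and the uniform Lipschitz bound for $a(t,\cdot,\theta)$ coming from {\bf(H$_\sigma$)}. The only genuine obstacle is the $\theta$–uniformity: the symbol $|\theta\cdot\xi|^\alpha$ degenerates on the hyperplane $\theta^{\perp}$, so $I_\theta$ is not elliptic and the inverse bound $\|\psi\|_{B^{s+\alpha}_{\infty,\infty}}\lesssim\|I_\theta\psi\|_{B^{s}_{\infty,\infty}}$ fails; fortunately only the one–sided bound $\|I_\theta\psi\|_{B^{s}_{\infty,\infty}}\lesssim\|\psi\|_{B^{s+\alpha}_{\infty,\infty}}$ is needed, which is exactly what the band–limited estimate delivers.
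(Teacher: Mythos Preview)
Your proof is correct and takes a genuinely different route from the paper's. The paper expands the commutator explicitly as $\int_{\R^d}\Sigma(t,x,z)\,\nu(\dif z)$ and estimates $\Sigma$ directly by splitting into small and large $|z|$, using Taylor expansion together with moment bounds on $h_j$ (equation~\eqref{0301:01}) and the scalar commutator estimate applied to $[\cR_j,\sigma]\nabla\varphi$; the case $\alpha\in(0,1]$ is simply quoted from \cite{SX23}, and for $\alpha\in(1,2)$ two separate regimes of $\kappa$ are handled and then interpolated. Your argument instead exploits the rotational invariance of $\nu$ to factor $\sL_\sigma$ as a spherical average of $a(t,\cdot,\theta)\,I_\theta$ with $I_\theta$ a constant-coefficient Fourier multiplier; since $[I_\theta,\cR_j]=0$, the whole commutator collapses to $[\cR_j,a]\,(I_\theta\varphi)$, to which \eqref{ComEs} applies in one shot. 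This is shorter, treats all $\alpha\in(0,2)$ uniformly, avoids the interpolation step, and makes the role of the Lipschitz hypothesis on $\sigma$ transparent (it enters only through the $\bC^{1-\epsilon}$ regularity of $a$). The paper's kernel approach, by contrast, is more robust: it does not rely on the special polar structure of the rotationally invariant $\alpha$-stable kernel, and would adapt more readily to L\'evy measures that are merely comparable to it.
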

\begin{proof}
 When $\alpha\in(0,1]$, it was proven in \cite[Lemma 4.5]{SX23}.
Below we  show it for $\alpha\in(1,2)$.
Note that by the definitions of $\cR_j$ and $\sL_\sigma$, we have for all $j\geq-1$,
\begin{align}\label{0329:02}
[\cR_j,\sL_\sigma]\varphi(x)
&
=\int_{\R^d}h_j(x-y)\int_{\R^d}\left(\varphi(y+\sigma(t,y)z)-\varphi(y)
-\sigma(t,y)z^{(\alpha)}\cdot\nabla \varphi(y)\right)\nu(\dif z)\dif y\nonumber\\\nonumber
&\quad
-\int_{\R^d}\Bigg(\int_{\R^d}h_j(y)\varphi(x+\sigma(t,x)z-y)\dif y
-\int_{\R^d}h_j(x-y)\varphi(y)\dif y\\
&\qquad
-\sigma(t,x)z^{(\alpha)}\cdot\int_{\R^d}h_j(y)\nabla \varphi(x-y)\dif y\Bigg)\nu(\dif z)\\\nonumber
&
=:\int_{\R^d}\Sigma(t,x,z)\nu(\dif z),
\end{align}
where for $(t,x,z)\in[0,T]\times\R^{2d}$ and $\sigma_t^x(\cdot):=\sigma(t,x-\cdot)$
and $\varphi^{x}(\cdot):=\varphi(x-\cdot)$,
$$
\Sigma(t,x,z)=\int_{\R^d}h_j(y)
\Bigg(\varphi^x(y-\sigma_t^x(y)z)
-\varphi^x(y-\sigma_t^x(0)z)
+\left(\sigma_t^x(y)-\sigma_t^x(0)\right)z^{(\alpha)}
\cdot\nabla \varphi^x(y)\Bigg)\dif y.
$$
Note that for any $k\in\N$ and $\theta\geq0$
\begin{equation}\label{0728:01}
\int_{\R^d}\left|\nabla^kh_j(y)\right|\left(|y|\wedge1\right)^{\theta}\dif y
\leq \int_{\R^d}\left|\nabla^kh_j(y)\right||y|^{\theta}\dif y
\lesssim 2^{(k-\theta)j}.
\end{equation}
Hence, in view of \eqref{0728:01}
and \cite[Lemma 2.2]{CHZ20}, we have
for any $\varpi_1\in[0,1)$, $|z|\leq 2^{-j}\wedge(1/2\Lambda_1^{-1})$
and $\epsilon>0$,
\begin{align}\label{0329:04}
\left|\Sigma(t,x,z)\right|
&
=\left|\int_{\R^d}h_j(y)\Bigg(\varphi^x(y-\sigma_t^x(y)z)
-\varphi^x(y-\sigma_t^x(0)z)
+\left(\sigma_t^x(y)-\sigma_t^x(0)\right)z^{(\alpha)}
\cdot\nabla \varphi^x(y)\Bigg)\dif y\right|\nonumber\\
&
\lesssim|z|^{1+\varpi_1}\|u\|_{B_{\infty,\infty}^{\varpi_1}}\int_{\R^d}\left(|h_j(y)|
+|\nabla h_j(y)|(|y|\wedge1)\right)\dif y\\
&\quad
+|z|^{1+\varpi_1}\|u\|_{B_{\infty,\infty}^{\varpi_1}}\left(\int_{\R^d}|\nabla h_j(y)|\left(|y|\wedge1\right)^{\varpi_1}\dif y\right)^{\varpi_1}
\left(\int_{\R^d}|h_j(y)|\left(|y|\wedge1\right)^{\varpi_1}\dif y\right)^{1-\varpi_1}\nonumber\\
&
\lesssim |z|^{1+\varpi_1}\|\varphi\|_{B_{\infty,\infty}^{\varpi_1}}
= |z|^{1+\varpi_1-\alpha-\epsilon}  |z|^{\alpha+\epsilon}
\|\varphi\|_{B_{\infty,\infty}^{\varpi_1}}.\nonumber
\end{align}
Note that we also have
\begin{equation*}
\Sigma(t,x,z)
=\int_{\R^d}h_j(y)\Bigg(\varphi^{x}(y-\sigma_t^{x}(y)z)
-\varphi^{x}(y-\sigma_t^{x}(0)z)\Bigg)\dif y
-z^{(\alpha)}[\cR_j,\sigma]\nabla \varphi(x),
\end{equation*}
which implies, by {\bf(H$_{\sigma}$)}, \eqref{0301:01} and \eqref{ComEs},
that for any $\iota\in(0,1)$, $\epsilon>0$ and $|z|>2^{-j}\wedge(1/2\Lambda_1^{-1})$,
\begin{align}\label{0329:05}
\left|\Sigma(t,x,z)\right|
&
\leq\int_{\R^d}|h_j(y)|\|\varphi\|_{B_{\infty,\infty}^\iota}
\left(\|\nabla\sigma\|_\infty^\iota|yz|^\iota\wedge1\right)\dif y
+2^{-j\iota}|z|\|\sigma\|_{\bC^1}\|\varphi\|_{B_{\infty,1}^\iota}\\\nonumber
&
\lesssim 2^{-\iota j}\|\varphi\|_{B_{\infty,\infty}^{\iota+\epsilon}}\left(|z|^\iota+|z|\right).
\end{align}
Combining \eqref{0329:02}, \eqref{0329:04} and \eqref{0329:05}, and choosing
$\varpi_1=\kappa+\alpha+\epsilon$ and $\iota=\alpha+\kappa$, we have
\begin{align}\label{0429:01}
\|[\cR_j,\sL_\sigma]\varphi\|_\infty
&
\lesssim 2^{-j(1+\kappa)}\|\varphi\|_{B_{\infty,\infty}^{\alpha+\kappa+\epsilon}}
\int_{\R^d}|z|^{\alpha+\eps}\wedge(|z|^{\alpha+\kappa}+|z|)\nu(\dif z)\nonumber\\
&
\lesssim 2^{-j(1+\kappa)}\|\varphi\|_{B_{\infty,\infty}^{\alpha+\kappa+\epsilon}}
\end{align}
for any $\kappa\in[-1,1-\alpha)$ and $\epsilon>0$.

On the other hand, by \eqref{0301:01} we have for any $\varpi_2\in(\alpha-1,1)$ and $|z|<1$,
\begin{align}\label{0611:01}
\left|\Sigma(t,x,z)\right|
&
=\left|\int_{\R^d}h_j(y)\Bigg(\varphi^x(y-\sigma_t^x(y)z)-\varphi^x(y-\sigma_t^x(0)z)
+\left(\sigma_t^x(y)-\sigma_t^x(0)\right)z\cdot\nabla \varphi^x(y)\Bigg)\dif y\right|\nonumber\\
&
\lesssim\int_{\R^d}|h_j(y)|\left|\int_0^1\left[\nabla \varphi^x\left(y-\theta\sigma_t^x(y)z
-(1-\theta)\sigma_t^x(0)z\right)-\nabla \varphi^x(y)\right]
\left(\sigma_t^x(y)-\sigma_t^x(0)\right)z\dif\theta\right|\nonumber\\
&
\lesssim \int_{\R^d}|h_j(y)|\|\nabla \varphi\|_{B_{\infty,\infty}^{\varpi_2}}|z|^{1+\varpi_2}|y|\dif y\\
&
\lesssim 2^{-j}|z|^{1+\varpi_2}\|\varphi\|_{B_{\infty,\infty}^{1+\varpi_2}}.\nonumber
\end{align}
For any $\epsilon>0$,
in view of \eqref{0329:05} with $\iota=1-$ and  \eqref{0611:01}
with $\varpi_2=\alpha-1+\epsilon$, we obtain that
\begin{equation*}
\left\|[\cR_j,\sL_\sigma]\varphi\right\|_\infty
\lesssim 2^{-j\iota}\|\varphi\|_{B_{\infty,\infty}^{\alpha+\eps}}
\int_{\R^d}|z|\wedge|z|^{\alpha+\epsilon}\nu(\dif z)
\lesssim 2^{-j\iota}\|\varphi\|_{B_{\infty,\infty}^{\alpha+\eps}},
\end{equation*}
which by interpolation and \eqref{0429:01} completes the proof.
\end{proof}

By Lemmas \ref{Lem28} and \ref{Lem0329}, we have the following lemma.
For readers' convenience, we also give the proof.
\begin{lemma}\label{lem0728}
Assume that {\bf(H$_{\sigma}$)} holds. Then for any $\iota\in(-1,0]$,
there exists a constant $C=C(d,p,\alpha,\kappa,\Lambda_1)>0$ such that
for all $\varphi\in \sS(\R^d)$,
\begin{equation*}
\|\sL_{\sigma}\varphi\|_{B_{\infty,\infty}^\iota}
\lesssim_C\|\varphi\|_{B_{\infty,\infty}^{\alpha+\iota}}.
\end{equation*}
\end{lemma}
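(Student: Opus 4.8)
The plan is to estimate the Besov norm dyadic block by dyadic block. For each $j\ge -1$ write, by definition of the commutator,
\begin{equation*}
\cR_j\sL_\sigma\varphi=\sL_\sigma\cR_j\varphi+[\cR_j,\sL_\sigma]\varphi,
\end{equation*}
and bound $2^{j\iota}\|\cR_j\sL_\sigma\varphi\|_\infty$ by treating the two summands separately, then take the supremum over $j\ge-1$.

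For the first summand I would use that $\cR_j\varphi$ is spectrally localized (in an annulus of size $\sim 2^j$ for $j\ge0$, in a fixed ball for $j=-1$). The key point is that, since $\nu$ is rotationally symmetric, $\int_{|z|\le1}\sigma(t,x)z\,\nu(\dif z)=0$, so after symmetrizing $z\mapsto-z$ the operator in \eqref{0414:03} has, for every $\alpha\in(0,2)$, the second‑difference form
\begin{equation*}
\sL_\sigma g(x)=\tfrac12\int_{\R^d}\big(g(x+\sigma(t,x)z)+g(x-\sigma(t,x)z)-2g(x)\big)\,\nu(\dif z).
\end{equation*}
Applying this with $g=\cR_j\varphi$, using $|\sigma(t,x)z|\le\Lambda_1^{1/2}|z|$ from {\bf(H$_\sigma$)}, splitting the integral at $|z|=2^{-j}$, bounding the integrand by $|\sigma(t,x)z|^2\|\nabla^2\cR_j\varphi\|_\infty\lesssim_{\Lambda_1}|z|^22^{2j}\|\cR_j\varphi\|_\infty$ on $\{|z|\le2^{-j}\}$ (Bernstein's inequality, Lemma \ref{Lem28}) and by $4\|\cR_j\varphi\|_\infty$ on $\{|z|>2^{-j}\}$, and integrating $\nu$ over each region, one gets
\begin{equation*}
\|\sL_\sigma\cR_j\varphi\|_\infty\lesssim\big(2^{2j}2^{-j(2-\alpha)}+2^{\alpha j}\big)\|\cR_j\varphi\|_\infty\lesssim 2^{\alpha j}\|\cR_j\varphi\|_\infty ,
\end{equation*}
uniformly in $t,x$ (the $j=-1$ case being the same up to constants). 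Hence $2^{j\iota}\|\sL_\sigma\cR_j\varphi\|_\infty\lesssim 2^{j(\alpha+\iota)}\|\cR_j\varphi\|_\infty\le\|\varphi\|_{B^{\alpha+\iota}_{\infty,\infty}}$.

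For the commutator summand I would invoke Lemma \ref{Lem0329} with the endpoint choice $\kappa=-1$ (which is admissible there) and any $\epsilon\in(0,1+\iota)$ (nonempty since $\iota>-1$), giving
\begin{equation*}
\|[\cR_j,\sL_\sigma]\varphi\|_\infty\lesssim 2^{-j(1+\kappa)}\|\varphi\|_{B^{\alpha+\kappa+\epsilon}_{\infty,\infty}}=\|\varphi\|_{B^{\alpha-1+\epsilon}_{\infty,\infty}} .
\end{equation*}
Since $\iota\le0$ and $\alpha-1+\epsilon\le\alpha+\iota$, the embedding \eqref{1128:00} yields $2^{j\iota}\|[\cR_j,\sL_\sigma]\varphi\|_\infty\le\|\varphi\|_{B^{\alpha-1+\epsilon}_{\infty,\infty}}\lesssim\|\varphi\|_{B^{\alpha+\iota}_{\infty,\infty}}$. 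Adding the two bounds and taking the supremum over $j$ gives the claimed estimate, with a constant depending only on $d,p,\alpha,\iota,\Lambda_1$.

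The step I expect to be the only delicate one is the clean bound $\|\sL_\sigma\cR_j\varphi\|_\infty\lesssim 2^{\alpha j}\|\cR_j\varphi\|_\infty$ without a logarithmic loss at the borderline $\alpha=1$: a naive split keeping the $z^{(\alpha)}\cdot\nabla$ correction term produces a stray $\log$ on the range $2^{-j}<|z|\le1$. Using the symmetrized second‑difference representation removes this, because it makes the small‑$z$ contribution absolutely integrable for all $\alpha<2$ and eliminates the correction term altogether; the $x$‑dependence of the shift $\sigma(t,x)z$ is harmless since the pointwise bounds on the second difference only see the length of the shift, which is controlled by {\bf(H$_\sigma$)}.
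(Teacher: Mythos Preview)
Your proof is correct and follows essentially the same route as the paper: decompose $\cR_j\sL_\sigma\varphi=\sL_\sigma\cR_j\varphi+[\cR_j,\sL_\sigma]\varphi$, control the first piece by Bernstein to get the bound $\|\sL_\sigma\cR_j\varphi\|_\infty\lesssim 2^{\alpha j}\|\cR_j\varphi\|_\infty$, and control the commutator via Lemma~\ref{Lem0329}. The only cosmetic difference is that the paper splits at $|z|=1$ and uses the elementary interpolation $(a\wedge b)\le a^{\alpha/2}b^{1-\alpha/2}$, whereas you symmetrize and split at $|z|=2^{-j}$; your symmetrization is a clean way to treat all $\alpha\in(0,2)$ at once and, as you note, sidesteps any log-loss worry at $\alpha=1$.
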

\begin{proof}
Let
$$
\Sigma_1(t,x,z):=\cR_j\varphi(x+\sigma(t,x)z)-\cR_j\varphi(x),\quad
\Sigma_2(t,x,z):=\Sigma_1(t,x,z)-\sigma(t,x)z\cdot\nabla\cR_j\varphi(x).
$$
If $\alpha\in(1,2)$, then by Taylor’s expansion and Lemma \ref{Lem28} we have
\begin{align}\label{0728:02}
\left|\sL_\sigma\cR_j\varphi(x)\right|
&
=\left|\int_{|z|\leq1}\Sigma_2(t,x,z)\nu(\dif z)
+\int_{|z|>1}\Sigma_1(t,x,z)\nu(\dif z)\right|\no\\
&
\lesssim \|\sigma\|_{L_T^\infty} \int_{|z|\leq1}\left(\|\nabla^2\cR_j\varphi\|_\infty|z|^2\right)
\wedge \|\cR_j\varphi\|_\infty\nu(\dif z)
+\int_{|z|>1}\|\cR_j\varphi\|_\infty\nu(\dif z)\\
&
\lesssim \left(\|\sigma\|_{L_T^\infty}+1\right)\|\cR_j\varphi\|_\infty
\left(2^{j\alpha}\int_{|z|\leq1}|z|^{\alpha}\nu(\dif z)
+\int_{\R^d}1\nu(\dif z)\right)\no\\
&
\lesssim \left(\|\sigma\|_{L_T^\infty}+1\right)2^{j\alpha}\|\cR_j\varphi\|_\infty.\no
\end{align}
Similarly, we also obtain that \eqref{0728:02} holds when $\alpha\in(0,1]$.
Combining \eqref{0728:02} and Lemma \ref{Lem0329}, we have for any
$\iota\in(-1,0]$,
\begin{equation*}
\|\sL_{\sigma}\varphi\|_{B_{\infty,\infty}^\iota}
\lesssim\|\varphi\|_{B_{\infty,\infty}^{\alpha+\iota}}.
\end{equation*}
\end{proof}

Now we are in the position to prove Theorem \ref{SchE24}.
\begin{proof}[Proof of Theorem \ref{SchE24}]
By standard mollifying approximation, it suffices to show
the a priori estimate \eqref{SchM0124}.
For simplicity, we omit the subscript $\lambda$ of $u_\lambda$ below.
Since for each $t\in[0,T]$ and $j$,
$\cR_j u(t,\cdot)\in\sS(\R^d)$, there exists an $x_{t,j}\in\R^d$ such that
$$|\cR_j u(t,x_{t,j})|=\|\cR_ju(t,\cdot)\|_\infty.$$
Without loss of generality we assume
$\cR_j u(t,x_{t,j})=\|\cR_ju(t,\cdot)\|_\infty$.
By \cite[Lemma 3.2]{WZ11}, one has
$$
\p_t\|\cR_ju(t)\|_\infty=(\p_t\cR_ju)(t,x_{t,j})
$$
Now, using the operator $\cR_j$ to act on both sides of \eqref{Pareq24}, and by Lemma \ref{LMP}, we get
\begin{align}
\p_t\|\cR_ju(t)\|_\infty
&
=\sL_\sigma\cR_ju(t,x_{t,j})-\lambda\cR_ju(t,x_{t,j})+[\cR_j,\sL_\sigma]u(t,x_{t,j})\nonumber\\
&\quad
+\cR_j(g\cdot\nabla u)(t,x_{t,j})+\cR_jf(t,x_{t,j})\no\\
&\leq -(C_1 2^{\alpha j}+\lambda)\|\cR_ju(t)\|_\infty+\|[\cR_j,\sL_\sigma]u(t)\|_\infty\no\\
&\qquad+\|\cR_jf(t)\|_\infty+\cR_j(g\cdot\nabla u)(t,x_{t,j}),
\label{0310:01}
\end{align}
which by  Gronwall's inequality implies that
\begin{align}\label{0310:02}
\|\cR_ju(t)\|_\infty
&
\lesssim\int_0^t\e^{-(C_12^{j\alpha}+\lambda)(t-s)}\left(\|[\cR_j,\sL_\sigma]u(s)\|_\infty
+\|\cR_jf(s)\|_{\infty}\right)\dif s\nonumber\\
&\quad
+\int_0^t\e^{-(C_12^{j\alpha}+\lambda)(t-s)}|\cR_j(g\cdot\nabla u)(s,x_{s,j})|\dif s=:\sI_1+\sI_2.
\end{align}
Let $\vartheta\in(-\infty,1)$ and $\epsilon>0$. By \eqref{0329:01} with
$\kappa=\vartheta\vee 0-1$, one sees that
\begin{align*}
\|[\cR_j,\sL_\sigma]u\|_\infty
\lesssim 2^{-(\vartheta\vee 0) j}\|u(s)\|_{B_{\infty,\infty}^{\alpha-1+\vartheta\vee 0+\epsilon}}
\lesssim 2^{-\vartheta j}\|u(s)\|_{B_{\infty,\infty}^{\alpha-1+\vartheta\vee 0+\epsilon}},
\end{align*}
and by definition of $\bC^\vartheta$,
$$
\|\cR_jf(s)\|_{\infty}\leq 2^{-j\vartheta}\|f(s)\|_{\bC^\vartheta}.
$$
Therefore,
\begin{align}\label{0108:03}
\|\cR_ju(t)\|_\infty
\lesssim2^{-j\vartheta}\int_0^t\e^{-(C_12^{j\alpha}+\lambda)(t-s)}
\Big(\|u(s)\|_{B_{\infty,\infty}^{\alpha-1+\vartheta\vee 0+\epsilon}}
+\|f(s)\|_{\bC^\vartheta}\Big)\dif s+\sI_2.
\end{align}
Now let us treat $\sI_2$. When $\beta>0$,  using the fact
$$
\cR_j\nabla u(t,x_{t,j})=\nabla \cR_ju(t,x_{t,j})=0,
$$
for any $\vartheta\in(-\infty,\beta)$ and  $\epsilon>0$,
by \eqref{ComEs} with $\kappa_1=\beta$ and $\kappa_2=(\vartheta\vee0)-\beta$, we have
\begin{align*}
|\cR_j(g\cdot\nabla u)(s,x_{s,j})|=|[\cR_j,g]\nabla u(s,x_{s,j})|\lesssim 2^{-(\vartheta\vee0)j}\|g\|_{L^\infty_T(\bC^\beta)}
\|u(s)\|_{B_{\infty,\infty}^{1-\beta+(\vartheta\vee0)+\epsilon}}.
\end{align*}
When $\beta\le0$, for any $\vartheta\le\beta$, by \eqref{0306:03} we have
\begin{align*}
|\cR_j(g\cdot\nabla u)(s,x_{s,j})|
\le 2^{-\vartheta j}\|g\cdot\nabla u(s)\|_{\bC^{\vartheta}}
\lesssim 2^{-\vartheta j}\|g\cdot\nabla u(s)\|_{\bC^{\beta}}
\lesssim 2^{-\vartheta j}\|g\|_{L^\infty_T(\bC^\beta)}\|u(s)\|_{B_{\infty,\infty}^{1-\beta+\epsilon}}.
\end{align*}
Thus, for any $\vartheta\le \beta<1$, letting $\delta:=(\alpha-1)\vee(1-\beta)+(\vartheta\vee0)$,
by \eqref{0108:03} we have
\begin{align}\label{0430:00}
\|\cR_ju(t)\|_\infty
\lesssim2^{-j\vartheta}\int_0^t\e^{-(C_12^{j\alpha}+\lambda)(t-s)}
\Big(\|u(s)\|_{B_{\infty,\infty}^{\delta+\epsilon}}+\|f(s)\|_{\bC^\vartheta}\Big)\dif s.
\end{align}
Since for any $\eta\in[0,\alpha]$,
 $$
  \int_0^t\e^{-(C_12^{j\alpha}+\lambda)(t-s)}\dif s\lesssim \left[2^{-j\alpha}\wedge(1+\lambda)^{-1}\right]\le (1+\lambda)^{-\frac{\alpha-\eta}{\alpha}}2^{-j(\vartheta+\eta)},
  $$
taking supremum for $j$ in \eqref{0430:00}, we get
\begin{align}\label{0430:01}
\|u\|_{L^\infty_T(B_{\infty,\infty}^{\vartheta+\eta})}\lesssim (1+\lambda)^{-\frac{\alpha-\eta}{\alpha}}\left(\|u\|_{L^\infty_T(B_{\infty,\infty}^{\delta+\epsilon})}
+\|f\|_{L^\infty_T(\bC^\vartheta)}\right).
\end{align}
On the other hand, noting that for any $\epsilon\in(0,\alpha)$,
$$
\e^{-C_12^{j\alpha}(t-s)}\lesssim 2^{-j(\alpha-\epsilon)}(t-s)^{-\frac{\alpha-\epsilon}{\alpha}},
$$
by \eqref{0430:00}, we have for any $\epsilon\in(0,\alpha)$,
\begin{align*}
\|\cR_ju(t)\|_\infty
&
\lesssim 2^{-j\vartheta}\int_0^t\e^{-C_12^{j\alpha}(t-s)}
\|u(s)\|_{B_{\infty,\infty}^{\delta+\epsilon}}\dif s
+2^{-(\alpha+\vartheta)j}\|f\|_{L^\infty_T(\bC^\vartheta)}\\
&
\lesssim 2^{-j(\vartheta+\alpha-\epsilon)}\left[\int_0^t(t-s)^{-\frac{\alpha-\epsilon}{\alpha}}
\|u(s)\|_{B_{\infty,\infty}^{\delta+\epsilon}}\dif s
+\|f\|_{L^\infty_T(\bC^\vartheta)}\right],
\end{align*}
which implies that
\begin{align*}
\|u(t)\|_{B_{\infty,\infty}^{\alpha+\vartheta-\epsilon}}
\lesssim \int_0^t(t-s)^{-\frac{\alpha-\epsilon}{\alpha}}
\|u(s)\|_{B_{\infty,\infty}^{\delta+\epsilon}}\dif s
+\|f\|_{L^\infty_T\bC^\vartheta}.
\end{align*}
Let $\vartheta>(1-\alpha-\beta)\vee(-1)$. Since $\alpha>1-\beta$, we can choose $\epsilon>0$ small enough so that
\begin{align*}
\begin{cases}
\alpha+\vartheta-\epsilon>(1-\beta)\vee(\alpha-1)+\epsilon=\delta+\epsilon,\quad \vartheta\le 0;\\
\alpha+\vartheta-\epsilon>(\alpha-1)\vee(1-\beta)+\vartheta+\epsilon=\delta+\epsilon,\quad \vartheta>0.
\end{cases}
\end{align*}
Thus by Gronwall's inequality of Volterra's type, we get
\begin{align*}
\|u(t)\|_{B_{\infty,\infty}^{\delta+\epsilon}}
\lesssim \|u(t)\|_{B_{\infty,\infty}^{\alpha+\vartheta-\epsilon}}
\lesssim  \|f\|_{L^\infty_T(\bC^\vartheta)}.
\end{align*}
Substituting this into \eqref{0430:01}, we obtain \eqref{SchM0124}.

If $\sigma$ is a constant matrix, then there is no commutator term in \eqref{0310:02}.
Instead of \eqref{0430:01}, we have
$$
\|u\|_{L^\infty_T(B_{\infty,\infty}^{\vartheta+\eta})}
\lesssim (1+\lambda)^{-\frac{\alpha-\eta}{\alpha}}
\left(\|u\|_{L^\infty_T(B_{\infty,\infty}^{1-\beta+(\vartheta\vee0)+\eps})}
+\|f\|_{L^\infty_T(\bC^\vartheta)}\right).
$$
and for any $\vartheta>1-\alpha-\beta$, one can choose $\eps>0$ so that $\alpha+\vartheta-\eps>1-\beta+(\vartheta\vee0)+\eps$ and
\begin{align*}
\|u(t)\|_{B_{\infty,\infty}^{1-\beta+(\vartheta\vee0)+\eps}}
\lesssim \int_0^t(t-s)^{-\frac{\alpha-\epsilon}{\alpha}}
\|u(s)\|_{B_{\infty,\infty}^{1-\beta+(\vartheta\vee0)+\eps}}\dif s
+\|f\|_{L^\infty_T(\bC^\vartheta)}.
\end{align*}
By Gronwall's inequality, we conclude the proof.
\end{proof}

By Theorem \ref{SchE24}, we have the following easy corollary.
\begin{lemma}\label{BWPDE}
Let $g\in L_T^\infty(\bC^\beta)$ and $f\in L_T^\infty(\bC^\vartheta)$.
Fix $t\in[0,T]$. Consider the backward PDE
\begin{equation}\label{0310:03}
\partial_ru^t(r,x)+\sL_{\sigma(r,x)}u^t(r,x)+g\cdot \nabla u^t(r,x)+f(r,x)=0,
\quad u^t(t,x)=0.
\end{equation}
Assume that $(\alpha,\beta,\vartheta)$ is as in Theorem \ref{SchE24}.
Then there exists a unique solution $u^t$ to \eqref{0310:03} such that
for any
$\eta\in[0,\alpha]$,
\begin{equation}\label{0310:04}
\|u^t\|_{L_{[0,t]}^\infty(B_{\infty,\infty}^{\vartheta+\eta})}
\lesssim 
\|f\|_{L_T^\infty(\bC^\vartheta)}.
\end{equation}
\end{lemma}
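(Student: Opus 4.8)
The plan is to reduce the backward equation \eqref{0310:03} to the forward problem \eqref{Pareq24} with $\lambda=0$ by reversing time, and then invoke Theorem \ref{SchE24} verbatim. Fix $t\in[0,T]$ and set, for $s\in[0,t]$,
\begin{equation*}
v(s,x):=u^t(t-s,x),\qquad \tilde\sigma(s,x):=\sigma(t-s,x),\quad \tilde g(s,x):=g(t-s,x),\quad \tilde f(s,x):=f(t-s,x).
\end{equation*}
A direct computation ($\partial_s v(s,x)=-\partial_r u^t(t-s,x)$) turns \eqref{0310:03} into
\begin{equation*}
\partial_s v=\sL_{\tilde\sigma}v+\tilde g\cdot\nabla v+\tilde f,\qquad v(0,\cdot)=u^t(t,\cdot)=0,
\end{equation*}
which is precisely \eqref{Pareq24} with $\lambda=0$, time horizon $t$, and coefficients $\tilde\sigma,\tilde g,\tilde f$. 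Since {\bf (H$_\sigma$)} holds uniformly in the time variable, $\tilde\sigma$ again satisfies {\bf (H$_\sigma$)} with the same $\Lambda_1$; moreover $\|\tilde g\|_{L^\infty_{[0,t]}(\bC^\beta)}=\|g\|_{L^\infty_{[0,t]}(\bC^\beta)}\le\|g\|_{L^\infty_T(\bC^\beta)}$ and $\|\tilde f\|_{L^\infty_{[0,t]}(\bC^\vartheta)}\le\|f\|_{L^\infty_T(\bC^\vartheta)}$, and $(\alpha,\beta,\vartheta)$ lies in the admissible range of Theorem \ref{SchE24} by hypothesis.

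Next I would apply Theorem \ref{SchE24} (with $T$ replaced by $t$ and $\lambda=0$) to get a unique solution $v$ of the reversed equation satisfying, for every $\eta\in[0,\alpha]$,
\begin{equation*}
\|v\|_{L^\infty_{[0,t]}(B^{\vartheta+\eta}_{\infty,\infty})}\lesssim\|\tilde f\|_{L^\infty_{[0,t]}(\bC^\vartheta)}\le\|f\|_{L^\infty_T(\bC^\vartheta)}.
\end{equation*}
Undoing the substitution, $u^t(r,\cdot):=v(t-r,\cdot)$ solves \eqref{0310:03}, and since $\|u^t\|_{L^\infty_{[0,t]}(B^{\vartheta+\eta}_{\infty,\infty})}=\|v\|_{L^\infty_{[0,t]}(B^{\vartheta+\eta}_{\infty,\infty})}$, the estimate \eqref{0310:04} follows. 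Uniqueness for \eqref{0310:03} is inherited from the uniqueness assertion of Theorem \ref{SchE24}, because $u^t\mapsto v$ is a bijection between solutions of the two problems.

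The only point that needs a word of care — and it is bookkeeping rather than a genuine obstacle — is that the constant produced by Theorem \ref{SchE24} a priori depends on the time horizon, which here is the running variable $t$, whereas \eqref{0310:04} asks for a constant depending only on $\Theta$, $\eta$ and $\|g\|_{L^\infty_T(\bC^\beta)}$. In the proof of Theorem \ref{SchE24} the horizon enters only through the final Volterra–Gronwall step, whose constant is nondecreasing in the length of the time interval; hence it is dominated by its value at horizon $T$, and one obtains a bound in \eqref{0310:04} uniform over $t\in[0,T]$. No new estimate beyond Theorem \ref{SchE24} is required.
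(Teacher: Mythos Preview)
Your proof is correct and follows essentially the same approach as the paper: define the time-reversed function $v(s,x)=u^t(t-s,x)$ together with the time-reversed coefficients, observe that $v$ solves the forward equation \eqref{Pareq24} with $\lambda=0$, and apply Theorem \ref{SchE24}. Your additional remark about the uniformity of the constant in $t\in[0,T]$ is a nice clarification that the paper leaves implicit.
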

\begin{proof}
Define $\widetilde{u}^t(r,x):=u^t(t-r,x)$,
$\widetilde{\sigma}^t(r,x):=\sigma(t-r,x)$, $\widetilde{g}^t(r,x):=g(t-r,x)$
and $\widetilde{f}^t(r,x):=f(t-r,x)$
for all $(r,x)\in[0,t]\times\R^d$.
It can be verified that $\widetilde{u}^t$ satisfies the following forward PDE on $0\leq r\leq t$:
\begin{equation*}
 \partial_r\widetilde{u}^t(r,x)=\sL_{\widetilde{\sigma}^t}\widetilde{u}^t(r,x)
 +\widetilde{g}^t\cdot\nabla\widetilde{u}^t(r,x)+\widetilde{f}^t(r,x),\quad \widetilde{u}^t(0,x)=0.
\end{equation*}
Thanks to Theorem \ref{SchE24}, we obtain that
\begin{align*}
\|\widetilde{u}^t\|_{L_{[0,t]}^\infty(B_{\infty,\infty}^{\vartheta+\eta})}
&
\lesssim \|f(t-\cdot,\cdot)\|_{L_{[0,t]}^\infty(\bC^\vartheta)}
\lesssim \|f\|_{L_T^\infty(\bC^\vartheta)},
\end{align*}
which completes the proof. 
\end{proof}

Now we prove the following crucial Krylov-type estimate.
\begin{lemma}\label{KryEq}
Assume that {\bf(H$_{\sigma}$)} holds.
Let $(\alpha,\beta)\in\cA_1\cup\cA_2=:\cA$, $T>0$, $g\in L^\infty_T(\bC^{\beta})$,
and $X_t,t\in[0,T]$ be the solution of the following SDE
\begin{align*}
\dif X_t=g(t,X_t)\dif t+\sigma(t,X_{t-})\dif L_t^{(\alpha)}.
\end{align*}
Then for any $\delta\in(0,\alpha/2\wedge(\alpha+\beta-1))\cap[-\beta,\infty)$,
$f\in L^\infty_T(\bC^{\kappa}),\kappa>0$ and $p\geq2$,
there is a constant
$
C=C(\alpha,T,\beta,\|g\|_{L^\infty_T(\bC^{\beta})},\delta,p)>0
$
such that
\begin{align}\label{B03}
\mE\left[\sup_{t\in[0,T]}\left|\int_0^t f(s,X_s)\dif s\right|^p\right]\le
C\|f\|_{L^\infty_T(\bC^{-\delta})}^p.
\end{align}
\end{lemma}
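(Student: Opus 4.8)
\noindent\emph{Proof strategy.} The plan is to run a Zvonkin-type transform based on the backward nonlocal PDE of Lemma~\ref{BWPDE}. Fix $\epsilon\in(0,\tfrac{\alpha}{2}-\delta)$ and set $\vartheta:=-\delta$. Because $\delta\in(0,\tfrac{\alpha}{2}\wedge(\alpha+\beta-1))\cap[-\beta,\infty)$ we have $-\delta>(1-\alpha-\beta)\vee(-1)$ and $-\delta\leq\beta$, so $(\alpha,\beta,\vartheta)$ is admissible for Theorem~\ref{SchE24}; hence Lemma~\ref{BWPDE} (with terminal time $T$) produces a unique $u$ solving
\begin{equation*}
\partial_s u+\sL_{\sigma}u+g\cdot\nabla u+f=0,\qquad u(T,\cdot)=0,
\end{equation*}
and, choosing $\eta:=\tfrac{\alpha}{2}+\delta+\epsilon\in(\delta,\alpha]$ in \eqref{0310:04}, the a priori bound
\begin{equation*}
\|u\|_{L_T^\infty(\bC^{\alpha/2+\epsilon})}\lesssim\|u\|_{L_T^\infty(B_{\infty,\infty}^{\vartheta+\eta})}\lesssim\|f\|_{L_T^\infty(\bC^{-\delta})},
\end{equation*}
with a constant depending only on $\Theta$ (with $\vartheta=-\delta$) and $\|g\|_{L_T^\infty(\bC^\beta)}$.

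Next I would apply It\^o's formula to $s\mapsto u(s,X_s)$ on $[0,t]$ and use the PDE to turn the drift term into $-f(s,X_s)$. This gives, simultaneously for all $t\in[0,T]$,
\begin{equation*}
\int_0^t f(s,X_s)\,\dif s=u(0,X_0)-u(t,X_t)+M_t,\qquad M_t:=\int_0^t\!\!\int_{\R^d}H_s(z)\,\widetilde N(\dif s,\dif z),
\end{equation*}
where $H_s(z):=u(s,X_{s-}+\sigma(s,X_{s-})z)-u(s,X_{s-})$. Since $M$ does \emph{not} depend on $t$,
\begin{equation*}
\sup_{t\in[0,T]}\Big|\int_0^t f(s,X_s)\,\dif s\Big|\leq 2\|u\|_{L_T^\infty(L^\infty)}+\sup_{t\in[0,T]}|M_t|,
\end{equation*}
and the first term is already controlled by the Schauder bound. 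For $M$, note that $u(s,\cdot)\in\bC^{\alpha/2+\epsilon}$ uniformly in $s$ and $|\sigma(s,x)z|\lesssim_{\Lambda_1}|z|$, so $|H_s(z)|\lesssim\|u\|_{L_T^\infty(\bC^{\alpha/2+\epsilon})}\,(|z|^{\alpha/2+\epsilon}\wedge1)$.

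For the martingale term I would use the Kunita/Burkholder--Davis--Gundy inequality (valid for $p\geq2$)
\begin{equation*}
\mE\Big[\sup_{t\in[0,T]}|M_t|^p\Big]\lesssim_p\mE\Big(\int_0^T\!\!\int_{\R^d}|H_s(z)|^2\,\nu(\dif z)\,\dif s\Big)^{p/2}+\mE\int_0^T\!\!\int_{\R^d}|H_s(z)|^p\,\nu(\dif z)\,\dif s.
\end{equation*}
Using $\int_{|z|\leq1}|z|^{q}\,\nu(\dif z)<\infty$ for all $q>\alpha$ together with $\nu(\{|z|>1\})<\infty$, and the fact that $p\geq2$ forces $2(\tfrac{\alpha}{2}+\epsilon)>\alpha$ and $p(\tfrac{\alpha}{2}+\epsilon)>\alpha$, both integrals on the right are bounded by $C_{T,\alpha,d,p}\,\|u\|_{L_T^\infty(\bC^{\alpha/2+\epsilon})}^p$; this is exactly where the restriction $\delta<\tfrac{\alpha}{2}$ enters, since it is what keeps $\eta=\tfrac\alpha2+\delta+\epsilon\leq\alpha$ and hence the regularity exponent $\tfrac\alpha2+\epsilon$ of $u$ admissible in the Schauder estimate. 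Combining with the bound on $\|u\|_\infty$ gives \eqref{B03}.

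The main obstacle is rigour: $u$ is only $\bC^{\alpha+\vartheta}$-smooth, so in general $u\notin C^2$ (and $u\notin C^1$ when $\alpha\leq1$; moreover, if $(\alpha,\beta)\in\cA_2$ the drift $g$ is a genuine distribution and $g\cdot\nabla u$, $\sL_\sigma u$ are themselves distributions), which makes the It\^o step above formal. I would make it rigorous by mollification: replace $f$ by $f*\rho_n$ (and, in the distributional-drift case, $g$ by $g*\rho_n$), solve the corresponding PDE to obtain a smooth $u_n$ with Schauder bounds uniform in $n$ (using $\|f*\rho_n\|_{L_T^\infty(\bC^{-\delta-\epsilon})}\lesssim\|f\|_{L_T^\infty(\bC^{-\delta})}$ and $\|g*\rho_n\|_{L_T^\infty(\bC^\beta)}\lesssim\|g\|_{L_T^\infty(\bC^\beta)}$), run the argument for $u_n$ and the associated honest (martingale) solution $X^n$, and then pass to the limit: $\|f*\rho_n-f\|_{L_T^\infty(L^\infty)}\to0$ yields $\int_0^\cdot f*\rho_n(s,X^n_s)\,\dif s\to\int_0^\cdot f(s,X_s)\,\dif s$ (via stability of the martingale problem under mollification of $g$), $u_n\to u$ uniformly (Schauder applied to $f*\rho_n-f$ in $\bC^{-\delta-\epsilon}$), $M^n\to M$ in $L^2$, and \eqref{B03} survives by Fatou's lemma. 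When $(\alpha,\beta)\in\cA_1$ the drift $g$ is already $\beta$-H\"older with $\beta>0$, so only $f$ needs mollification and $X$ is the given strong solution.
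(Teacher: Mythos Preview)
Your proposal is correct and follows essentially the same approach as the paper: solve the backward PDE of Lemma~\ref{BWPDE} with terminal time $T$, apply It\^o's formula to $u(s,X_s)$ to express $\int_0^t f(s,X_s)\,\dif s$ as $u(0,X_0)-u(t,X_t)$ plus a purely discontinuous martingale, bound the first two terms by $\|u\|_{L^\infty_T}$, and control the martingale via the Burkholder--Davis--Gundy inequality together with the H\"older bound $|H_s(z)|\lesssim \|u\|_{L^\infty_T(\bC^{(\alpha+\epsilon)/2})}(1\wedge|z|^{(\alpha+\epsilon)/2})$ and the Schauder estimate \eqref{0310:04}. The paper carries this out tersely (and does not spell out the mollification you add to justify It\^o's formula when $u$ lacks $C^2$ regularity or $g$ is distributional), but the skeleton is identical.
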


\begin{proof}
Consider the following backward PDE for $0\leq s\leq T$:
\begin{align*}
\p_s u^T+\sL_\sigma u^T+g\cdot\nabla u^T+f=0,\quad u^T(T,x)=0.
\end{align*}
By Lemma \ref{BWPDE} and applying It\^o's formula to $s\mapsto u^T(s,X_{s})$, we have
\begin{align*}
\int_0^t f(s,X_s)\dif s=u^T(0,X_0)-u^T(t,X_t)
-\int_0^t\int_{\mR^d}
\left(u^T(s,X_{s-}+\sigma(s,X_{s-})z)-u^T(s,X_{s-})\right)\tilde{N}(\dif s,\dif z).
\end{align*}
Then by \eqref{0310:04} and the Burkholder-Davis-Gundy inequality
(\cite[Theorem 4.4.23]{APP09}),
we have for any $0<\epsilon<2-\alpha$,
\begin{align*}
&
\mE\left[\sup_{t\in[0,T]}\left|\int_0^t f(s,X_s)\dif s\right|^p\right]\\
&
\lesssim \|u^T\|_{L^\infty_T}^p
+\mE\left(\int_0^T\int_{\mR^d}|u^T(s,X_s+\sigma(s,X_s)z)-u^T(s,X_s)|^2\nu(\dif z)\dif s\right)^{p/2}\\
&\quad
+\mE\int_0^T\int_{\R^d}|u^T(s,X_s+\sigma(s,X_s)z)-u^T(s,X_s)|^p\nu(\dif z)\dif s\\
&
\lesssim \|u^T\|_{L^\infty_T}^p
+\|u^T\|_{L^\infty_T(\bC^{(\alpha+\epsilon)/2})}^p\left(\int_{\mR^d}
(1\wedge|z|^{\alpha+\epsilon})\nu(\dif z)\right)^{p/2}\lesssim \|f\|_{L^\infty_T(\bC^{-\delta})}^p,
\end{align*}
which completes the proof.
\end{proof}

\begin{lemma}\label{ElLem24}
Let $\sigma(t,x)=\sigma(x)$ for all $(t,x)\in\R_+\times\R^d$.
Consider the following elliptic
equation:
\begin{equation}\label{Elleq24}
\sL_\sigma v_\lambda-\lambda v_\lambda+g\cdot\nabla v_\lambda=f,
\end{equation}
where $g\in\bC^{\beta}$ and $f\in \bC^\vartheta$.
If $\beta$ and $\vartheta$ are as in Theorem \ref{SchE24},
then there exists a constant $\lambda_0>0$ such that for any $\lambda\geq\lambda_0$
there is a unique solution
to \eqref{Elleq24} such that for any $\eta\in[0,\alpha]$,
\begin{equation*}
\|v_\lambda\|_{B_{\infty,\infty}^{\vartheta+\eta}}
\leq C(\lambda)\|f\|_{\bC^\vartheta},
\end{equation*}
where $C(\lambda)$, depending on $\Theta,\|g\|_{\bC^\beta},\eta$
and $\lambda$,
satisfies $\lim_{\lambda\rightarrow\infty}C(\lambda)=0$.
\end{lemma}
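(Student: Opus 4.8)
The plan is to run the frequency-localised maximum principle of Theorem~\ref{SchE24} directly on the stationary equation \eqref{Elleq24}, and then to extract existence and uniqueness from the resulting a priori bound by the standard mollification/compactness scheme. As in that proof, I would first reduce to smooth $\sigma,g,f$ (and hence smooth $v_\lambda$). Fixing $j\ge-1$ and a point $x_j$ where $|\cR_j v_\lambda|$ attains its maximum (say $\cR_j v_\lambda(x_j)=\|\cR_j v_\lambda\|_\infty>0$, so that $\nabla\cR_j v_\lambda(x_j)=0$), Lemma~\ref{LMP} gives $\sL_\sigma\cR_j v_\lambda(x_j)\le-C_1 2^{\alpha j}\|\cR_j v_\lambda\|_\infty$; applying $\cR_j$ to \eqref{Elleq24}, evaluating at $x_j$ and writing $\cR_j(g\cdot\nabla v_\lambda)(x_j)=[\cR_j,g]\nabla v_\lambda(x_j)$ then yields
$$
(C_1 2^{\alpha j}+\lambda)\|\cR_j v_\lambda\|_\infty\le\|\cR_j f\|_\infty+\|[\cR_j,\sL_\sigma]v_\lambda\|_\infty+\bigl|[\cR_j,g]\nabla v_\lambda(x_j)\bigr|.
$$
This is precisely the stationary analogue of \eqref{0310:02}, with the time-convolution $\int_0^t\e^{-(C_1 2^{\alpha j}+\lambda)(t-s)}(\cdots)\dif s$ there replaced by the plain factor $(C_1 2^{\alpha j}+\lambda)^{-1}$, which is exactly what will produce the decay in $\lambda$.

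Next I would estimate the two commutators exactly as the terms $\sI_1$ and $\sI_2$ are handled in the proof of Theorem~\ref{SchE24}: Lemma~\ref{Lem0329} with $\kappa=(\vartheta\vee0)-1$ for $[\cR_j,\sL_\sigma]v_\lambda$, and \eqref{ComEs} (if $\beta>0$) or \eqref{0306:03} (if $\beta\le0$) for $[\cR_j,g]\nabla v_\lambda(x_j)$; all of these are $\lesssim 2^{-\vartheta j}(1+\|g\|_{\bC^\beta})\|v_\lambda\|_{B^{\delta+\epsilon}_{\infty,\infty}}$ with $\delta:=(\alpha-1)\vee(1-\beta)+\vartheta\vee0$, while $\|\cR_j f\|_\infty\le 2^{-\vartheta j}\|f\|_{\bC^\vartheta}$. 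Using the elementary inequality $C_1 2^{\alpha j}+\lambda\ge(C_1 2^{\alpha j})^{(\alpha-\eta)/\alpha}\lambda^{\eta/\alpha}$ valid for $\eta\in[0,\alpha]$, dividing through, multiplying by $2^{(\vartheta+\eta)j}$ and taking the supremum over $j$ would give, for all $\lambda\ge1$,
$$
\|v_\lambda\|_{B^{\vartheta+\eta}_{\infty,\infty}}\le C(1+\lambda)^{-\frac{\alpha-\eta}{\alpha}}\Bigl(\|f\|_{\bC^\vartheta}+(1+\|g\|_{\bC^\beta})\|v_\lambda\|_{B^{\delta+\epsilon}_{\infty,\infty}}\Bigr),\qquad\eta\in[0,\alpha].
$$
The hypothesis $\vartheta>(1-\alpha-\beta)\vee(-1)$ forces $\alpha+\vartheta>(\alpha-1)\vee(1-\beta)$, so for $\epsilon>0$ small one has $\delta+\epsilon<\alpha+\vartheta$, and I may fix $\eta$ with $\max(0,\delta+\epsilon-\vartheta)\le\eta<\alpha$; then $B^{\vartheta+\eta}_{\infty,\infty}\hookrightarrow B^{\delta+\epsilon}_{\infty,\infty}$, and choosing $\lambda_0\ge1$ so large that $C(1+\lambda_0)^{-(\alpha-\eta)/\alpha}(1+\|g\|_{\bC^\beta})\le\tfrac12$ lets me absorb the last term for $\lambda\ge\lambda_0$, giving $\|v_\lambda\|_{B^{\delta+\epsilon}_{\infty,\infty}}\lesssim\|f\|_{\bC^\vartheta}$. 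Substituting this back, I would obtain, for all $\lambda\ge\lambda_0$ and $\eta\in[0,\alpha]$, $\|v_\lambda\|_{B^{\vartheta+\eta}_{\infty,\infty}}\le C(\lambda)\|f\|_{\bC^\vartheta}$ with $C(\lambda)=C(\Theta,\eta,\|g\|_{\bC^\beta})(1+\lambda)^{-(\alpha-\eta)/\alpha}$, which tends to $0$ as $\lambda\to\infty$ whenever $\eta<\alpha$.

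For existence I would mollify $\sigma,g,f$ to smooth $\sigma_n,g_n,f_n$ (keeping {\bf(H$_\sigma$)} up to a fixed constant and with $\|f_n\|_{\bC^\vartheta}\lesssim\|f\|_{\bC^\vartheta}$, $\|g_n\|_{\bC^\beta}\lesssim\|g\|_{\bC^\beta}$), solve the regularised equation via the resolvent $v_\lambda^n:=\int_0^\infty\e^{-\lambda t}P_t^n f_n\,\dif t$ of the Feller semigroup generated by $\sL_{\sigma_n}+g_n\cdot\nabla$, observe that the a priori bound above applies to $v_\lambda^n$ uniformly in $n$, extract a subsequence converging in $\bC^{\vartheta+\alpha-\epsilon}_{\mathrm{loc}}$ and pass to the limit in the equation; uniqueness then follows by applying the same a priori bound to the difference of two solutions, which solves \eqref{Elleq24} with $f\equiv0$. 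I expect the only genuinely delicate point to be the closing step of the second paragraph: since, in contrast with the parabolic case, there is here no convolution in which to trade regularity against a power of $(t-s)$, the term $\|v_\lambda\|_{B^{\delta+\epsilon}_{\infty,\infty}}$ can be absorbed only at the price of taking $\lambda\ge\lambda_0$ large — this is exactly why the statement is restricted to large $\lambda$ — and one must ensure that $\delta+\epsilon$ stays strictly below $\alpha+\vartheta$, which is precisely the role played by the assumption on $\vartheta$.
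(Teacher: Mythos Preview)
Your argument is correct and takes a genuinely different, more direct route than the paper's. The paper does not rerun the frequency-localised maximum principle on the stationary equation; instead it bootstraps from the already-proven parabolic estimate (Theorem~\ref{SchE24}) via a time-cutoff trick: setting $u_\lambda(t,x):=\phi(t)v_\lambda(x)$ for a fixed nonnegative nonzero $\phi\in C^\infty([0,T])$ with $\phi(0)=0$, one checks that $u_\lambda$ solves \eqref{Pareq24} with source $-\phi f+\phi' v_\lambda$, whence \eqref{SchM0124} gives
\[
\|\phi\|_\infty\,\|v_\lambda\|_{B^{\vartheta+\eta}_{\infty,\infty}}
\le C(1+\lambda)^{-\frac{\alpha-\eta}{\alpha}}\bigl(\|\phi\|_\infty\|f\|_{\bC^\vartheta}+\|\phi'\|_\infty\|v_\lambda\|_{\bC^\vartheta}\bigr),
\]
and the $\|v_\lambda\|_{\bC^\vartheta}$ term is then absorbed for $\lambda\ge\lambda_0$ exactly as in your closing step. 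Their approach is shorter because it avoids repeating the commutator bounds from Lemmas~\ref{Lem0329} and \eqref{ComEs}; yours is self-contained, makes the elliptic estimate logically independent of the parabolic one, and exhibits the explicit decay rate $(1+\lambda)^{-(\alpha-\eta)/\alpha}$ more transparently.

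One small slip to fix: the elementary inequality you need is
\[
C_1 2^{\alpha j}+\lambda\ \ge\ (C_1 2^{\alpha j})^{\eta/\alpha}\,\lambda^{(\alpha-\eta)/\alpha},
\]
with the exponents swapped from what you wrote. Indeed, after multiplying by $2^{(\vartheta+\eta)j}$ the factor to be absorbed is $2^{\eta j}$, so you need $(C_1 2^{\alpha j}+\lambda)^{-1}\lesssim 2^{-\eta j}\lambda^{-(\alpha-\eta)/\alpha}$; with your exponents one would instead get a spurious factor $2^{(2\eta-\alpha)j}$ that blows up for $\eta>\alpha/2$. With this correction the displayed bound and the absorption argument go through verbatim.
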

\begin{proof}
Let $v_\lambda$ be a solution to \eqref{Elleq24} and $\phi(t),t\geq0$ be a nonnegative
and nonzero smooth function with $\phi(0)=0$.
Then $u_\lambda(t,x):=\phi(t)v_\lambda(x)$ satisfies
\begin{equation*}
\partial_tu_\lambda(t,x)=\sL_{\sigma} u_\lambda(t,x)-\lambda u_\lambda(t,x)
+g\cdot\nabla u_\lambda(t,x)-\phi(t)f(x)+\phi'(t)v_\lambda(x),\quad u_\lambda(0,x)=0.
\end{equation*}
In view of Theorem \ref{SchE24}, we have for any $\eta\in[0,\alpha]$,
\begin{align*}
\|u_\lambda\|_{L_T^\infty(B_{\infty,\infty}^{\vartheta+\eta})}
\leq C(1+\lambda)^{-\frac{\alpha-\eta}{\alpha}}
\|\phi f-\phi'v_\lambda\|_{L_T^\infty(\bC^{\vartheta})},
\end{align*}
where $C=C(\Theta,\|g\|_{\bC^\beta},\eta)$.
Taking $\lambda_0\leq\lambda$,
then we have
\begin{align*}
\|v_\lambda\|_{\bC^{\vartheta+\eta}}
\leq
&
C(1+\lambda_0)^{-\frac{\alpha-\eta}{2\alpha}}(1+\lambda)^{-\frac{\alpha-\eta}{2\alpha}}
\|\phi\|_\infty^{-1}\left(\|\phi\|_\infty\|f\|_{\bC^\vartheta}
+\|\phi'\|_\infty\|v_\lambda\|_{\bC^{\vartheta}}\right).
\end{align*}
Letting $\lambda_0$ large enough such that
$$
C(1+\lambda_0)^{-\frac{\alpha-\eta}{2\alpha}}\|\phi\|_\infty^{-1}\|\phi'\|_\infty<\frac12,
$$
we complete the proof.
\end{proof}

\section{Averaging principle}\label{AP}
\subsection{Proof of Theorem \ref{Th1}}\label{SCR}

Before proving Theorem \ref{Th1}, let us first show some results concerning  mollifying
approximation.
Let $\rho\in\sS(\R^d)$ be a probability density function.
For any $n\in\N$, we set
\begin{equation*}
\rho_n(x):=n^d\rho(nx),\quad \forall x\in\R^d.
\end{equation*}
Then define the mollifying approximation of
$f\in\sS'(\R^d)$ as follows:
\begin{equation*}
f_n(x):=f*\rho_n(x),\quad \forall x\in\R^d.
\end{equation*}

\begin{lemma}\label{UConB}
Assume that $f\in B_{\infty,\infty}^{\kappa}$, where $\kappa\in\R$. Then for any $\delta\geq0$,
\begin{equation}\label{UConBeq}
\|f_n\|_{B_{\infty,\infty}^{\kappa+\delta}}
\lesssim \|f\|_{B_{\infty,\infty}^{\kappa}}n^\delta.
\end{equation}
\end{lemma}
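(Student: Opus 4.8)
The plan is to estimate the Littlewood--Paley blocks $\cR_j f_n$ directly using the fact that mollification is a Fourier multiplier whose symbol is $\hat\rho_n(\xi)=\hat\rho(\xi/n)$ (up to the usual normalizing constant), which is smooth and bounded with all derivatives bounded. First I would write $\cR_j f_n = \cR_j(\rho_n * f)$ and note that convolution commutes with the dyadic projector, so $\cR_j f_n = \rho_n * \cR_j f$, whence by Young's inequality $\|\cR_j f_n\|_\infty \le \|\rho_n\|_{L^1}\,\|\cR_j f\|_\infty = \|\rho\|_{L^1}\,\|\cR_j f\|_\infty$. This already gives $\|f_n\|_{B_{\infty,\infty}^\kappa}\lesssim\|f\|_{B_{\infty,\infty}^\kappa}$ uniformly in $n$, which is the $\delta=0$ case; the point of the lemma is the gain of $n^\delta$ when one asks for $\delta$ more derivatives.

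To capture the $n^\delta$ factor I would split the frequencies of $f_n$ into low and high ranges relative to $n$. For the low-frequency blocks, those with $2^j \lesssim n$, I simply use $2^{j(\kappa+\delta)}\|\cR_j f_n\|_\infty = 2^{j\delta}\cdot 2^{j\kappa}\|\cR_j f_n\|_\infty \lesssim n^\delta\,2^{j\kappa}\|\cR_j f\|_\infty \le n^\delta\|f\|_{B_{\infty,\infty}^\kappa}$, using the uniform bound from the previous paragraph and $2^{j\delta}\lesssim n^\delta$. For the high-frequency blocks, those with $2^j \gg n$, I would exploit the rapid decay of $\hat\rho$: since $\hat\rho_n$ is Schwartz-class on scale $n$, the multiplier $\varphi(2^{-j}\cdot)\hat\rho_n$ has, for $2^j\gtrsim n$, an inverse Fourier transform with $L^1$ norm bounded by any negative power of $2^j/n$. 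Concretely, writing the convolution kernel of $\cR_j \circ (\rho_n*)$ and using $|\partial^\gamma \hat\rho(\xi)|$ bounds together with the fact that its support is at $|\xi|\sim 2^j \gtrsim n$, one gets $\|\cR_j f_n\|_\infty \lesssim_N (n 2^{-j})^N \|\cR_j f\|_\infty$ for every $N$; choosing $N > \kappa+\delta$ makes the high-frequency sum/supremum bounded by $n^\delta\|f\|_{B_{\infty,\infty}^\kappa}$ as well (indeed it contributes something negligible). Alternatively, and perhaps more cleanly, one can write $\rho_n * \cR_j f = \rho_n * \widetilde{\cR}_j \cR_j f$ using \eqref{0123:05} and put the $n^\delta$ on the factor $\rho_n$ by noting $\rho_n * \psi_j = n^\delta\,((-\Delta)^{\delta/2}\text{-type rescaled kernel}) * \cdots$; but the direct multiplier estimate is the least fussy route.

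Taking the supremum over all $j\ge-1$ of $2^{j(\kappa+\delta)}\|\cR_j f_n\|_\infty$ and combining the two regimes yields \eqref{UConBeq}. The main obstacle, such as it is, is purely bookkeeping: making the high-frequency multiplier estimate precise, i.e. showing $\|\cF^{-1}(\varphi(2^{-j}\cdot)\hat\rho(\cdot/n))\|_{L^1} \lesssim_N (n/2^j)^N$, which is a standard integration-by-parts / stationary-phase-free argument using that $\hat\rho\in\sS$ and that on the support of $\varphi(2^{-j}\cdot)$ one has $|\xi|\ge 2^{j-1}$. Everything else is Young's inequality and elementary summation, and no deep input beyond the definition of the Besov norm (Definition \ref{defBS}) and the support properties of $\varphi$ fixed just before it is needed.
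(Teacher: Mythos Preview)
Your proposal is correct and is essentially the same argument as the paper's, just organized differently. The paper first proves the key kernel estimate $\|\cR_j\rho_n\|_{L^1}\lesssim 1\wedge (n/2^j)^m$ (by integration by parts in the convolution), sums it to obtain $\|\rho_n\|_{B_{1,1}^\delta}\lesssim n^\delta$, and then concludes via $\cR_j f_n=\cR_jf*\widetilde\cR_j\rho_n$ and Young's inequality; your low/high frequency split is exactly the same computation with the sum over $j$ replaced by a case distinction at the threshold $2^j\sim n$. One small correction: in the high-frequency regime you need $N>\delta$, not $N>\kappa+\delta$, since the factor $2^{j\kappa}\|\cR_jf\|_\infty$ is already controlled by $\|f\|_{B_{\infty,\infty}^\kappa}$; when $\kappa<0$ your stated condition would not suffice, but choosing $N$ large enough of course works.
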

\begin{proof}
Fix $j\geq0$ and $m\geq0$. Note that by changes of variables, we have
\begin{align*}
\|\cR_j\rho_n\|_{L^1}
&
=\int_{\R^d}\left|\int_{\R^d}h_j(y)\rho_n(x-y)\dif y\right|\dif x\\\nonumber
&
=2^{-jd}\int_{\R^d}\left|\int_{\R^d}\rho_n(2^{-j}(x-y))h(y)\dif y\right|\dif x\\\nonumber
&
\lesssim2^{-jd}\int_{\R^d}\left|\int_{\R^d}\nabla^m\rho_n(2^{-j}(x-y))\nabla^{-m}h(y)\dif y\right|\dif x\\\nonumber
&
\lesssim 2^{-jd}\int_{\R^d}\int_{\R^d}\left|\nabla^m\rho_n(2^{-j}(x-y))\right|
\left|\nabla^{-m}h(y)\right|\dif x\dif y\\\nonumber
&
\lesssim 2^{-jm}\int_{\R^d}\left|\nabla^m\rho_n(x)\right|\dif x\lesssim 2^{-jm}n^{m},
\end{align*}
which together with the property of $\cR_j$ implies that
\begin{align}\label{UConBeq02}
\|\cR_j\rho_n\|_{L^1}
\lesssim \|\rho_n\|_{L^1}\wedge2^{-jm}n^{m}
\lesssim1\wedge \left(2^{-jm}n^{m}\right),
\quad j,m\geq0.
\end{align}
Without loss of generality, we assume that $\delta>0$.
Then it follows from \eqref{UConBeq02} and a change of variable
that for any $\delta>0$ and $m>1$
\begin{align*}
\|\rho_n\|_{B_{1,1}^\delta}
&
=\sum_{j=-1}^\infty2^{j\delta}\|\cR_j\rho_n\|_{L^1}
\lesssim 1+\sum_{j=1}^\infty2^{j\delta}\|\cR_j\rho_n\|_{L^1}\\
&
\lesssim 1+\sum_{j=1}^\infty2^{j\delta}\left[1\wedge2^{-jm}n^{m}\right]\\
&
\lesssim 1+\sum_{j=1}^\infty2^{j(\delta-1)}[1\wedge 2^{-jm}n^{m}]\int_{2^{j-1}}^{2^j}1\dif x\\
&
\lesssim 1+\int_0^\infty x^{\delta-1}[1\wedge x^{-m}n^m]\dif x\\
&
\lesssim 1+n^\delta\int_0^\infty y^{\delta-1}[1\wedge y^{-m}]\dif y\lesssim n^\delta,
\end{align*}
which implies that
\begin{align*}
\|f_n\|_{B_{\infty,\infty}^{\kappa+\delta}}
&
=\sup_{j\geq-1}2^{j(\kappa+\delta)}\|\cR_jf_n\|_\infty\\
&
\leq\sup_{j\geq-1}\left(2^{j(\kappa+\delta)}\sum_{i=j-1}^{j+1}\|\cR_jf*\cR_i\rho_n\|_\infty\right)\\
&
\leq\sup_{j\geq-1}\left(2^{j\kappa}\|\cR_jf\|_\infty2^\delta\sum_{i=j-1}^{j+1}2^{i\delta}
\|\cR_i\rho_n\|_{L^1}\right)\\
&
\leq 2^\delta\|f\|_{B_{\infty,\infty}^\kappa}\|\rho_n\|_{B_{1,1}^\delta}
\lesssim\|f\|_{B_{\infty,\infty}^\kappa}n^\delta.
\end{align*}
The proof is complete.
\end{proof}

\begin{lemma}\label{UConA}
Suppose that $f\in \bC^{\kappa}$, $\kappa\in\R$.
Then for any $\delta\in[0,1]$ we have
\begin{equation}\label{UConAeq}
\|f_n-f\|_{B_{\infty,\infty}^{\kappa}}
\lesssim n^{-\delta}\|f\|_{B_{\infty,\infty}^{\kappa+\delta}}.
\end{equation}
\end{lemma}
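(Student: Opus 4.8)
The plan is to estimate the $B^\kappa_{\infty,\infty}$-norm dyadic block by dyadic block. Since convolution with $\rho_n$ commutes with the Littlewood--Paley projections, $\cR_j(f_n-f)=(\cR_jf)*\rho_n-\cR_jf$ for every $j\ge-1$. Because $\rho$ is a probability density we have $\int_{\R^d}\rho_n=1$, so one can write
$$
\big((\cR_jf)*\rho_n-\cR_jf\big)(x)=\int_{\R^d}\rho_n(y)\big(\cR_jf(x-y)-\cR_jf(x)\big)\dif y,
$$
and the whole problem reduces to controlling the increments of the smooth function $\cR_jf$.

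Next I would bound the increment in two ways: trivially $|\cR_jf(x-y)-\cR_jf(x)|\le 2\|\cR_jf\|_\infty$, and by the fundamental theorem of calculus $|\cR_jf(x-y)-\cR_jf(x)|\le\|\nabla\cR_jf\|_\infty|y|$. Interpolating these with exponent $\delta\in[0,1]$ via the elementary inequality $a\wedge b\le a^{1-\delta}b^{\delta}$ for $a,b\ge0$ gives
$$
|\cR_jf(x-y)-\cR_jf(x)|\lesssim\|\cR_jf\|_\infty^{1-\delta}\|\nabla\cR_jf\|_\infty^{\delta}|y|^\delta.
$$
Now Bernstein's inequality (Lemma \ref{Lem28} with $p=q=\infty$, $k=1$) yields $\|\nabla\cR_jf\|_\infty\lesssim 2^j\|\cR_jf\|_\infty$ for all $j\ge-1$, and a change of variables gives $\int_{\R^d}\rho_n(y)|y|^\delta\dif y=n^{-\delta}\int_{\R^d}\rho(z)|z|^\delta\dif z\lesssim n^{-\delta}$ since $\rho\in\sS(\R^d)$. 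Combining these,
$$
\|\cR_j(f_n-f)\|_\infty\lesssim n^{-\delta}\,2^{j\delta}\,\|\cR_jf\|_\infty .
$$

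Finally, multiplying by $2^{j\kappa}$ and taking the supremum over $j\ge-1$ yields
$$
\|f_n-f\|_{B^\kappa_{\infty,\infty}}=\sup_{j\ge-1}2^{j\kappa}\|\cR_j(f_n-f)\|_\infty\lesssim n^{-\delta}\sup_{j\ge-1}2^{j(\kappa+\delta)}\|\cR_jf\|_\infty=n^{-\delta}\|f\|_{B^{\kappa+\delta}_{\infty,\infty}},
$$
which is the desired estimate \eqref{UConAeq} (trivially true when the right-hand side is infinite). There is no genuine obstacle here; the only points requiring a little care are the commutation of $\cR_j$ with the mollification, the interpolation step $a\wedge b\le a^{1-\delta}b^{\delta}$, and checking that all constants are uniform down to $j=-1$, which is precisely the range where Bernstein's inequality in Lemma \ref{Lem28} remains valid.
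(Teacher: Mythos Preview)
Your proof is correct and follows essentially the same strategy as the paper: express $\cR_j(f_n-f)$ as an integral of increments of $\cR_jf$ against $\rho_n$, extract a factor $|y|^\delta 2^{j\delta}\|\cR_jf\|_\infty$, integrate to pick up $n^{-\delta}$, and take the supremum over $j$. The only cosmetic difference is that you handle all $\delta\in[0,1]$ uniformly via the interpolation $\min(a,b)\le a^{1-\delta}b^\delta$ combined with Bernstein, whereas the paper splits into $\delta\in(0,1)$ (using the H\"older seminorm of $\cR_jf$ and block near-orthogonality) and $\delta=1$ (Bernstein directly); your route is slightly more economical.
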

\begin{proof}
If $\delta=0$ then \eqref{UConAeq} follows from Lemma \ref{UConB}.
Now we consider the case where $\delta\in(0,1]$.
Given $y\in\R^d$, we have for all $\delta\in(0,1)$,
\begin{align}\label{UconAP01}
\|f(\cdot-y)-f(\cdot)\|_{B_{\infty,\infty}^{\kappa}}
&
=\sup_{j\geq-1}2^{j\kappa}\|\cR_jf(\cdot-y)-\cR_jf(\cdot)\|_\infty\nonumber\\
&
\leq \sup_{j\geq-1}2^{j\kappa}|y|^{\delta}\|\cR_jf\|_{\bC^{\delta}}\\
&
\leq \sup_{j\geq-1}2^{j\kappa}|y|^{\delta}\|\cR_jf\|_{B_{\infty,\infty}^{\delta}}\no\\
&
=\sup_{j\geq-1}\left[2^{j\kappa}|y|^{\delta}\left(\sup_{j-1\leq i\leq j+1}
2^{i\delta}\|\cR_i\cR_jf\|_\infty\right)\right]\nonumber\\
&
\leq 4|y|^{\delta}\|f\|_{B_{\infty,\infty}^{\kappa+\delta}},\nonumber
\end{align}
and by \eqref{Berneq01} we have for $\delta=1$,
\begin{align}\label{0802:01}
\|f(\cdot-y)-f(\cdot)\|_{B_{\infty,\infty}^{\kappa}}
&
=\sup_{j\geq-1}2^{j\kappa}\|\cR_jf(\cdot-y)-\cR_jf(\cdot)\|_\infty\nonumber\\
&
\leq \sup_{j\geq-1}2^{j\kappa}|y|\|\nabla\cR_jf\|_{\infty}\\
&
\leq \sup_{j\geq-1}2^{j(\kappa+1)}|y|\|\cR_jf\|_{\infty}
=|y|\|f\|_{B_{\infty,\infty}^{\kappa+1}}.\nonumber
\end{align}
Then by \eqref{UconAP01} and \eqref{0802:01} one sees that
for all $\delta\in(0,1]$
\begin{align*}
\|f_n-f\|_{B_{\infty,\infty}^{\kappa}}
&
=\left\|\int_{\R^d}\left(f(\cdot-y)-f(\cdot)\right)\rho_n(y)\dif y\right\|_{B_{\infty,\infty}^{\kappa}}\\\nonumber
&
\leq \int_{\R^d}\|f(\cdot-y)-f(\cdot)\|_{B_{\infty,\infty}^{\kappa}}|\rho_n(y)|\dif y\\\nonumber
&
\lesssim \int_{\R^d}|y|^{\delta}|\rho_n(y)|\dif y\|f\|_{B_{\infty,\infty}^{\kappa+\delta}}\\
&
\lesssim n^{-\delta}\|f\|_{B_{\infty,\infty}^{\kappa+\delta}}.
\end{align*}
\end{proof}

Let $\{Z_t,t\geq0\}$ be a nonnegative c\`adl\`ag process and $\{A_t,t\geq0\}$
be an increasing predictable process.
We say that {\it $A$ dominates $Z$} if for any stopping
time $\tau$,
\begin{equation*}
\mE Z_\tau\leq\mE A_\tau.
\end{equation*}
Recall the following Lenglart's inequality; see \cite[Corollary II]{Len77}.
\begin{lemma}\label{Leneq}
If $Z$ is dominated by $A$, then for any $p\in(0,1)$
and stopping time $\tau$,
\begin{equation*}
\mE\left[\left(\sup_{s\in[0,\tau]}|Z_s|\right)^p\right]
\leq\frac{2-p}{1-p}\mE\left(A_\tau^p\right).
\end{equation*}
\end{lemma}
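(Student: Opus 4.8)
The plan is to follow the classical two-step proof of Lenglart's inequality: first derive a weak-type (distributional) estimate from the domination hypothesis by optional stopping, and then integrate it in the level variable to obtain the $L^p$-bound with the sharp constant $\tfrac{2-p}{1-p}$. Throughout write $Z^*_\tau:=\sup_{s\in[0,\tau]}|Z_s|$; recall $Z\geq 0$, so $|Z_s|=Z_s$.

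The first step is to establish the tail estimate
\begin{equation}\label{eq:lenglart-tail}
\mP\big(Z^*_\tau>c\big)\ \leq\ \frac1c\,\mE\big(A_\tau\wedge\eta\big)+\mP\big(A_\tau\geq\eta\big),\qquad c,\eta>0.
\end{equation}
For this I would introduce the stopping times $T_c:=\inf\{t\geq0:Z_t>c\}$ and $R_\eta:=\inf\{t\geq0:A_t\geq\eta\}$. On $\{Z^*_\tau>c\}$ one has $T_c\leq\tau$ and $Z_{T_c}\geq c$ (the set $\{t:Z_t>c\}$ is contained in $[T_c,\infty)$, so this follows from the right-continuity of $Z$), while $\{A_\tau<\eta\}=\{R_\eta>\tau\}$ because $A$ is increasing. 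Hence on $\{Z^*_\tau>c\}\cap\{A_\tau<\eta\}$ we have $T_c\wedge\tau\wedge R_\eta=T_c$, so that $c\,\b1_{\{Z^*_\tau>c\}\cap\{A_\tau<\eta\}}\leq Z_{T_c\wedge\tau\wedge R_\eta}$. Applying the domination property at the stopping time $T_c\wedge\tau\wedge R_\eta$, together with the monotonicity of $A$ and the fact that $A_t<\eta$ for $t<R_\eta$, gives
\begin{equation*}
c\,\mP\big(Z^*_\tau>c,\,A_\tau<\eta\big)\ \leq\ \mE\,Z_{T_c\wedge\tau\wedge R_\eta}\ \leq\ \mE\,A_{T_c\wedge\tau\wedge R_\eta}\ \leq\ \mE\big(A_\tau\wedge\eta\big),
\end{equation*}
and adding the trivial bound $\mP(Z^*_\tau>c,\,A_\tau\geq\eta)\leq\mP(A_\tau\geq\eta)$ yields \eqref{eq:lenglart-tail}. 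If $A$ jumps at $R_\eta$, the estimate $A_{\cdot\wedge R_\eta}\leq\eta$ is not quite pathwise correct; this single delicate point is circumvented by replacing $R_\eta$ with an announcing sequence $R_n\uparrow R_\eta$, $R_n<R_\eta$ — available precisely because $A$ is predictable — for which $A_{\cdot\wedge R_n}<\eta$ does hold, followed by the monotone limit $n\to\infty$ (this is the argument in \cite[Corollary~II]{Len77}).

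The second step feeds \eqref{eq:lenglart-tail} into the layer-cake identity $\mE[(Z^*_\tau)^p]=\int_0^\infty pc^{p-1}\mP(Z^*_\tau>c)\,\dif c$ with the choice $\eta=c$, and interchanges $\mE$ with $\int_0^\infty\dif c$ by Tonelli:
\begin{equation*}
\mE[(Z^*_\tau)^p]\ \leq\ \mE\!\int_0^\infty pc^{p-2}\big(A_\tau\wedge c\big)\,\dif c\ +\ \mE\!\int_0^\infty pc^{p-1}\b1_{\{A_\tau\geq c\}}\,\dif c.
\end{equation*}
For a fixed value $a\geq 0$ the two inner integrals are elementary: $\int_0^\infty pc^{p-2}(a\wedge c)\,\dif c=\big(1+\tfrac{p}{1-p}\big)a^p=\tfrac{a^p}{1-p}$ (convergence at $\infty$ uses $p<1$) and $\int_0^\infty pc^{p-1}\b1_{\{a\geq c\}}\,\dif c=a^p$. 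Consequently $\mE[(Z^*_\tau)^p]\leq\big(\tfrac1{1-p}+1\big)\mE[A_\tau^p]=\tfrac{2-p}{1-p}\mE[A_\tau^p]$, which is exactly the assertion (when $\mE[A_\tau^p]=\infty$ there is nothing to prove).

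I expect the main obstacle to lie entirely in the first step: extracting the weak-type bound \eqref{eq:lenglart-tail} from the bare domination property, and in particular handling the possible jump of the predictable increasing process $A$ at the localizing time $R_\eta$, which is the one place where the predictability hypothesis is genuinely used. Once \eqref{eq:lenglart-tail} is available, the second step is a routine computation that happens to reproduce the stated constant $\tfrac{2-p}{1-p}$ exactly.
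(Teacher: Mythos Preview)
Your argument is correct and is precisely the classical two–step proof from \cite[Corollary~II]{Len77}: the weak-type bound via optional stopping at $T_c\wedge\tau\wedge R_\eta$ (with the predictability of $A$ used to handle the jump at $R_\eta$), followed by integration of the tail with $\eta=c$. The paper does not supply its own proof of this lemma; it merely states the result and refers to Lenglart's original paper, so there is nothing further to compare.
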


For simplicity, we write for a function $f:\R_+\times\R^d\rightarrow\R^m$
$$
f_\eps(t,x):=f\left(\tfrac{t}{\eps},x\right),\quad \forall (t,x)\in\R_+\times\R^d.
$$
Now we give the following crucial lemma, which describes the fluctuation
between $b$ and $\bar{b}$ when $\phi=b-\bar{b}$.
\begin{lemma}\label{0116Lem}
Assume that $\phi\in L_T^\infty(\bC^2)$
and $X_t^\eps$ is the solution to \eqref{Main01} with $(\alpha,\beta)\in\cA_1$.
Then for any $p\geq1$ and $\kappa>\left(1-\tfrac{\alpha}{2}\right)\vee
\tfrac{\alpha}{2}$, there exits a constant
$C=C(\Theta,p,\|b\|_{L_T^\infty(\bC^\beta)})$
such that for any $0<\eps\leq1$,
\begin{equation*}
\mE\left(\sup_{t\in[0,T]}\left|\int_0^t\phi\left(\frac{s}{\eps},X_s^\eps\right)\dif s\right|^p\right)
\lesssim_C \eps^p\sup_{t\in[0,T]}\left\|\int_0^{\frac{t}{\eps}}\phi(s,\cdot)\dif s\right\|_{\bC^\kappa}^p.
\end{equation*}
\end{lemma}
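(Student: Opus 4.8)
Here is a plan for proving Lemma~\ref{0116Lem}. The plan is to realize $\int_0^t\phi(s/\eps,X_s^\eps)\dif s$ as the time-integral term of an It\^o expansion and dispose of the remaining terms by the Krylov-type estimate of Lemma~\ref{KryEq}. First I would set $\Psi(t,x):=\int_0^t\phi(s,x)\dif s$ and $\Phi_\eps(t,x):=\eps\,\Psi(t/\eps,x)$, so that $\partial_t\Phi_\eps(t,x)=\phi(t/\eps,x)$, $\Phi_\eps(0,\cdot)\equiv0$, and $\|\Phi_\eps(t,\cdot)\|_{\bC^\theta}=\eps\,\|\int_0^{t/\eps}\phi(s,\cdot)\dif s\|_{\bC^\theta}$ for every $\theta\in\R$; from $\phi\in L^\infty_T(\bC^2)$ one gets $\Phi_\eps(t,\cdot)\in\bC^2$ with $\sup_{t\le T}\|\Phi_\eps(t,\cdot)\|_{\bC^2}\le T\|\phi\|_{L^\infty_T(\bC^2)}$, which is exactly the regularity needed to run It\^o's formula for the jump SDE \eqref{Main01}. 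Since the generator of $X^\eps$ is $b(t/\eps,\cdot)\cdot\nabla+\sL_{\sigma(t/\eps,\cdot)}$, It\^o's formula gives, after rearranging and using $\Phi_\eps(0,\cdot)=0$,
\begin{align*}
\int_0^t\phi\Big(\tfrac{s}{\eps},X_s^\eps\Big)\dif s
&=\Phi_\eps(t,X_t^\eps)-\int_0^t\Big(b\big(\tfrac{s}{\eps},\cdot\big)\cdot\nabla\Phi_\eps
+\sL_{\sigma(s/\eps,\cdot)}\Phi_\eps\Big)(s,X_s^\eps)\dif s\\
&\quad-\int_0^t\!\!\int_{\R^d}\Big(\Phi_\eps\big(s,X_{s-}^\eps+\sigma(\tfrac{s}{\eps},X_{s-}^\eps)z\big)
-\Phi_\eps(s,X_{s-}^\eps)\Big)\widetilde N(\dif s,\dif z).
\end{align*}
Raising to the $p$-th power, taking $\mE\sup_{t\le T}$ and using the elementary inequality for $|a_1+a_2+a_3|^p$, it suffices to bound the resulting four contributions (splitting the time-integral term into its $b\cdot\nabla\Phi_\eps$ and $\sL_\sigma\Phi_\eps$ pieces); by Jensen's inequality I may assume $p\ge2$.

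The first and the martingale terms are routine. For the first, $|\Phi_\eps(t,X_t^\eps)|\le\|\Phi_\eps(t,\cdot)\|_\infty\lesssim\|\Phi_\eps(t,\cdot)\|_{\bC^\kappa}$ since $\kappa>0$. For the martingale term, by the Burkholder--Davis--Gundy inequality for purely discontinuous martingales (as in the proof of Lemma~\ref{KryEq}) together with $|\Phi_\eps(s,x+w)-\Phi_\eps(s,x)|\lesssim\|\Phi_\eps(s,\cdot)\|_{\bC^{\kappa'}}(1\wedge|w|^{\kappa'})$ and the ellipticity of $\sigma$, one reduces to $\big(\int_0^T\|\Phi_\eps(s,\cdot)\|_{\bC^{\kappa'}}^2\dif s\big)^{p/2}\big(\int(1\wedge|z|^{\kappa'})^2\nu(\dif z)\big)^{p/2}+\int_0^T\|\Phi_\eps(s,\cdot)\|_{\bC^{\kappa'}}^p\dif s\int(1\wedge|z|^{\kappa'})^p\nu(\dif z)$; choosing $\kappa'\in(\alpha/2,1]\cap(0,\kappa]$ (non-empty because $\kappa>\alpha/2$) makes both $\nu$-integrals finite, so this term is bounded deterministically by $C\sup_{s\le T}\|\Phi_\eps(s,\cdot)\|_{\bC^\kappa}^p$.

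The two remaining time-integral terms carry the real content, and I would handle them with Lemma~\ref{KryEq} applied to $X^\eps$, whose drift $b(\cdot/\eps,\cdot)$ has $L^\infty_T(\bC^\beta)$-norm bounded by $\|b\|_{L^\infty(\R_+;\bC^\beta)}$ uniformly in $\eps$ (hence the resulting constant is $\eps$-free). What must be checked is that $\sL_{\sigma(s/\eps,\cdot)}\Phi_\eps(s,\cdot)$ and $b(s/\eps,\cdot)\cdot\nabla\Phi_\eps(s,\cdot)$ lie, uniformly in $s\in[0,T]$ and $\eps$, in $\bC^{-\delta}$ for some $\delta$ admissible in Lemma~\ref{KryEq}, i.e. $\delta\in(0,\tfrac\alpha2\wedge(\alpha+\beta-1))\cap[-\beta,\infty)$, with $\bC^{-\delta}$-norm $\lesssim\|\Phi_\eps(s,\cdot)\|_{\bC^\kappa}$. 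By Lemma~\ref{lem0728}, $\|\sL_{\sigma(s/\eps,\cdot)}\Phi_\eps(s,\cdot)\|_{\bC^{-\delta}}\lesssim\|\Phi_\eps(s,\cdot)\|_{\bC^{\alpha-\delta}}$, which is $\lesssim\|\Phi_\eps(s,\cdot)\|_{\bC^\kappa}$ as soon as $\delta\ge\alpha-\kappa$; and by the product estimates of Lemma~\ref{0116Lem02} (using $\beta+\kappa>1$, which holds because $\beta>1-\tfrac\alpha2$ and $\kappa>\tfrac\alpha2$, and which guarantees the product is well defined) one gets $\|b(s/\eps,\cdot)\cdot\nabla\Phi_\eps(s,\cdot)\|_{\bC^{-\delta}}\lesssim\|b\|_{L^\infty(\R_+;\bC^\beta)}\|\Phi_\eps(s,\cdot)\|_{\bC^\kappa}$ as soon as $\delta\ge(1-\kappa)\vee0$. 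Thus it suffices that the interval $\big(((\alpha\vee1)-\kappa)\vee0,\ \tfrac\alpha2\wedge(\alpha+\beta-1)\big)$ be non-empty.

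The hard part — and the precise place where the hypotheses $\kappa>(1-\tfrac\alpha2)\vee\tfrac\alpha2$ and $(\alpha,\beta)\in\cA_1$ enter — is exactly this non-emptiness. Now $\kappa>\tfrac\alpha2$ gives $\alpha-\kappa<\tfrac\alpha2$ (and, since $\alpha<2$, keeps the relevant negative exponents in $(-1,0]$ so that Lemma~\ref{lem0728} applies), while $\kappa>1-\tfrac\alpha2$ gives $1-\kappa<\tfrac\alpha2$; moreover $\beta>1-\tfrac\alpha2$ forces $1-\beta<\tfrac\alpha2<\kappa$ and $2-\alpha-\beta<1-\tfrac\alpha2<\kappa$, so both $(\alpha\vee1)-\kappa<\tfrac\alpha2$ and $(\alpha\vee1)-\kappa<\alpha+\beta-1$. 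Also $\beta>0$ in $\cA_1$, whence $-\beta<0<\delta$. One may therefore pick $\delta$ just above $((\alpha\vee1)-\kappa)\vee0$ in the admissible range, apply Lemma~\ref{KryEq} to the two time-integral terms, and combine with the bounds on the other two contributions to obtain
\begin{equation*}
\mE\Big(\sup_{t\in[0,T]}\Big|\int_0^t\phi\big(\tfrac{s}{\eps},X_s^\eps\big)\dif s\Big|^p\Big)
\lesssim_C\sup_{t\in[0,T]}\|\Phi_\eps(t,\cdot)\|_{\bC^\kappa}^p
=\eps^p\sup_{t\in[0,T]}\Big\|\int_0^{t/\eps}\phi(s,\cdot)\dif s\Big\|_{\bC^\kappa}^p .
\end{equation*}
The only further point requiring care is the justification of It\^o's formula, since $\Phi_\eps$ is merely absolutely continuous (not $C^1$) in time; this is covered by the standard time-dependent It\^o formula for c\`adl\`ag semimartingales, the spatial $\bC^2$-regularity of $\Phi_\eps$ being more than enough because $\alpha<2$.
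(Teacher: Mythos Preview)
Your proof is correct and follows essentially the same approach as the paper: define $\Psi=\int_0^t\phi$, apply It\^o's formula to the rescaled $\Phi_\eps$, and control the four resulting terms via the Krylov-type estimate (Lemma~\ref{KryEq}) and BDG. The only cosmetic differences are that the paper pulls out the factor $\eps$ at the end rather than absorbing it into $\Phi_\eps$, and treats the range $1\le p<2$ separately via Lenglart's inequality rather than reducing to $p\ge2$ by Jensen as you do.
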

\begin{proof}
Define
\begin{align*}
\psi(t,x):=\int_0^t \phi(s,x)\dif s,\quad \forall (t,x)\in[0,T]\times\R^d.
\end{align*}
It follows from It\^o's formula that
\begin{align}\label{0521:01}
\psi_\eps(t,X^\eps_t)=
\int_0^t \left[\left(
\p_s\psi_\eps+b_\eps\cdot\nabla\psi_\eps
+\sL_{\sigma_\eps}\psi_\eps\right)(s,\cdot)(X^\eps_s)\right]\dif s+M_t^\eps,
\end{align}
where
$$
M_t^\eps:=\int_0^t\int_{\mR^d}\left(\psi_\eps(s,X^\eps_{s-}+\sigma_\eps(s,X_{s-}^\eps)z)
-\psi_{\eps}(s,X^\eps_{s-})\right)\tilde{N}(\dif s,\dif z).
$$
Note that
$$\p_s\psi_\eps(s,x)=\frac{1}{\eps}\phi_\eps(s,x),~\forall(s,x)\in[0,T]\times\R^d.$$
Then in view of \eqref{0521:01}, we have
\begin{align*}
\int_0^t\phi\left(\frac{s}{\eps},X_s^\eps\right)\dif s
&
=\int_0^t\phi_\eps(s,X_s^\eps)\dif s
=\eps\int_0^t\p_s\psi_\eps(s,X_s^\eps)\dif s\\
&
=\eps\left(\psi_\eps(t,X_t^\eps)-M_t^\eps-\int_0^t(b_\eps\cdot\nabla\psi_\eps)(s,X_s^\eps)\dif s
-\int_0^t\sL_{\sigma_\eps}\psi_\eps(s,X_s^\eps)\dif s\right),
\end{align*}
which implies that
\begin{align}\label{0713:001}
&
\mE\left(\sup_{t\in[0,T]}\left|\int_0^t\phi\left(\frac{s}{\eps},X_s^\eps\right)\dif s\right|^p\right)\nonumber\\
&
\leq\eps^p3^{p-1}\left[\mE\left(\sup_{t\in[0,T]}|\psi_\eps(t,X^\eps_t)|^p\right)
+\mE\left(\sup_{t\in[0,T]}|M_t^\eps|^p\right)+\sI\right]\\
&
\lesssim \eps^p\left[\|\psi_\eps\|_{L_T^\infty}^p
+\mE\left(\sup_{t\in[0,T]}|M_t^\eps|^p\right)+\sI\right],\nonumber
\end{align}
where
\begin{equation*}
\sI:=\mE\left(\sup_{t\in[0,T]}\left|\int_0^t(b_\eps\cdot\nabla\psi_\eps)(s,X_s^\eps)\dif s\right|^p\right)
+\mE\left(\sup_{t\in[0,T]}\left|\int_0^t\sL_{\sigma_\eps}\psi_\eps(s,X^\eps_s)\dif s\right|^p\right).
\end{equation*}
If $p\geq2$, then by the Burkholder-Davis-Gundy inequality one sees that
for any $\iota>0$
\begin{align}\label{0713:02}
\mE\left(\sup_{t\in[0,T]}|M_t^\eps|^p\right)
&
\lesssim \mE\left(\int_0^T\int_{\mR^d}\left|\psi_\eps(s,X^\eps_s+\sigma_\eps(s,X_s^\eps)z)
-\psi_\eps(s,X^\eps_s)\right|^2\nu(\dif z)\dif s\right)^{\frac{p}{2}}\nonumber\\
&\quad
+\mE\int_0^T\int_{\mR^d}\left|\psi_\eps(s,X^\eps_s+z)
-\psi_\eps(s,X^\eps_s)\right|^p
\nu(\dif z)\dif s\nonumber\\
&
\lesssim \left(\int_0^T\int_{\mR^d}
\|\psi_\eps(s)\|_\infty^2\wedge(|z|^{\alpha+\iota}
\|\psi_\eps(s)\|_{\bC^{(\alpha+\iota)/2}}^2)\nu(\dif z)\dif s\right)^{\frac{p}{2}}\\\nonumber
&\quad
+\int_0^T\int_{\mR^d}\left(
\|\psi_\eps(s)\|_\infty^p\wedge(|z|^{(\alpha+\iota)p/2}
\|\psi_\eps(s)\|_{\bC^{(\alpha+\iota)/2}}^p)\right)
\nu(\dif z)\dif s\\\nonumber
&
\lesssim
\|\psi_\eps\|_{L^\infty_T(\bC^{(\alpha+\iota)/2})}^p\left(
\left(\int_{\mR^d}(1\wedge|z|^{\alpha+\iota})\nu(\dif z)\right)^{\frac{p}{2}}
+\int_{\R^d}(1\wedge|z|^{\frac{(\alpha+\iota)p}{2}})
\nu(\dif z)\right).\nonumber
\end{align}
Now consider the case where $1\leq p<2$. Let
\begin{equation*}
Z_t:=\left|\int_0^t\int_{\mR^d}\left(\psi_\eps(s,X^\eps_{s-}+\sigma_\eps(s,X_{s-}^\eps)z)
-\psi_{\eps}(s,X^\eps_{s-})\right)\tilde{N}(\dif s,\dif z)\right|^2.
\end{equation*}
By Doob's maximal inequality and It\^o's isometry, $Z_t$ is  dominated by
\begin{equation*}
A_t:=\int_0^t\int_{\mR^d}\left|\psi_\eps(s,X^\eps_{s}+\sigma_\eps(s,X_{s}^\eps)z)
-\psi_{\eps}(s,X^\eps_{s})\right|^2\nu(\dif z)\dif s.
\end{equation*}
Hence it follows from Lemma \ref{Leneq} that for any $1\leq p<2$ and $\iota>0$,
\begin{align}\label{0713:03}
\mE\left(\sup_{t\in[0,T]}|M_t^\eps|^p\right)
&
=\mE\left(\sup_{t\in[0,T]}|Z_t|^{\frac{p}{2}}\right)\nonumber\\
&
\lesssim\mE\left(\int_0^T\int_{\mR^d}\left|\psi_\eps(s,X^\eps_s+\sigma_\eps(s,X_s^\eps)z)
-\psi_\eps(s,X^\eps_s)\right|^2\nu(\dif z)\dif s\right)^{\frac{p}{2}}\\
&
\lesssim\|\psi_\eps\|_{L^\infty_T(\bC^{(\alpha+\iota)/2})}^p
\left(\int_{\mR^d}(1\wedge|z|^{\alpha+\iota})\nu(\dif z)\right)^{\frac{p}{2}}.\nonumber
\end{align}
Finally, in view of the Krylov-type estimate \eqref{B03},  Lemma \ref{0116Lem02}
and Lemma \ref{lem0728}, we obtain that
for any $\delta\in(0,\frac{\alpha}{2}\wedge\beta)$ and
$\iota\in(0,2-\alpha)$,
\begin{align*}
\sI
&
=\mE\left(\sup_{t\in[0,T]}\left|\int_0^tb_\eps\cdot\nabla\psi_\eps(s,X_s^\eps)\dif s\right|^p\right)
+\mE\left(\sup_{t\in[0,T]}\left|\int_0^t\sL_{\sigma_\eps}\psi_\eps(s,X^\eps_s)\dif s\right|^p\right)\\
&
\lesssim\|b_\eps\cdot\nabla\psi_\eps\|_{L_T^\infty(\bC^{-\delta})}^p
+\|\sL_{\sigma_\eps}\psi_\eps\|_{L^\infty_T(\bC^{-(\alpha-\iota)/2})}^p\\
&
\lesssim \|b\|_{L^\infty(\R_+;\bC^{\beta})}^p\|\nabla\psi_\eps\|_{L_T^\infty(\bC^{-\delta})}^p
+\|\psi_\eps\|_{L_T^\infty(\bC^{(\alpha+\iota)/2})}\\
&
\lesssim
\|\psi_\eps\|_{L^\infty_T(\bC^{(-\delta+1)\vee[(\alpha+\iota)/2]})}^p
\end{align*}
which along with \eqref{0713:001}, \eqref{0713:02} and \eqref{0713:03}
completes the proof.
\end{proof}

Now we can prove Theorem \ref{Th1}.
\begin{proof}[Proof of Theorem \ref{Th1}]
Consider the following elliptic equation:
\begin{align}\label{Zvoneq}
\sL_{\bar{\sigma}} u-\lambda u+\bar{b}\cdot \nabla u+\bar{b}=0.
\end{align}
By Lemma \ref{ElLem24}, for $\lambda>0$ large enough, there exists
a unique solution $u$ to \eqref{Zvoneq} so that
for  $\delta\in(0,\beta-(1-\tfrac{\alpha}{2}))$,
\begin{equation}\label{0715:02}
\|\nabla u\|_\infty\leq \tfrac{1}{2}\ \ \text{and} \ \ \|u\|_{L^\infty_T(\bC^{1+\frac{\alpha}{2}+\delta})}
\lesssim\|u\|_{L^\infty_T(\bC^{\alpha+\beta})}<\infty.
\end{equation}
Define $\Phi(x):=x+u(x)$. Then $x\mapsto\Phi(x)$ is a $C^1$-diffeomorphism with
\begin{equation}\label{0403:04}
\|\nabla\Phi\|_\infty+\|\nabla\Phi^{-1}\|_\infty\leq4.
\end{equation}
It follows from \eqref{Main01}, It\^o's formula and \eqref{Zvoneq} that
\begin{align*}
u(X_t^\eps)=
&
u(x)
+\int_0^t\left(\sL_{\sigma_\eps(s,\cdot)}u(X_s^\eps)+b_\eps(s,X_s^\eps)\cdot\nabla u(X_s^\eps)\right)\dif s\\
&
+\int_0^t\int_{\R^d}\left(u(X_{s-}^\eps+\sigma_\eps(s,X_{s-}^\eps)z)-u(X_{s-}^\eps)\right)
\widetilde{N}(\dif s,\dif z)\\
=&
u(x)+\int_0^t\left(\sL_{\sigma_\eps(s,\cdot)}u(X_s^\eps)-\sL_{\bar{\sigma}}u(X_s^\eps)\right)\dif s
\\
&+\int_0^t\left(b_\eps(s,X_s^\eps)-\bar{b}(X_s^\eps)\right)
\cdot\nabla u(X_s^\eps)\dif s+\int_0^t(\lambda u-\bar b)(X_s^\eps)\dif s
\\
&
+\int_0^t\int_{\R^d}\left(u(X_{s-}^\eps+\sigma_\eps(s,X_{s-}^\eps)z)-u(X_{s-}^\eps)\right)
\widetilde{N}(\dif s,\dif z)
\end{align*}
and
\begin{align*}
u(\bar{X}_t)=
&
u(x)+\int_0^t(\lambda u-\bar b)(\bar{X}_s)\dif s
+\int_0^t\int_{\R^d}\left(u(\bar{X}_{s-}+\bar{\sigma}(\bar{X}_{s-})z)-u(\bar{X}_{s-})\right)
\widetilde{N}(\dif s,\dif z).
\end{align*}
Moreover, we have
$$
X^\eps_t-\bar X_t=\int_0^t\left(b_\eps(s,X_s^\eps)-\bar{b}(\bar X_s)\right)\dif s
+\int_0^t\int_{\R^d}(\sigma_\eps(s,X_{s-}^\eps)-\bar\sigma(\bar X_{s-}))z
\widetilde{N}(\dif s,\dif z).
$$
Let
\begin{align*}
Y_t^\eps:=\Phi(X_t^\eps),\ \  Y_t:=\Phi(\bar{X}_t).
\end{align*}
Combining the above calculations, we get
\begin{align}\label{0713:01}
Y_t^\eps-Y_t=\lambda \int_0^t\left(u(X_s^\eps)-u(\bar{X}_s)\right)\dif s+\sJ_1(t)+\sJ_2(t),
\end{align}
where
\begin{align*}
\sJ_1(t):=\int_0^t\left(b_{\eps}(s,X_s^\eps)-\bar{b}(X_s^\eps)\right)\cdot\nabla\Phi(X_s^\eps)\dif s
\end{align*}
and
\begin{align*}
\sJ_2(t)
&
:=\int_0^t\left(\sL_{\sigma_{\eps}(s,\cdot)}u(X_s^\eps)-\sL_{\bar{\sigma}}u(X_s^\eps)\right)\dif s
+\int_0^t\int_{\R^d}(\sigma_\eps(s,X_{s-}^\eps)-\bar\sigma(\bar X_{s-}))z
\widetilde{N}(\dif s,\dif z)\\
&\quad
+\int_0^t\int_{\R^d}\left(u(X_{s-}^\eps+\sigma_\eps(s,X_{s-}^\eps)z)-u(X_{s-}^\eps)
-u(\bar{X}_{s-}+\bar{\sigma}(\bar{X}_{s-})z)+u(\bar{X}_{s-})\right)
\widetilde{N}(\dif s,\dif z).
\end{align*}
Let $p\geq1$. By \eqref{0403:04}, \eqref{0715:02} and \eqref{0713:01}, we have
\begin{align}\label{Zvoneq05}
\mE\left(\sup_{t\in[0,T]}|X_t^\eps-\bar{X}_t|^p\right)
&
\leq 4^p
\mE\left(\sup_{t\in[0,T]}|Y_t^\eps-Y_t|^p\right)\\
&
\lesssim\int_0^T\mE\left(\sup_{s\in[0,t]}\left|X_s^\eps-\bar{X}_s\right|^p\right)\dif t
+\sum_{i=1}^2\mE\left(\sup_{t\in[0,T]}|\sJ_i(t)|^p\right).\nonumber
\end{align}

We first treat the term $\sI_1$.
For simplicity of notations, we write
$$
B(s,x):=b(s,x)\cdot\nabla\Phi(x),\ \bar{B}(x):=\bar{b}(x)\cdot\nabla\Phi(x),\ B_n:=B*\rho_n.
$$
For any $p\geq1$, we have
\begin{align}\label{0403:05}
\mE\left(\sup_{t\in[0,T]}|\sJ_1(t)|^p\right)
&
=\mE\left[\sup_{t\in[0,T]}\left|\int_0^t\left(b_{\eps}(s,X^\eps_s)-\bar{b}(X^\eps_s)\right)
\cdot\nabla\Phi(X_s^\eps)\dif s\right|^p\right]\nonumber\\
&
\lesssim \mE\left[\sup_{t\in[0,T]}\left|\int_0^t\left(B\left(\frac{s}{\eps},X^\eps_s\right)
-B_n\left(\frac{s}{\eps},X^\eps_s\right)\right)\dif s\right|^p\right]\nonumber\\
&\quad
+\mE\left[\sup_{t\in[0,T]}\left|\int_0^t\left(B_n\left(\frac{s}{\eps},X^\eps_s\right)
-\bar{B}_n(X^\eps_s)\right)\dif s\right|^p\right]\nonumber\\
&\quad
+\mE\left[\sup_{t\in[0,T]}\left|\int_0^t\left(\bar{B}_n(X^\eps_s)-\bar{B}(X^\eps_s)\right)
\dif s\right|^p\right]\no\\
&=:\sJ_{11}+\sJ_{12}+\sJ_{13}.
\end{align}
Then in view of Lemma \ref{0116Lem02},
\eqref{B03} and \eqref{UConAeq}, we have for any $\delta\in[0,\alpha/2)$,
\begin{align}\label{SI01}
\sJ_{11}+\sJ_{13}
&
\lesssim \|B-B_n\|^p_{L^\infty(\R_+;\bC^{-\delta})}
+\|\bar{B}-\bar{B}_n\|^p_{L^\infty(\R_+;\bC^{-\delta})}\\\nonumber
&
\lesssim n^{-p[(\delta+\beta\wedge(\alpha+\beta-1))\wedge1]}
\|b\|_{L^\infty(\mR_+;\bC^\beta)}\|u\|_{L^\infty_T\bC^{\alpha+\beta}} \lesssim n^{-p(\alpha\wedge1)}.
\end{align}
Thanks to Lemma \ref{0116Lem},
we obtain that for any $\kappa>(1-\alpha/2)\vee\alpha/2$,
\begin{align}\label{0306:04}
\sJ_{12}
&
\lesssim \eps^p\sup_{t\in[0,T]}\left\|\int_0^{\frac{t}{\eps}}
\left(B_n(s,\cdot)-\bar{B}_n(\cdot)\right)\dif s\right\|_{\bC^\kappa}^p\nonumber\\
&
\lesssim \eps^p\sup_{t\in[0,T]}\left\|\int_0^{\frac{t}{\eps}}
\left(B_n(s,\cdot)-\bar{B}_n(\cdot)\right)\dif s\right\|_{\bC^{\kappa\vee[\gamma\wedge(\alpha+\beta-1)]}}^p\\\nonumber
&
\lesssim n^{\tilde{\kappa}}\eps^p\sup_{t\in[0,T]}\left\|\int_0^{\frac{t}{\eps}}
\left(b(s,\cdot)-\bar b(\cdot)\right)\cdot\nabla\Phi(\cdot)\dif s\right\|_{\bC^{\gamma\wedge(\alpha+\beta-1)}}^p\\\nonumber
&
\lesssim n^{\tilde{\kappa}}\ell_1^p\left(\tfrac{T}{\eps}\right),
\end{align}
where $\tilde{\kappa}:=p\left(\kappa-[\gamma\wedge(\alpha+\beta-1)]\right)\vee0
=p[(\kappa-\gamma)\vee(\kappa-\alpha-\beta+1)]\vee0$.
Then \eqref{SI01} and \eqref{0306:04} yield that for any $p\geq1$,
\begin{equation}\label{0403:06}
\mE\left(\sup_{t\in[0,T]}\left|\sJ_1(t)\right|^p\right)
\lesssim n^{-p(\alpha\wedge1)}+n^{\tilde{\kappa}}\ell_1^p\left(\tfrac{T}{\eps}\right).
\end{equation}

Next we consider the term $\sI_2$ for $\alpha\in(0,1]$ and $\alpha\in(1,2)$ seperately.

{\bf (Case 1: $\alpha\in(0,1]$)}
In this case, $\sigma_\eps(s,x)=\bar{\sigma}(x)=c\mI$.
Without loss of generality we assume that $c=1$. By definition we can write
\begin{align*}
\sJ_2(t)
&
=\int_0^t\int_{\R^d}\left(\delta_zu(X_{s-}^\eps)-\delta_zu(\bar{X}_{s-})\right)
\widetilde{N}(\dif s,\dif z),
\end{align*}
where
$$
\delta_zu(x):=u(x+z)-u(x).
$$
For any $x_1,x_2\in\R^d$, by \eqref{0715:02} we have
\begin{align}\label{0715:03}
\left|\delta_zu(x_1)-\delta_zu(x_2)\right|
&
=\left|(x_1-x_2)\cdot\int_0^1\delta_z\nabla u(\theta x_1+(1-\theta) x_2)\dif\theta\right|\no\\
&
\leq |x_1-x_2|\left(\|\nabla u\|_\infty\wedge (|z|^{\frac{\alpha}{2}+\delta}
\|\nabla u\|_{\bC^{\frac{\alpha}{2}+\delta}})\right)\\
&
\lesssim |x_1-x_2|\left(1\wedge|z|^{\frac{\alpha}{2}+\delta}\right).\no
\end{align}
Therefore, by Lemma \ref{Leneq} and \eqref{0715:03}, one sees that for any $1\leq p<2$,
\begin{align}\label{0714:02}
\mE\left(\sup_{t\in[0,T]}|\sJ_2(t)|^p\right)
&
\lesssim \mE\left(\int_0^T\int_{\mR^d}|\delta_zu(X_s^\eps)
-\delta_zu(\bar{X}_s)|^2\nu(\dif z)\dif s\right)^{\frac{p}{2}}\no\\
&
\lesssim \mE\left(\int_0^T|X_s^\eps-\bar{X}_s|^2\int_{\mR^d}1\wedge|z|^{\alpha+2\delta}\nu(\dif z)\dif s\right)^{\frac{p}{2}}\no\\
&
\lesssim \mE\left(\int_0^T|X_s^\eps-\bar{X}_s|^2\dif s\right)^{\frac{p}{2}}
\leq  \mE\left(\sup_{s\in[0,T]}|X_s^\eps-\bar{X}_s|\int_0^T|X_s^\eps-\bar{X}_s|\dif s\right)^{\frac{p}{2}}\no\\
&
\le\frac12\mE\left(\sup_{t\in[0,T]}|X_t^\eps-\bar{X}_t|^p\right)
+C\int_0^T\mE\left(\sup_{t\in[0,s]}|X_t^\eps-\bar{X}_t|^p\right)\dif s.
\end{align}
If $p\geq2$, then it follows from the Burkholder-Davis-Gundy inequality
and \eqref{0715:03} that
\begin{align}\label{0715:04}
\mE\left(\sup_{t\in[0,T]}|\sJ_2(t)|^p\right)
&
\lesssim \mE\left(\int_0^T\int_{\mR^d}|\delta_zu(X_s^\eps)
-\delta_zu(\bar{X}_s)|^2\nu(\dif z)\dif s\right)^{\frac{p}{2}}\no\\
&\quad
+\mE\int_0^T\int_{\R^d}\left|\delta_zu(X_s^\eps)-\delta_zu(\bar{X}_s)\right|^p\nu(\dif z)\dif s\no\\
&
\lesssim \mE\left(\int_0^T|X_s^\eps-\bar{X}_s|^2\int_{\mR^d}
1\wedge |z|^{\alpha+2\delta}\nu(\dif z)\dif s\right)^{\frac{p}{2}}\\
&\quad
+\mE\int_0^T |X_s^\eps-\bar{X}_s|^p\int_{\R^d}1\wedge |z|^{\frac{p\alpha}{2}+p\delta}
\nu(\dif z)\dif s\no\\
&
\le C\int_0^T\mE\left(\sup_{t\in[0,s]}|X_t^\eps-\bar{X}_t|^p\right)\dif s.\no
\end{align}

{\bf (Case 2: $\alpha\in(1,2)$)} In this case, we can write
\begin{align*}
\sJ_2(t)
&
=\int_0^t\left(\sL_{\sigma_{\eps}(s,\cdot)}u(X_s^\eps)-\sL_{\bar{\sigma}}u(X_s^\eps)\right)\dif s
+\int_0^t\int_{|z|>1}\left(\sigma_\eps(s,X_{s-}^\eps)z-\bar{\sigma}(\bar{X}_{s-})z\right)
N(\dif s,\dif z)\\
&\quad
+\int_0^t\int_{|z|\leq1}\left(\sigma_\eps(s,X_{s-}^\eps)z-\bar{\sigma}(\bar{X}_{s-})z\right)
\widetilde{N}(\dif s,\dif z)\\
&\quad
+\int_0^t\int_{\R^d}\left(u(X_{s-}^\eps+\sigma_{\eps}(s,X_{s-}^\eps)z)-u(X_{s-}^\eps)
-u(\bar{X}_{s-}+\bar{\sigma}(\bar{X}_{s-})z)+u(\bar{X}_{s-})\right)\widetilde{N}(\dif s,\dif z)\\
&
=:\sJ_{21}(t)+\sJ_{22}(t)+\sJ_{23}(t)+\sJ_{24}(t),
\end{align*}
which implies that
\begin{equation}\label{0714:01}
\mE\left(\sup_{t\in[0,T]}|\sJ_{2}(t)|^p\right)
\leq 4^{p-1}\sum_{i=1}^4\mE\left(\sup_{t\in[0,T]}|\sJ_{2i}(t)|^p\right).
\end{equation}
By the definitions of $\sL_{\sigma_\eps}$ and $\sL_{\bar{\sigma}}$, we obtain
\begin{align}\label{0114:01}
\sJ_{21}(t)
&
=\int_0^t\left(\sL_{\sigma_{\eps}(s,\cdot)}u(X_s^\eps)-\sL_{\bar{\sigma}}u(X_s^\eps)\right)\dif s\nonumber\\
&
=\int_0^t\int_{|z|>1}\Gamma_1(z,s)\nu(\dif z)\dif s
+\int_0^t\int_{|z|\leq1}\Gamma_2(z,s)\nu(\dif z)\dif s,
\end{align}
where
\begin{align*}
\Gamma_1(z,s)&:=u(X_s^\eps+\sigma_{\eps}(s,X_s^\eps)z)
-u(X_s^\eps+\bar{\sigma}(X_s^\eps)z)\\
\Gamma_2(z,s)&:=\Gamma_1(z,s)-z(\sigma_{\eps}(s,X_s^\eps)
-\bar{\sigma}(X_s^\eps))\cdot\nabla u(X_s^\eps).
\end{align*}
Since $\alpha\in(1,2)$, by Taylor's expansion, we have
\begin{align}\label{0728:03}
\left|\int_0^t\int_{|z|>1}\Gamma_1(z,s)\nu(\dif z)\dif s\right|
&
\lesssim \|\nabla u\|_\infty\int_0^t|\sigma_{\eps}(s,X_s^\eps)-\bar{\sigma}(\bar{X}_s^\eps)|\dif s\int_{|z|>1}|z|\nu(\dif z),
\end{align}
and
\begin{align}\label{0114:02}
\left|\int_0^t\int_{|z|\leq1}\Gamma_2(z,s)\nu(\dif z)\dif s\right|
&
\lesssim \|u\|_{L^\infty_T(\bC^{\alpha+\beta})}\int_0^t|\sigma_{\eps}(s,X_s^\eps)-\bar{\sigma}(X_s^\eps)|\dif s\int_{|z|\leq1}|z|^{\alpha+\beta}\nu(\dif z).
\end{align}
And we also have
which along with  \eqref{0114:01}, \eqref{0728:03}, \eqref{0114:02}, Lemma \ref{0116Lem}
and {\bf(A$_\sigma$)} implies that for any $p\geq1$,
\begin{align}\label{0114:04}
\mE\left(\sup_{t\in[0,T]}|\sJ_{21}(t)|^p\right)=
&
\mE\left(\sup_{t\in[0,T]}\left|\int_0^t\left(\sL_{\sigma_{\eps}(s,\cdot)}u(X_s^\eps)
-\sL_{\bar{\sigma}}u(X_s^\eps)\right)\dif s\right|^p\right)\nonumber\\\nonumber
&
\lesssim \mE\left(\int_0^T\left|\sigma_{\eps}(s,X_s^\eps)-\bar{\sigma}(X_s^\eps)\right|\dif s
\right)^p\\
&
\lesssim \mE\left(\int_0^T\left|\sigma_{\eps}(s,X_s^\eps)-\bar{\sigma}(X_s^\eps)\right|^2\dif s
\right)^{\frac{p}{2}}\\\nonumber
&
\lesssim \left(\eps\sup_{t\in[0,T]}\left\|\int_0^{\frac{t}{\eps}}
\left|\sigma(s,\cdot)-\bar{\sigma}(\cdot)\right|^{2}\dif s\right\|_{\bC^1}
\right)^{\frac{p}{2}}\\\nonumber
&
\lesssim \left(\ell_2\left(\tfrac{T}{\eps}\right)\right)^{\frac{p}{2}}.
\end{align}

By \cite[Lemma 2.3]{SZ15}, Lemma \ref{0116Lem} and {\bf(A$_\sigma$)},
one sees that for any $1\leq p<\alpha$,
\begin{align}\label{0402:01}
\mE\left(\sup_{t\in[0,T]}|\sJ_{22}(t)|^p\right)
&
=\mE\left(\sup_{t\in[0,T]}\left|\int_0^t\int_{|z|>1}
\left(\sigma_{\eps}(s,X_{s-}^\eps)-\bar{\sigma}(\bar{X}_{s-})\right)z
N(\dif s,\dif z)\right|^p\right)\nonumber\\\nonumber
&
\lesssim \mE\left(\int_0^T\int_{|z|>1}
\left|\left(\sigma_{\eps}(s,X_s^\eps)
-\bar{\sigma}(\bar{X}_s)\right)z
\right|\nu(\dif z)\dif s\right)^p\\
&\quad
+\mE\int_0^T\int_{|z|>1}
\left|\left(\sigma_{\eps}(s,X_s^\eps)
-\bar{\sigma}(\bar{X}_s)\right)z
\right|^p\nu(\dif z)\dif s\\\nonumber
&
\lesssim \left(\mE\int_0^T\left|\sigma_{\eps}(s,X_s^\eps)-\bar{\sigma}(X_s^\eps)\right|^2\dif s\right)^{\frac{p}{2}}
+\int_0^T\mE\left(\sup_{s\in[0,t]}|X_s^\eps-\bar{X}_s|^p\right)\dif t\\\nonumber
&
\lesssim \left(\ell_2\left(\tfrac{T}{\eps}\right)\right)^{\frac{p}{2}}
+\int_0^T\mE\left(\sup_{s\in[0,t]}|X_s^\eps-\bar{X}_s|^p\right)\dif t.
\end{align}
It follows from Lemma \ref{Leneq},
Young's inequality, Lemma \ref{0116Lem} and {\bf(A$_\sigma$)} that for any $p\in(0,2)$,
\begin{align}\label{0402:02}
\mE\left(\sup_{t\in[0,T]}|\sJ_{23}(t)|^p\right)
&
=\mE\left(\sup_{t\in[0,T]}\left|\int_0^t\int_{|z|\leq1}
\left(\sigma_{\eps}(s,X_{s-}^\eps)-\bar{\sigma}(\bar{X}_{s-})\right)z
\widetilde{N}(\dif s,\dif z)\right|^p\right)\nonumber\\\nonumber
&
\lesssim \mE\left(\int_0^T\int_{|z|\leq1}
\left|\left(\sigma_{\eps}(s,X_s^\eps)
-\bar{\sigma}(\bar{X}_s)\right)z
\right|^2\nu(\dif z)\dif s\right)^{\frac{p}{2}}\\
&
\lesssim \mE\left(\int_0^T\left|\sigma_{\eps}(s,X_s^\eps)
-\bar{\sigma}(X_s^\eps)\right|^2
+|X_s^\eps-\bar{X}_s|^2\dif s\right)^{\frac{p}{2}}\\\nonumber
&
\le \frac{1}{2}\mE\left(\sup_{t\in[0,T]}
|X_s^\eps-\bar{X}_s|^p\right)
+C\int_0^T\mE|X_s^\eps-\bar{X}_s|^p\dif s\\\nonumber
&\quad
+C\mE\left(\int_0^T\left|\sigma_{\eps}(s,X_s^\eps)
-\bar{\sigma}(X_s^\eps)\right|^2\dif s\right)^{\frac{p}{2}}\\\nonumber
&
\le \frac{1}{4}\mE\left(\sup_{t\in[0,T]}
|X_s^\eps-\bar{X}_s|^p\right)
+C\int_0^T\mE\left(\sup_{s\in[0,t]}|X_s^\eps-\bar{X}_s|^p\right)\dif t
+C\left(\ell_2\left(\tfrac{T}{\eps}\right)\right)^{\frac{p}{2}}.
\end{align}
The last term is $\sJ_{24}$. Define
$$u_{\sigma_{\eps}}^z(s,x):=u(x+\sigma_{\eps}(s,x)z), \quad
u_{\bar{\sigma}}^z(x):=u(x+\bar{\sigma}(x)z), \quad \forall (t,x,z)\in\R_+\times\R^{2d}.$$
We note that
\begin{align*}
\nabla_x(u_{\bar\sigma}^z-u)(x)
&
=\nabla u(x+\bar{\sigma}(x)z)(\mI+\nabla\bar{\sigma}(x)z)-\nabla u(x)\\
&
=\delta_{\bar{\sigma}(x)z}\nabla u(x)+\nabla u(x+\bar{\sigma}(x)z)\nabla\bar{\sigma}(x)z,
\end{align*}
which implies that 
\begin{align}\label{0114:05}
&
\quad |u_{\sigma_{\eps}}^z(s,X_s^\eps)-u(X_s^\eps)-u_{\bar{\sigma}}^z(\bar{X}_s)+u(\bar{X}_s)| \no
\\
&
\le\left|u_{\sigma_{\eps}}^z(s,X_s^\eps)-u_{\bar{\sigma}}^z(X_s^\eps)\right|
+\left|\left(u_{\bar{\sigma}}^z(X_s^\eps)-u(X_s^\eps)\right)
-\left(u_{\bar{\sigma}}^z(\bar{X}_s)-u(\bar{X}_s)\right)\right|
\\
&
\lesssim \|\nabla u\|_{\infty}|\sigma_{\eps}(s,X_s^\eps)-\bar{\sigma}(X_s^\eps)||z|
+|X^\eps_s-\bar{X}_s|\left[\|\nabla u\|_\infty\wedge\left(|z|^{\alpha/2+\delta}
\|u\|_{L^\infty_T\bC^{1+\alpha/2+\delta}}\right)\right]\no\\
&\ +|X^\eps_s-\bar{X}_s|\|\nabla u\|_\infty\|\nabla \sigma\|_\infty|z|.\no
\end{align}
In view of \eqref{0114:05}, we obtain that for any $p\in[1,\alpha)$,
\begin{align}\label{0403:07}
&
\mE\left(\sup_{t\in[0,T]}|\sJ_{24}(t)|^p\right)\nonumber\\\nonumber
&
\lesssim \mE\left(\int_0^T\int_{|z|\leq1}
\left(|\sigma_{\eps}(s,X_s^\eps)-\bar{\sigma}(X_s^\eps)|^2
+|X_s^\eps-\bar{X}_s|^2\right)|z|^{\alpha+2\delta}
\nu(\dif z)\dif s\right)^{\frac{p}{2}}\\
&\quad
+\mE\int_0^T\int_{|z|>1}\left(|\sigma_{\eps}(s,X_s^\eps)-\bar{\sigma}(X_s^\eps)|^p
+|X_s^\eps-\bar{X}_s|^{p}\right)|z|^p\nu(\dif z)\dif s\\\nonumber
&
\le\frac12\mE\left(\sup_{t\in[0,T]}|X_t^\eps-\bar{X}_t|^p\right)+C \mE\left(\int_0^T\left|\sigma_{\eps}(s,X_s^\eps)-\bar{\sigma}(X_s^\eps)\right|^2\dif s\right)^{\frac{p}{2}}
+C\mE\int_0^T\left|X_s^\eps-\bar{X}_s\right|^p\dif s\\\nonumber
&
\le\frac14\mE\left(\sup_{t\in[0,T]}|X_t^\eps-\bar{X}_t|^p\right)+C \int_0^T\mE\left(\sup_{s\in[0,t]}\left|X_s^\eps-\bar{X}_s\right|^p\right)\dif t
+C\left(\ell_2\left(\tfrac{T}{\eps}\right)\right)^{\frac{p}{2}}.
\end{align}
Therefore, by \eqref{0714:01}, \eqref{0114:04}, \eqref{0402:01},
\eqref{0402:02} and \eqref{0403:07}, one sees that
\begin{align}\label{0714:03}
\mE\left(\sup_{t\in[0,T]}|\sJ_2(t)|^p\right)
&
\leq \frac{1}{2}\mE\left(\sup_{t\in[0,T]}
|X_s^\eps-\bar{X}_s|^p\right)
+C\left(\ell_2\left(\tfrac{T}{\eps}\right)\right)^{\frac{p}{2}}\\
&\quad
+C\int_0^T\mE\left(\sup_{s\in[0,t]}|X_s^\eps-\bar{X}_s|^p\right)\dif t.\no
\end{align}

Combining \eqref{Zvoneq05}, \eqref{0403:06}, \eqref{0714:02}, \eqref{0715:04},
\eqref{0714:03} and Gronwall's inequality,
we complete the proof by letting $n=\ell_1^{-\frac{p}{p(1\wedge\alpha)+\tilde{\kappa}}}(\frac{T}{\eps})$
for any $1\leq p<\alpha$.
\end{proof}

\subsection{Proof of Theorem \ref{Th2}}\label{WCR}

Let $T>0$, $b\in L_T^\infty(\bC^\beta)$,
$\sigma$ satisfies {\bf(H$_{\sigma}$)},
and $\{L_t^{(\alpha)},t\geq0\}$ is an $\alpha$-stable process
with $(\alpha,\beta)\in\cA$.
Consider the following SDE:
\begin{equation}\label{1203:01}
\dif X_t=b(t,X_t)\dif t+\sigma(t,X_t)\dif L_t^{(\alpha)}.
\end{equation}
and corresponding backward nonlocal PDE for $0\leq t\leq T$:
\begin{equation}\label{1203:02}
  \partial_tu+\sL_\sigma u+b\cdot\nabla u+f=0,\quad u(T,x)=0
\end{equation}
for $f\in L_T^\infty(\bC^\iota)$ with $\iota>0$.
We use $\bD:=D([0,T];\R^d)$ to denote the space of all c\`adl\`ag functions
$\phi:[0,T]\rightarrow\R^d$ with Skorokhod $J_1$-topology.
Then $\bD$ is a Polish space with the usual Borel $\sigma$-algebra $\sB(\bD)$.
The mappings $\pi_t:\bD\rightarrow\R^d,~t\in[0,T]$, defined by
$$
\pi_t(\omega)=\omega_t,\quad \forall \omega\in\bD,
$$
are called coordinate process.
Define $\cD_t:=\sB(D([0,t];\R^d))$ for all $t\in[0,T]$, which is equal to the natural filtration.
Denote by $\cP(\bD)$ the space of probability measures over $\bD$.
Let $\|\cdot\|_{\var}$ be the total variation distance on $\cP(\bD)$ defined by
\begin{equation*}
\|\mP_1-\mP_2\|_{\var}:=\sup_{A\in\sB(\bD)}|\mP_1(A)-\mP_2(A)|
\end{equation*}
for $\mP_1,\mP_2\in\cP(\bD)$. It can be verified that
$\cW_1(\mP_1,\mP_2)\leq \|\mP_1-\mP_2\|_{\var}$.

Now we recall the following definition of martingale solution
in the sense of Either and Kurtz \cite{EK86}.

\begin{definition}\rm\label{MSolution}
Let $\alpha\in(0,2)$ and $T>0$. For $x\in\R^d$, a probability measure $\mP_x\in\cP(\bD)$
is called {\it martingale solution} of SDE \eqref{1203:01} starting at $x$ if
\begin{itemize}
  \item[(i)] $\mP(\pi_0=x)=1$;
  \item[(ii)] For any $f\in L_T^\infty(\bC^\iota)$ with $\iota>0$, the process
  \begin{equation*}
  M_t:=u(t,\pi_t)-u(0,x)+\int_0^tf(r,\pi_r)\dif r
  \end{equation*}
  is a martingale under $\mP_x$ with respect to filtration $(\cD_t)_{t\in[0,T]}$,
  where $u$ is the unique solution of PDE \eqref{1203:02}.
\end{itemize}
The set of all the martingale solutions to \eqref{1203:01}
starting from $x$ is defined by $\cM(x)$.
\end{definition}

Note that Definition \ref{MSolution} is a generalization of the classical notion
of a weak solution. Indeed, it can be shown that $\{X_t^n,t\geq0\}$ is a weak solution to
\begin{equation}\label{0407:01}
\dif X_t^n=b_n(t,X_t^n)\dif t+\sigma(t,X_t^n)\dif L_t^{(\alpha)},
\quad X_0^n=x\in\R^d
\end{equation}
if and only if $\mP^n_x:=\mP\circ\left(X^n(x)\right)^{-1}$ is the martingale
solution of \eqref{0407:01} in the sense of Definition \ref{MSolution}.
Assume that $(\alpha,\beta)\in\cA_2$. Then it follows from \cite{CZZ22}, \cite{LZ22}, \cite[Theorem 1.1]{WH23}
and \cite[Theorem 2.2]{SX23} that
there exists a unique martingale solution $\mP_x\in\cM(x)$ to \eqref{1203:01}.
Moreover, by denoting $\mP(t):=\mP\circ(\pi_t)^{-1}\in \cP(\mR^d)$, based on \cite[Theorem 4.6]{WH23} and \cite[Theorem 4.1]{HRW24}, we have when {\bf $\alpha\in(1,2)$} for any $\beta'\in(-\frac{\alpha-1}{2},\beta)$
\begin{align}\label{0513:01}
\sup_{t\in[0,T]}\sup_{x\in\mR^d}\|\mP_x(t)-\mP^n_x(t)\|_{\var}
\lesssim \|b_n-b\|_{L^\infty(\mR_+;\bC^{\beta'})}
\lesssim n^{-(\beta-\beta')};
\end{align}
when {\bf $\alpha\in(0,1]$},
\begin{align}\label{0513:02}
\sup_{t\in[0,T]}\sup_{x\in\mR^d}\|\mP_x(t)-\mP^n_x(t)\|_{\cW_1}
\lesssim \sup_{t\in\R_+}\|b_n-b\|_{\infty}
\lesssim n^{-\beta}.
\end{align}

Now it is a position to prove Theorem \ref{Th2}.
\begin{proof}[Proof of Theorem \ref{Th2}]
Let $X_t^{n,\eps}$ be the unique strong solution to
\begin{equation*}
\dif X_t^{n,\eps}=b_n\left(\frac{t}{\eps},X_t^{n,\eps}\right)\dif t
+\sigma\left(\frac{t}{\eps},X_t^{n,\eps}\right)\dif L_t^{(\alpha)},
\end{equation*}
and $\bar{X}_t^n$ be the unique strong solution to
\begin{equation*}
\dif \bar{X}_t^{n}=\bar{b}_n(\bar{X}_t^{n})\dif t
+\bar{\sigma}(\bar{X}_t^n)\dif L_t^{(\alpha)}.
\end{equation*}
Then it follows from \eqref{0513:01} and \eqref{0513:02} that
\begin{align}\label{0513:03}
\sup_{\eps\ge0}\sup_{t\in[0,T]}\|\sL(X_t^{n,\eps})-\sL(X_t^{\eps})\|_{\var}\lesssim n^{-\delta}
\end{align}
with some $\delta>0$, where $X_t^{n,0}:=\bar{X}_t^{n}$ and $X_t^{0}:=\bar{X}_t$.

Let $\lambda>0$ and $n\geq1$.
Consider the following elliptic equations:
\begin{equation}\label{0308:01}
\sL_\sigma u_\lambda^n-\lambda u_\lambda^n+\bar{b}_n\cdot\nabla u_\lambda^n+\bar{b}_n=0.
\end{equation}
By Lemma \ref{ElLem24}, one sees that for any $n\geq1$ there exists a unique solution $u_\lambda^n$
to \eqref{0308:01} satisfying that for any $\eta\in[0,\alpha]$,
\begin{equation}
\|u_\lambda^n\|_{\bC^{\eta+\beta}}\lesssim (1+\lambda)^{-\frac{\alpha-\eta}{\alpha}}\|\bar{b}_n\|_{\bC^\beta}
\lesssim (1+\lambda)^{-\frac{\alpha-\eta}{\alpha}}\|\bar{b}\|_{\bC^\beta}.
\end{equation}
Take $\lambda>0$ large enough so that $\Phi_n(x):=x+u_n(x)$ is a $C^1$-diffeomorphism with
\begin{equation*}
\|\nabla\Phi_n\|_\infty+\|\nabla\Phi_n^{-1}\|_\infty\leq4.
\end{equation*}
Let $p\geq1$ when $\alpha\in(0,1]$ and $1\leq p<\alpha$ when $\alpha\in(1,2)$.
Hence it follows from Zvonkin's transform and Lemma \ref{0116Lem} that
\begin{align}\label{1218:05}
\mE\left(\sup_{t\in[0,T]}|X_t^{n,\eps}-\bar{X}_t^n|^p\right)
&
\lesssim_n \int_0^T\mE\left(\sup_{s\in[0,t]}|X_s^{\eps,n}-\bar{X}^n_s|^p\right)\dif t
+\left(\ell_2\left(\tfrac{T}{\eps}\right)\right)^{\frac{p}{2}}
\\\nonumber
&
+\mE\left[\sup_{t\in[0,T]}\left|\int_0^t\left(b_n\left(\frac{s}{\eps},X_s^{n,\eps}\right)
-\bar{b}_n(X_s^{n,\eps})\right)\cdot\nabla\Phi_n(X_s^{n,\eps})\dif s\right|^p\right]\\\nonumber
&
\lesssim_n \int_0^T\mE\left(\sup_{s\in[0,t]}|X_s^{\eps,n}-\bar{X}^n_s|^p\right)\dif t
+n^{(\kappa-\gamma)p}\ell_1\left(\tfrac{T}{\eps}\right)^p
+\left(\ell_2\left(\tfrac{T}{\eps}\right)\right)^{\frac{p}{2}},
\end{align}
where $\kappa$ is as in Lemma \ref{0116Lem}, which implies that for each $n\in\mN$,
\begin{align*}
\lim_{\eps\to0}\sup_{t\in[0,T]}\sup_{x\in\R^d}
\cW_1(\sL(X_s^{\eps,n}),\sL(\bar{X}_s^{n}))=0.
\end{align*}
Thus, in view of \eqref{0513:03}, we have
\begin{align*}
&
\lim_{\eps\to0}\sup_{t\in[0,T]}
\sup_{x\in\R^d}\cW_1\left(\mP_x^\eps(t),\mP_x(t)\right)\\
&
\le \lim_{\eps\to0}\sup_{t\in[0,T]}\sup_{x\in\R^d}
\cW_1(\sL(X_s^{\eps,n}),\sL(\bar{X}_s^{n}))
+\lim_{n\to\infty}\sup_{\eps\ge0}\sup_{t\in[0,T]}
\cW_1(\mP_x^{\eps,n}(s),\mP_x^{\eps}(s))\\
&
\quad
+\lim_{n\to\infty}\sup_{\eps\ge0}\sup_{t\in[0,T]}
\sup_{x\in\R^d}\cW_1(\mP_x^n(s),\mP_x(s))\\
&
\le \lim_{n\to\infty}\sup_{\eps\ge0}\sup_{t\in[0,T]}
\sup_{x\in\R^d}\cW_1(\sL(X_s^{\eps,n}),\sL(X_s^{\eps}))=0
\end{align*}
and complete the proof.
\end{proof}

\end{document}